\newcommand{\sqcupscaled}[2][1]{
  \mathbin{%
    \raisebox{#2}{\scalebox{#1}{$\sqcup$}}%
  }%
}
\DeclareRobustCommand{\LaxSpecThm}{\cite[Theorem~9$'$, p.~363]{Lax}}
\DeclareRobustCommand{\LaxClosedGraphThm}{\cite[Theorem~12, p.~170]{Lax}}
\renewenvironment{proof}[1][Proof]{%
  \par\noindent\textbf{#1:}\,\,\ignorespaces
}{%
  \\ \hfill\qed\par
}
\newenvironment{abstractpage}
  {\cleardoublepage\vspace*{\fill}}
  {\vfill\cleardoublepage}
\renewenvironment{abstract}[1]
  {\bigskip\selectlanguage{#1}%
   \begin{center}\bfseries\abstractname\end{center}}
  {\par\bigskip}
\newtheorem{theorem}{Theorem}[chapter]      
\newtheorem{definition}{Definition}[chapter] 
\newtheorem{lemma}{Lemma}[chapter]        
\newtheorem{corollary}{Corollary}[chapter]
\newtheorem{proposition}{Proposition}[chapter]
\newtheorem{example}{Example}[chapter] 
\author{Marin Matei-Luca}
\newcommand{\restr}[2]{\left.#1\right|_{#2}}
\begin{document}

\cleardoublepage
\let\ps@plain


\restoregeometry
\newgeometry{
    margin=2.5cm
}

\fancypagestyle{main}{
  \fancyhf{}
  \renewcommand\headrulewidth{0pt}
  \fancyhead[C]{}
  \fancyfoot[C]{\thepage}
}

\addtocounter{page}{1}
\setlength{\parindent}{0pt}

\newcommand{\inner}[2]{\left\langle #1, #2 \right\rangle}

\cleardoublepage
\pagestyle{main}
\let\ps@plain\ps@main
\begin{titlepage}
\thispagestyle{empty}
\vspace*{2cm} 

\begin{center}
  {\scshape\LARGE The Henstock-Kurzweil Functional \\ \vspace{0.5cm} Calculus on Self-Adjoint Operators}
  \vspace{6cm} 
  
 {\small
    \textsc{A DISSERTATION SUBMITTED TO THE UNIVERSITY OF MANCHESTER\\
    FOR THE DEGREE OF MASTER OF SCIENCE\\
    IN THE FACULTY OF SCIENCE AND ENGINEERING}
  }

  \vspace{3cm}

  \vfill
  {\normalsize 2025}\\
  {\small\bfseries Marin Matei\hspace{+0.01cm}-Luca\\
  Department of Mathematics}

\end{center}
\end{titlepage}
\setcounter{page}{1}
\begin{abstractpage}
\begin{abstract}{english}
This dissertation focuses on developing a new construction of a functional calculus using Henstock-Kurzweil integration methods. The assignment of a functional calculus will be applied to self-adjoint operators. We will address both the bounded and unbounded cases, examine the advantage of the underlying function space compared to larger spaces, and explore one application of this functional calculus.
\end{abstract}
\end{abstractpage}
\tableofcontents
\vfill 
\listoffigures
\chapter*{Notations and Assumptions}
\section*{Notations}
\vspace{-1cm}
\allowdisplaybreaks
\begin{adjustwidth}{-1.2cm}{0cm}
\begin{align*}{\hspace{-2cm}}
  \mathbb{F}
  &:= \text{Field, either }\mathbb{R}\text{ or }\mathbb{C}.\\
  \widehat{P}_{\gamma}
  &:= \text{Gauge-fine, tagged partition associated to a compact subset of } \mathbb{R}.\\
   V^*
  &:= \text{The dual space of a given vector space } V.\\
  V^{\perp}
  &:= \text{The orthogonal complement with respect to an inner product vector space}.\\
  \overline{Y}^d
  &:= \text{The closure of Y in the topology induced by } d \text{, a metric or a norm.}\\
  B(x,r)
  &:=\{z \in M \mid \|z-x\|_M < r\} \text{ for some metric apace } (M,d) \text{ with its induced norm}
 . \\
  \partial B(x,r)
  &:=\{z \in M \mid \|z - x\|_M =  r\} \text{ for some metric apace } (M,d) \text{ with its induced norm}
 . \\
  E 
  &:= \text{A projection-valued measure defined on some measurable space.}\\
  Bor(X)
  &:= \text{The Borel }\sigma\text{-algebra of some set } X.\\
  \mathrm{Reg}_{\mathbb{F}}([a,b])
  &:= \text{The set of $\mathbb{F}$-valued regulated functions on some set } X.\\
  \mathrm{C}_{\mathbb{F}}(X)
  &:= \text{The set of continuous $\mathbb{F}$-valued functions on some set } X.\\
  \mathrm{C^k}(I;X)
  &:= \text{The set of $X$-valued k-continuously differentiable functions on some set } I.
  \\
  \mathrm{Borel}_{\mathbb{F}}(X)
   &:= \text{The set of $\mathbb{F}$-valued, $(\mathrm{Bor}(X),\mathrm{Bor}(\mathbb{F}))$-measurable functions}.\\
  \mathrm{Borel}_{b,\mathbb{F}}(X)
  &:= \text{The set of $\mathbb{F}$-valued, bounded, $(\mathrm{Bor}(X),\mathrm{Bor}(\mathbb{F}))$-measurable functions}.\\
  (\mathrm{L^p}(X,\Sigma_X, \lambda),\|\cdot\|_{\mathrm{L^p}},\mathbb{F})
  &:= \text{The } p\text{-th Lebesgue space of some measure space } (X,\Sigma_X,\mu). \\
  (\mathrm{L^p}((X,\Sigma_X, \lambda);V))
  &:= \text{The } p\text{-th Lebesgue space of some measure space } (X,\Sigma_X,\mu). \\
   & \hspace{0.8cm}\text{and a Banach space }(V, \|\cdot\|_V, \mathbb{F}).\\
  \mathrm{Cont}_{\mathbb{F}}(f, X)
  &:=\text{The set of continuous points of an } \mathbb{F}\text{-valued function on a set } X.\\
  \mathrm{Disc}_{\mathbb{F}}(f, X)
  &:=\text{The set of discontinuities of an } \mathbb{F}\text{-valued function on a set } X.\\
  \mathscr{L}(H)
  &:=\text{The set of linear operators on a Hilbert space } H.\\
  \mathrm{B(H)}
  &:= \text{The set of bounded, linear operators on a Hilbert space } H.\\
  Proj_{\perp}(H)
  &:=\text{The set of orthogonal projections associated to a Hilbert space } H.\\
  \sigma(A)
  &:= \text{The spectrum of some operator } A.\\
  \sigma_p(A)
  &:= \text{The point spectrum of some operator } A.\\
  \rho(A)
  &:= \text{The resolvent set of some operator } A.\\
  \left(A, \mathrm{D}(A)\right)
  &:= \text{The identification between an unbounded operator } A \text{ and its domain } \mathrm{D}(A).\\
  (X,\Sigma_X)
  &:= \text{A measurable space for some set } X \text{ and a } \sigma\text{-algebra } \Sigma_X.\\
  supp(\mu)
  &:= \text{The support of a measure on some measure space } (X,\Sigma_X, \mu).\\
   |\mu|(X)
  &:=\text{The total variation measure on some measure space } (X,\Sigma_X, \mu).\\
  (V,\langle\cdot,\cdot\rangle_V,\mathbb{F})
  &:= \text{An inner-product $\mathbb{F}$-vector space}.\\
  (V,\|\cdot\|_V,\mathbb{F})
  &:= \text{A normed $\mathbb{F}$-vector space or a unital Banach algebra}.\\
  (H, \mathbb{F})
  &:= \text{A Hilbert space over } \mathbb{F}.\\
  d(x,Y)
  &:= \inf\{\,d(x,y)\mid y\in Y\}, \text{for some metric space } (M,d), \text{ where} \;x\in M,\;Y\subseteq M.\\
  d_{X \times Y}((x_1,y_1),(x_2,y_2))
  &:=\max\{d_X(x_1,x_2),d_Y(y_1,y_2)\} \text{ for some metric spaces } (X,d_x), (Y,d_Y).\\
  \|\cdot\|_{\infty}&:= \text{The supremum norm on some function space}.\\
  \|\cdot\|_{\mathcal{G}}
  &:= \text{The Graph norm on some product space } V \times W.\\
  \mathrm{Ran}(f)
  &:= \text{Range of a function } f.\\
  ess\,ran_A(f)
  &:= \text{The essential range of a function } f \text{associated to some operator }A.\\
  \Re(f), \Im(f)
  &:= \text{The real and imaginary part of a function } f.\\
  f(x^+)\,,\, f(x^-)
  &:= \text{The right and left limits of } f(x) \text{ in some interval } [a,b].\\
  H_c 
  &:= \text{The Heaviside function with impulse at } c.\\
  \Phi^{C}_A, \Phi^{HK}_A, \Phi^{B}_A 
  &:= \text{The continuous, Henstock Kurzweil and Borel Functional Calculus of} \\
  & \hspace{0.8cm}\text{some operator } A.\\
\end{align*}
\end{adjustwidth}

\section*{Assumptions}
\begin{itemize}
    \item Throughout the whole monograph, the dual space of a Hilbert space will have the functionals taken to be bounded.
    \item For brevity, we will refer to bounded, linear operators as operators, unless we mention a change.
    \item We will assume intermediate to advanced knowledge of the following subjects:
    
    Real Analysis, Topology, Measure Theory, Metric Spaces, and Functional Analysis.
    \\
    \item Given a measure space $(X,\Sigma_X,\mu)$, when we mention the total variation of $\mu$, we will always mean:
    $$|\mu|(X) : = \sup\left\{\sum\limits_{i=1}^n|\mu(A_i)|_{\mathbb{F}} \mid  \left\{A_i\right\}_{i=\overline{1,n}}\subseteq \Sigma_X \text{ \,a partition of }  X,  n \in \mathbb{N}\right\}$$
    \item Given two normed vector spaces $(V, \|\cdot\|_V, \mathbb{F}),\, (W, \|\cdot\|_W, \mathbb{F})$, the graph norm will be defined as follows:
    $$\|(v,w)\|_{\mathcal{G}} : = \|v\|_V + \|w\|_W \quad \text{ for all }  v \in V,\, w\in W$$
    \item The step functions introduced later in Chapters 2 and 3 will have mutually disjoint sets associated with each indicator function.
\end{itemize}

\chapter{Introduction}
Constructing a functional calculus for a self-adjoint operator $T$ is often realised by abstractly defining a spectral integral in relation to a chosen function space. Examples of functional calculi, ordered in ascending order with respect to the size of the corresponding function space, are: the polynomial, holomorphic, continuous, and the Borel functional calculus. 

\bigskip
When $\sigma(T)$ is compact, the first three calculi allow operators to be described as operator limits of sequences of other, usually better-behaved, operators. In this dissertation, we draw inspiration from Ralph Henstock and Jaroslav Kurzweil's approach of generalising the Riemann integral using gauge-fine, tagged partitions, in order to provide a new construction for a functional calculus. With this, we manage somehow accidentally to find out another function space sitting right between the continuous and the Borel functional calculus. Additionally, (if $\sigma(T)$ is compact) the property of having each operator be seen as an operator limit of a convergent sequence of other operators is preserved in this space as well. This is covered in \autoref{ch:henstockkurzweilbounded}.

\bigskip
The Spectral Mapping Theorem is an important theorem relating the spectrum of a symbol $f(T)$ and $\sigma(f(T))$ for some specified function $f$. When $\sigma(T)$ is compact and $f$ is continuous, we get an equality between those two sets:
$$\sigma(f(T)) = f(\sigma(T))$$
For the Borel functional calculus, however, one can only have for certainty the following set inclusion:
$$\sigma(f(T)) \subseteq \overline{f(\sigma(T))}$$
In \autoref{ch: spectralmappingtheorem}, we establish an explicit formula for $\sigma(f(T))$, regardless of whether $\sigma(T)$ is compact or closed. If $\sigma(T)$ is closed, in \autoref{ch: unboundedhk}, we apply a truncating argument, which extends the functional calculus to unbounded self-adjoint operators.

\newpage
These remarks, in and of themselves, suggest that the underlying function space of our functional calculus deserves further analysis: first, as an independent mathematical object (detailed in \autoref{ch: regulatedfunctions}); and second, as a function space in contrast with other spaces (discussed in \autoref{ch: advantageofhk}).

\bigskip
Having the theoretical framework established, in \autoref{ch: applications} we look at an application of our functional calculus in the Theory of Abstract Differential Equations. More specifically, we will use our calculus to represent solutions in a spectral manner and refine one of Amnon Pazy's theorems for our chosen example.
\chapter{Preliminaries}
In this chapter, we will establish the underlying concepts and results that we will use in constructing our functional calculus.
\begin{definition}[Jump Discontinuity]
Given a function $f:\mathbb{R}\to\mathbb{F}$, we say that $f$ has a \textbf{jump discontinuity at $y$} if $f(y_{-}),f(y_{+}) \in \mathbb{F}$, but $f(y_{-}) \neq f(y_{+}).$
\end{definition}
\begin{definition}[Removable Discontinuity]
Given a function $f:\mathbb{R}\to\mathbb{F}$, we say that $f$ has a \textbf{removable discontinuity at $y$} if $f(y_{-}) = f(y_{+}) = L$  in $\mathbb{F}$, but $f(y) \neq L$.
\end{definition}

\begin{definition}[Essential Discontinuity]
Given a function $f:\mathbb{R}\to\mathbb{F}$, we say that $f$ has an \textbf{essential discontinuity at $y$} if either $f(y_{-})$ or $f(y_{+})$ does not exist in $\mathbb{F}$.
\end{definition}
\begin{definition}[Gauge function]
Let $X$ be any set.  A \textbf{gauge function} on $X$ is a map
\[
  \gamma:X\to(0,\infty).
\]
\end{definition}
\begin{definition}[Exhaustive $K$-division] Given a compact set $K \subseteq \mathbb{R}$, an \textbf{exhaustive $K$-division} is a finite, strictly-ordered set $\{p_i\}_{i=\overline{0,n}} \subseteq \mathbb{R}$ such that $p_0 < \min(K)$ and $\max(K) > p_n$.
\begin{definition}[$K$-cell]
Given a compact set $K \subseteq \mathbb{R}$ and an exhaustive $K$-division $\{p_i\}_{i=\overline{0,n}}$, a \textbf{$K$-cell} is any set of the form:
$$K \cap [p_{i-1}, p_i] \qquad K\cap [p_{i-1},p_i) \qquad K\cap (p_{i-1},p_i] \qquad K \cap (p_{i-1},p_i)$$
We will denote these $K$-cells with $C_i$. 
\end{definition}
\end{definition}
\begin{definition}[Classical Partition]
A \textbf{partition} $P$ of a compact set $K \subseteq \mathbb{R}$ is a (finite) collection of $K$-chains $\{C_i\}_{i=\overline{1,n}}$ covering $K$.
$$\bigcup_{i=1}^nC_i  = K$$
\end{definition}
\begin{definition}[Tagged Partition]
A \textbf{tagged partition} $\widehat{P}$ of a compact set $K \subseteq \mathbb{R}$ is a partition that has an additional collection of points $\{t_i\}_{i=\overline{1,n}}$ each sitting in precisely one $K$-cell: 
$$t_i \in C_i \quad\text { for } i = \overline{1,n}$$
\end{definition}
We will identify a tagged partition as: $\widehat{P}:= \left\{\left(t_i,C_i\right)]\mid i=\overline{1,n}\right\}$.
\begin{definition}[Gauge-Fine, Tagged Partition]
A \textbf{gauge-fine, tagged partition} $\widehat{P}_{\gamma}$ on a compact set $K \subseteq \mathbb{R}$ is a tagged partition, with the additional constraint that each $K$-chain is controlled by the gauge function evaluated at the corresponding tag.

$$\text{For } i=\overline{1,n}: C_i \subset (t_i - \gamma(t_i), t_i + \gamma(t_i))\cap K$$
\end{definition}
We will identify a gauge-fine tagged partition as:
$\widehat{P}_{\gamma}:=\left\{\left(t_i, C_i, \gamma)\right)\mid i=\overline{1,n}\right\}$.

\begin{theorem}[Separable Metric Space implies at most countable $\epsilon$-separated Subsets]
    Given a separable metric space $(X,d)$ and $\epsilon > 0$, any $\epsilon$-separated subset of $X$ is at most countable.
\end{theorem}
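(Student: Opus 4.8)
The plan is to turn separability into a counting device: a countable dense set is a fine enough ``grid'' to distinguish any two points of an $\epsilon$-separated set, and this will produce an injection of that set into the countable dense set. Throughout I take ``$S$ is $\epsilon$-separated'' to mean $d(s,s')\ge\epsilon$ for all distinct $s,s'\in S$ (the argument is insensitive to whether the inequality is strict). First I would fix a countable dense subset $D=\{q_1,q_2,\dots\}\subseteq X$, guaranteed by separability, and let $S\subseteq X$ be an arbitrary $\epsilon$-separated subset.

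Next I would build a map $f\colon S\to D$. For each $s\in S$, density provides some point of $D$ lying within distance $\epsilon/2$ of $s$; to sidestep any appeal to choice, I would set $f(s):=q_{n(s)}$, where $n(s)$ is the \emph{least} index $n$ with $d(s,q_n)<\epsilon/2$. This is well defined precisely because $D$ is dense, and the ``least index'' convention makes the assignment canonical.

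The key step is injectivity, which is where the radius $\epsilon/2$ is chosen so that the triangle inequality closes the gap. Suppose $f(s)=f(s')=q$ for distinct $s,s'\in S$. Then
\[
  d(s,s')\le d(s,q)+d(q,s')<\frac{\epsilon}{2}+\frac{\epsilon}{2}=\epsilon,
\]
contradicting $d(s,s')\ge\epsilon$. Hence $f$ is injective, so $S$ embeds into the countable set $D$ and is therefore at most countable. I do not expect a genuine obstacle here; the entire content sits in calibrating the ball radius to $\epsilon/2$ and fixing the separation convention. Equivalently, one could phrase the same argument geometrically: the open balls $\{z\in X\mid d(z,s)<\epsilon/2\}$ for $s\in S$ are pairwise disjoint, each must meet the dense set $D$, and a disjoint family each meeting a fixed countable set can have only countably many members.
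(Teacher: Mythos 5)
Your proof is correct and follows essentially the same route as the paper's: both rest on the observation that open balls of radius $\epsilon/2$ about the separated points are pairwise disjoint, each must meet the countable dense set, and this yields an injection of the separated set into that countable set. The only differences are cosmetic --- you argue directly (with a least-index convention that avoids any appeal to choice) where the paper argues by contradiction with an arbitrarily chosen point in each ball.
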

\begin{proof}
We suppose that $S_X$ is a dense subset of $X$. By fixing some $\epsilon > 0$, we assume that there exists some uncountable  $\epsilon$-separated subset $U\subseteq X$. For each $u \in U$, we consider the open ball $B_u : = B\left(u,\frac{\epsilon}{2}\right)$. The collection of all of the open balls $\{B\left(u,\frac{\epsilon}{2}\right)\}_{u \in U}$ is disjoint. 
Indeed, for any distinct $u_1,u_2 \in U$, $x \in B_{u_1}\cap B_{u_2}$ implies:
$$d(u_1, u_2) \leq d(u_1,x) + d(x,u_2) < \frac{\epsilon}{2}+ \frac{\epsilon}{2} = \epsilon$$
This contradicts the fact that $U$ is $\epsilon$-separated.
As $S$ is dense, each non-empty ball $B_u$ contains some point $s_u \in S_X$.

We now consider the following map:
$$\theta: U \longrightarrow S_X, \qquad u \longmapsto s_u$$
This map is injective. For all distinct $u_1,u_2 \in U$, we have $B_{u_1} \cap \, B_{u_2} = \emptyset$, forcing

$s_{u_1} \neq s_{u_2}.$ This shows that  $|U|\leq |S_X|$. Since $U$ is uncountable, so will $S_X$ be. Contradiction.

This proves our claim.
\end{proof}
\begin{theorem}[The  Image of a Linear Isometry between two Banach Spaces is a Banach Space]
Given two Banach spaces $(X, \|\cdot\|_X,\mathbb{F})$, $(Y,\|\cdot\|_Y ,\mathbb{F})$ and a linear isometry 

$T : X \to Y$, we get that $(T(X), \|\cdot\|_{T(X)}, \mathbb{F})$ is a Banach space, where $\|\cdot\|_{T(X)}$ is the restriction of $\|\cdot\|_Y$ to $T(X)$.
\end{theorem}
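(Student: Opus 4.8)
The plan is to show that $T(X)$, viewed as a subset of the complete space $Y$, is itself complete under the inherited norm; since completeness is the only thing separating a normed space from a Banach space, this will suffice. First I would dispatch the purely algebraic and normed-space bookkeeping. Because $T$ is linear, $T(X)$ is a linear subspace of $Y$, and the restriction $\|\cdot\|_{T(X)}$ of $\|\cdot\|_Y$ to this subspace is automatically a norm, as every norm axiom (positive-definiteness, homogeneity, the triangle inequality) is inherited by restriction. Thus $(T(X), \|\cdot\|_{T(X)}, \mathbb{F})$ is a normed $\mathbb{F}$-vector space, and only completeness remains.

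The key observation is that an isometry is injective: if $Tx = 0$ then $\|x\|_X = \|Tx\|_Y = 0$, so $x = 0$. Consequently each element of $T(X)$ has a unique preimage in $X$. I would then take an arbitrary Cauchy sequence $(y_n)_n$ in $T(X)$ and write $y_n = T x_n$ with each $x_n \in X$ uniquely determined. The defining isometry identity, combined with linearity, gives
$$\|x_n - x_m\|_X = \|T(x_n - x_m)\|_Y = \|y_n - y_m\|_Y,$$
so the sequence of preimages $(x_n)_n$ inherits the Cauchy property in $X$.

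Since $X$ is a Banach space, $(x_n)_n$ converges to some limit $x \in X$. Because an isometry is distance-preserving and therefore continuous, applying $T$ yields $T x_n \to Tx$ in $Y$, that is, $y_n \to Tx$. As $Tx \in T(X)$, the Cauchy sequence $(y_n)_n$ converges to a point of $T(X)$, which establishes completeness and hence the claim. Equivalently, one may phrase this as: $T$ is a bijective isometry from $X$ onto $T(X)$, so $T(X)$ is isometrically isomorphic to the complete space $X$ and is therefore complete.

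I do not anticipate a genuine obstacle here, as the argument is the standard transport of completeness across an isometry. The only point requiring care is to invoke injectivity explicitly, so that the preimages $x_n$ are well defined and the pulled-back sequence $(x_n)_n$ is unambiguous; once that is in place, the conclusion follows directly from the isometry identity together with the completeness of $X$.
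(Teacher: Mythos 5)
Your proof is correct, and its core mechanism is the same as the paper's: use injectivity (forced by the isometry) to pull a sequence in $T(X)$ back to a uniquely determined sequence in $X$, transfer the Cauchy property through the isometry identity, invoke completeness of $X$, and push the limit forward through the continuity of $T$. The one structural difference is where you start: you take a Cauchy sequence in $T(X)$ and prove convergence inside $T(X)$, i.e.\ completeness directly, whereas the paper takes a sequence in $T(X)$ that already converges to some $y \in Y$ and shows $y \in T(X)$, i.e.\ it proves that $T(X)$ is closed in $Y$, leaving implicit the final inference that a closed subspace of the Banach space $Y$ is itself complete. Your variant buys two small things: the argument is logically self-contained (no appeal to the standard fact about closed subspaces), and it never uses completeness of $Y$ at all, so your proof actually establishes the stronger statement that the isometric image of a Banach space in \emph{any} normed space is complete. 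Both routes are standard; yours is marginally more economical in hypotheses.
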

\begin{proof}
    We need to prove two facts:
\begin{enumerate}[label=\arabic*)]
    \item $T(X)$ is a vector subspace of $Y$
    \item $(T(X), \|\cdot\|_{T(X)}, \mathbb{F})$  is complete
\end{enumerate}
As $1)$ is an immediate consequence of the linearity of $T$, we will elaborate on the second fact.

For $2)$, we consider an arbitrary sequence $\{y_n\}_{n \in \mathbb{N}} \subseteq T(X): y_n \xrightarrow[\;n\to\,\infty\;]{\|\cdot\|_{Y}} y \in Y$.

Since for each $n \in \mathbb{N}: y_n = T(x_n)$ for some unique $x_n$, we get, by the isometry property, that:
$$\|x_n - x_m\|_X = \|T(x_n) - T(x_m)\|_Y=\|y_n - y_m\|_Y \xrightarrow[\,n,\,m\,\to\, \infty]{|\cdot|_{\mathbb{F}}} 0$$
This proves that $\{x_n\}_{n \in \mathbb{N}}$ is Cauchy. By the completeness of $(X, \|\cdot\|_X,\mathbb{F})$, we have:
$$x_n\xrightarrow[\,n\,\to \,\infty]{\|\cdot\|_{\mathbb{X}}} x \,\,\text{ with } x \in X$$
But then:
$$ T(x_n) \xrightarrow[\,n\,\to \,\infty]{\|\cdot\|_{\mathbb{Y}}} T(x)$$
As a consequence of the uniqueness of limits in Banach spaces, we get:
$$ y_n  \xrightarrow[\,n\,\to \,\infty]{\|\cdot\|_{\mathbb{Y}}} y \,\,\text{ with } y =T(x) \in T(X)$$
This proves the second fact, finishing our claim. 
\end{proof}

\begin{theorem}[The Cauchy-Schwarz Inequality]
    Given an inner product vector space $(V,\langle\cdot,\cdot\rangle_V,\mathbb{F})$, the following inequality holds:
    $$\text{For all } u,v \in V: |\inner{u}{v}_V|_{\mathbb{F}} \leq \|u\|_V\|v\|_V$$
\end{theorem}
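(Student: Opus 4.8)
The plan is to exploit the positive-definiteness of the inner product by examining the norm of a cleverly chosen combination of $u$ and $v$, and then to optimise over a scalar parameter. First I would dispose of the degenerate case: if $v = 0$, then $\|v\|_V = 0$ and, by the axioms of the inner product, $\inner{u}{v}_V = 0$, so both sides vanish and the inequality holds with equality. Henceforth I assume $v \neq 0$, which by positive-definiteness guarantees $\|v\|_V > 0$ and hence permits division by $\|v\|_V^2$.

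For the main case I would introduce the scalar $\lambda := \dfrac{\inner{u}{v}_V}{\|v\|_V^2} \in \mathbb{F}$ and consider the vector $u - \lambda v$. Positive-definiteness yields $0 \leq \|u - \lambda v\|_V^2 = \inner{u - \lambda v}{u - \lambda v}_V$, and expanding by sesquilinearity (bilinearity in the real case) gives
$$0 \leq \|u\|_V^2 - \overline{\lambda}\,\inner{u}{v}_V - \lambda\,\inner{v}{u}_V + |\lambda|_{\mathbb{F}}^2\,\|v\|_V^2.$$
Substituting the chosen $\lambda$ and using conjugate symmetry $\inner{v}{u}_V = \overline{\inner{u}{v}_V}$ collapses the three rightmost terms, leaving only $\|u\|_V^2 - \dfrac{|\inner{u}{v}_V|_{\mathbb{F}}^2}{\|v\|_V^2} \geq 0$. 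Rearranging produces $|\inner{u}{v}_V|_{\mathbb{F}}^2 \leq \|u\|_V^2\,\|v\|_V^2$, and taking the monotone square root over the nonnegative reals delivers the claimed inequality.

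The hard part will be the bookkeeping of the complex conjugates. Since $\mathbb{F}$ may be $\mathbb{C}$, I must track which argument the inner product is linear in and apply conjugate symmetry consistently: the cross terms $\overline{\lambda}\,\inner{u}{v}_V$ and $\lambda\,\inner{v}{u}_V$ are complex conjugates of one another, so their sum is twice a real part, and the precise choice of $\lambda$ is exactly what makes that real part equal to $|\inner{u}{v}_V|_{\mathbb{F}}^2 / \|v\|_V^2$. A slicker alternative that sidesteps some of this is to first rotate by a unit-modulus scalar so that $\inner{u}{v}_V$ becomes real and nonnegative, reducing everything to the real quadratic $t \mapsto \|u - t v\|_V^2 \geq 0$ and invoking its discriminant; but the parameter-substitution route above is self-contained and avoids a case-split on $\mathbb{R}$ versus $\mathbb{C}$.
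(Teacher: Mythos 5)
Your proposal is correct and follows essentially the same route as the paper: both dispose of $v=0$ first, then use the projection coefficient $\lambda = \inner{u}{v}_V/\|v\|_V^2$ and the nonnegativity of $\|u-\lambda v\|_V^2$. The paper merely packages the same algebra as an orthogonal decomposition $u = u_1 + u_2$ with Pythagoras, whereas you expand the squared norm directly; the cancellation of cross terms you track by hand is exactly what the paper's orthogonality statement encodes.
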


\begin{proof}
If $v = 0$, then both sides will be zero. 

We suppose $v\neq 0$. Then, we may decompose any $u$ in $V$ as follows: 
$$u = u_1 + u_2 \text{ where } u_1:= \frac{\inner{u}{v}_V}{\|v\|^2_V}v \,\,\text{  and  }\,\, u_2:= u - \frac{\inner{u}{v}_V}{\|v\|^2_V}v$$
By the fact that $u_1$ and $u_2$ are orthogonal, we get:
$$\|u\|^2_V = \|u_1\|^2_V + \|u_2\|^2_V \geq \|u_1\|^2_V = \frac{\left|\inner{u}{v}_V\right|_{\mathbb{F}}}{\|v\|^2_V}$$
Multiplying through by $\|v\|^2_V$, we get our desired result:
$$|\inner{u}{v}_V|_{\mathbb{F}} \leq \|u\|_V\|v\|_V$$
This finishes our proof.
\end{proof}
\begin{theorem}[A Sufficient Criterion for the Spectrum of a Closed, Unbounded Operator]
Given a Hilbert space $(H,\mathbb{F})$, a closed, unbounded operator $(A,\mathrm{D}(A))$ on $H$ and some scalar $\lambda  \in \mathbb{C}$, if there exists a unit-normed sequence $\{x_n\}_{n \in \mathbb{N}} \subseteq H : (A - \lambda I)x_n \xrightarrow[\,n \, \to \,\infty]{\|\cdot\|_{H}} 0$, then $\lambda \in \sigma(A)$.
\end{theorem}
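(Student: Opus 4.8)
The plan is to argue by contraposition, proving the equivalent statement that if $\lambda$ lies in the resolvent set $\rho(A) = \mathbb{C}\setminus\sigma(A)$, then no unit-normed sequence $\{x_n\}_{n\in\mathbb{N}}\subseteq \mathrm{D}(A)$ can satisfy $(A-\lambda I)x_n \to 0$. The entire argument hinges on a single quantitative fact: the boundedness of the resolvent operator $R_\lambda := (A-\lambda I)^{-1}$, which lets a lower bound on $\|x_n\|_H$ be converted into a lower bound on $\|(A-\lambda I)x_n\|_H$.

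First I would fix $\lambda\in\rho(A)$. By definition of the resolvent set, $A-\lambda I$ is a bijection from $\mathrm{D}(A)$ onto $H$. Since $A$ is closed and $-\lambda I$ is bounded, the operator $A-\lambda I$ is closed as well; consequently its inverse $R_\lambda$ is a closed operator that is \emph{defined on all of} $H$ (its domain is $\mathrm{Ran}(A-\lambda I)=H$). Invoking \LaxClosedGraphThm\ on this everywhere-defined closed operator yields $R_\lambda\in\mathrm{B(H)}$, i.e. $\|R_\lambda\| < \infty$.

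With boundedness in hand, the conclusion is a short estimate. For each $n$, since $x_n\in\mathrm{D}(A)$ we have the identity $x_n = R_\lambda(A-\lambda I)x_n$, and since $\|x_n\|_H = 1$ the submultiplicativity of the operator norm gives $1 = \|x_n\|_H = \|R_\lambda (A-\lambda I)x_n\|_H \leq \|R_\lambda\|\,\|(A-\lambda I)x_n\|_H$. Letting $n\to\infty$, the right-hand side tends to $0$ by hypothesis while the left-hand side stays equal to $1$, producing the contradiction $1\leq 0$. Hence $\lambda\notin\rho(A)$, that is, $\lambda\in\sigma(A)$.

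The main obstacle I anticipate is not the final inequality but the justification of the resolvent's boundedness: one must verify that $R_\lambda$ genuinely satisfies the hypotheses of the closed graph theorem. The delicate points are that the inverse of a closed bijective operator is again closed (which follows from the graph of $R_\lambda$ being the reflection of the graph of $A-\lambda I$ across the diagonal of $H\times H$, a closed set) and that its domain is all of $H$ (surjectivity of $A-\lambda I$). If instead the paper's definition of $\rho(A)$ already builds in the boundedness of $(A-\lambda I)^{-1}$, this step collapses and the closed graph theorem can be omitted entirely; I would note this so the argument reads cleanly under either convention.
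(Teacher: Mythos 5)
Your proposal is correct and follows essentially the same route as the paper: both assume $\lambda \in \rho(A)$, apply the inverse $(A-\lambda I)^{-1}$ to $(A-\lambda I)x_n$, and derive the contradiction $1 = \|x_n\|_H \leq \|(A-\lambda I)^{-1}\|_{op}\,\|(A-\lambda I)x_n\|_H \to 0$. The only difference is that the paper simply asserts that $(A-\lambda I)^{-1}$ is a closed, bounded operator on $H$, whereas you justify its boundedness via the Closed Graph Theorem (closedness of the inverse plus surjectivity of $A-\lambda I$); this extra care is welcome but does not change the argument.
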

\begin{proof}

Let $\{x_n\}_{n \in \mathbb{N}} \subseteq H$ be a unit-normed sequence such that:
$$ (A - \lambda I)x_n \xrightarrow[\,n \, \to \,\infty]{\|\cdot\|_{H}} 0$$
We suppose that $\lambda \in  \rho(A)$. Then $A- \lambda I$ is a bijection. In particular, we have a closed, bounded operator:
$$(A-\lambda I)^{-1} : H \longrightarrow H$$
We consider the following sequence: $y_n : = (A- \lambda I)x_n$ for all $n \in \mathbb{N}$.
Then:
$$\text{For all } n \in \mathbb{N}:\|x_n\|_H = \|(A-\lambda I)^{-1}(A - \lambda I)x_n\|_H \leq \|(A- \lambda I)^{-1}\|_{op}\|y_n\|_H  \xrightarrow[\,n \, \to \,\infty]{|\cdot|_{\mathbb{R}}} 0$$
This contradicts the fact that $\{x_n\}_{n \in \mathbb{N}}$ is unit-normed. As a consequence, $\lambda \in \sigma(A)$.

This proves our claim.
\end{proof}
\begin{theorem}[Unbounded Normal Operators are Closed]
Given a Hilbert space $(H, \mathbb{F})$, and a normal, unbounded operator $(A,\mathrm{D}(A))$ on $H$, $A$ is closed.
\end{theorem}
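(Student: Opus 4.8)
The plan is to exploit the fact that the adjoint of any densely defined operator is automatically closed, and then to transfer this closedness to $A$ itself using the two defining features of normality: the equality of domains $\mathrm{D}(A) = \mathrm{D}(A^*)$ and the isometric relation $\|Ax\|_H = \|A^*x\|_H$ for every $x \in \mathrm{D}(A)$. First I would recall (or invoke) the standard result that $A^*$ is closed whenever $A$ is densely defined; this is the single external ingredient on which the argument rests.

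To show $A$ is closed I would verify directly that its graph is closed in $H \times H$. So take a sequence $\{x_n\}_{n \in \mathbb{N}} \subseteq \mathrm{D}(A)$ with $x_n \to x$ and $Ax_n \to y$ in $H$. Because $A$ is linear and the isometric relation applies to the differences $x_n - x_m \in \mathrm{D}(A) = \mathrm{D}(A^*)$, we obtain $\|A^*x_n - A^*x_m\|_H = \|A(x_n - x_m)\|_H = \|Ax_n - Ax_m\|_H$, so $\{A^*x_n\}_{n \in \mathbb{N}}$ inherits the Cauchy property from the convergent sequence $\{Ax_n\}_{n \in \mathbb{N}}$ and therefore converges to some $z \in H$.

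Now the closedness of $A^*$ enters: from $x_n \to x$ and $A^*x_n \to z$ we conclude $x \in \mathrm{D}(A^*) = \mathrm{D}(A)$ and $A^*x = z$. The final step is to identify $y$ with $Ax$. Since both $x_n$ and $x$ now lie in $\mathrm{D}(A)$, applying the isometric relation once more to $x_n - x$ gives $\|Ax_n - Ax\|_H = \|A^*x_n - A^*x\|_H \to 0$, hence $Ax_n \to Ax$; comparing with $Ax_n \to y$ and using uniqueness of limits in $H$ yields $Ax = y$. Thus $(x,y)$ lies in the graph of $A$, which is therefore closed.

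The only genuinely delicate point is the bookkeeping about domains: the whole argument hinges on the equality $\mathrm{D}(A) = \mathrm{D}(A^*)$ being part of the hypothesis of normality, since this is what lets the isometric identity be applied both to $x_n - x_m$ (before we know $x \in \mathrm{D}(A)$) and, crucially, to $x_n - x$ (after the closedness of $A^*$ has placed $x$ inside the common domain). Equivalently, one can package all of this by observing that the isometric relation makes the graph norms of $A$ and $A^*$ coincide on $\mathrm{D}(A) = \mathrm{D}(A^*)$, so that completeness of $\bigl(\mathrm{D}(A^*), \|\cdot\|_{\mathcal{G}}\bigr)$ — which is merely a restatement of $A^*$ being closed — immediately forces completeness of $\bigl(\mathrm{D}(A), \|\cdot\|_{\mathcal{G}}\bigr)$, i.e. the closedness of $A$.
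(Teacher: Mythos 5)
Your proposal is correct and rests on exactly the same two ingredients as the paper's proof: the automatic closedness of $A^*$ for a densely defined operator, and the transfer of that closedness to $A$ via the normality conditions $\mathrm{D}(A) = \mathrm{D}(A^*)$ and $\|Ax\|_H = \|A^*x\|_H$. Your sequential verification of graph-closedness is simply the unpacked version of the paper's argument, and your final paragraph (graph norms coincide, so completeness of $\bigl(\mathrm{D}(A^*), \|\cdot\|_{\mathcal{G}}\bigr)$ forces completeness of $\bigl(\mathrm{D}(A), \|\cdot\|_{\mathcal{G}}\bigr)$) is precisely the proof given in the paper.
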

\begin{proof}
For the sake of brevity, we will let $\|\cdot\|_{\mathcal{G}(A)}$ and $\|\cdot\|_{\mathcal{G}(A^{*})}$ be the graph norms associated to $A$ and $A^*$. Let $(A, \mathrm{D}(A))$ be a normal, unbounded operator on $H$. Since $A^*$ is closed, we have that $(D(A^*), \|\cdot\|_{\mathcal{G}(A^*)}, \mathbb{F})$ is a Banach space. By the normality of $A$, we get two facts:
\begin{enumerate}
    \item $\mathrm{D}(A) = \mathrm{D}(A^*)$ 
    \item For all  $x \in \mathrm{D}(A): \|x\|_{\mathcal{G}(A)}: = \|x\|_{\mathrm{D}(A)} + \|Ax\|_{H} = \|x\|_{\mathrm{D}(A^*)} + \|A^*x\|_H = \|x\|_{\mathcal{G}(A^*)}$
\end{enumerate}
This implies that $(D(A^*), \|\cdot\|_{\mathcal{G}(A^*)}, \mathbb{F}) = (D(A), \|\cdot\|_{\mathcal{G}(A)}, \mathbb{F})$. As  $(D(A^*), \|\cdot\|_{\mathcal{G}(A^*)}, \mathbb{F})$ is a Banach space, so will $(D(A), \|\cdot\|_{\mathcal{G}(A)}, \mathbb{F})$ be a Banach space. Equivalently, $A$ is closed.

This proves the closedness of unbounded normal operators.
\end{proof}

\begin{theorem}[A Different Way of seeing Norms induced by Inner Products]
Given an inner product vector space  $(V,\langle\cdot,\cdot\rangle_V,\mathbb{F})$, we have:
$$\text{For all } v \in V: \,\,\|v\|_{H} \,= \hspace{-0.2cm}\sup\limits_{u \in V,\ \|u\|_V = 1} |\inner{v}{u}_V|_{\mathbb{F}}$$
\end{theorem}
\begin{proof}
Let $v \in V$. As $v = 0$ implies that both sides will be zero, we suppose $v \neq 0$.

For $\leq$, we consider $ w = \displaystyle\frac{v}{\,\,\,\,\|v\|_{V}}$.

Then:

$$|\inner{v}{w}_V|_{\mathbb{F}} = \frac{\|v\|^2_V}{\|v\|_V} = \|v\|_V$$

This shows:
$$  \,\,\|v\|_{H} \,\leq\hspace{-0.2cm}\sup\limits_{u \in V,\ \|u\|_V = 1} |\inner{v}{u}_V|_{\mathbb{F}}$$.

For $\geq$, we will make use of the Cauchy-Schwarz inequality, directly leading to the claim.
$$\text{For all } u,v \in V: |\inner{v}{u}_V|_{\mathbb{F}} \leq \|v\|_V\|u\|_V$$
By taking the supremum over all unit-normed values $w \in  H$, we get:
$$ \sup\limits_{u \in V,\ \|u\|_V = 1} |\inner{v}{u}_V|_{\mathbb{F}} \,\leq  \|v\|_{H} $$
This proves our claim.
\end{proof}
\begin{theorem}[Riesz's Representation Theorem]
Given a Hilbert space $(H,\mathbb{F})$, there exists a unique antilinear isometric isomorphism between $H$ and $H^*$ given by:
\begin{equation*}
\begin{aligned}
  \varphi\colon\;&H \;\longrightarrow\; H^*\\
  &v\;\longmapsto\; f_v := \inner{\cdot}{v}_H
\end{aligned}
\end{equation*}
\end{theorem}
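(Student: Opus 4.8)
The plan is to prove Riesz's Representation Theorem by establishing, in order, that $\varphi$ is well-defined (each $f_v$ is indeed a bounded linear functional), antilinear, isometric, and finally surjective; injectivity will follow immediately from isometry. The main work lies in surjectivity, which is where the geometry of the Hilbert space genuinely enters.

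First I would verify that for each fixed $v \in H$, the map $f_v := \inner{\cdot}{v}_H$ is a bounded linear functional: linearity in the first slot is the definition of the inner product, and boundedness with $\|f_v\|_{H^*} \le \|v\|_H$ is immediate from the Cauchy-Schwarz Inequality (proved earlier in the excerpt). Next, antilinearity of $\varphi$ follows from the conjugate-linearity of the inner product in its second argument, so that $\varphi(\alpha v + w) = \overline{\alpha}\,\varphi(v) + \varphi(w)$. For the isometry claim, I would invoke the earlier theorem giving $\|v\|_H = \sup_{\|u\|_V = 1}|\inner{v}{u}_V|_{\mathbb{F}}$; since $\|f_v\|_{H^*} = \sup_{\|u\|=1}|f_v(u)| = \sup_{\|u\|=1}|\inner{u}{v}_H| = \|v\|_H$, this gives $\|\varphi(v)\|_{H^*} = \|v\|_H$ at once. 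Isometry forces injectivity: if $\varphi(v) = \varphi(w)$ then $\|v - w\|_H = \|\varphi(v-w)\|_{H^*} = 0$.

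The hard part will be surjectivity, which is the only step that requires more than formal manipulation. Given an arbitrary bounded linear functional $f \in H^*$, I would split into the trivial case $f = 0$ (take $v = 0$) and the case $f \ne 0$. In the nontrivial case, the kernel $\ker(f)$ is a proper closed subspace of $H$ (closed because $f$ is continuous), so its orthogonal complement $\ker(f)^{\perp}$ is nontrivial. Here one must lean on the orthogonal decomposition theorem $H = \ker(f) \oplus \ker(f)^{\perp}$, which in turn rests on the projection onto a closed convex set — this is the genuine completeness-dependent ingredient and the real obstacle, so I would flag the use of this result explicitly rather than treating it as routine. Picking a unit vector $z \in \ker(f)^{\perp}$, the standard trick is to check that $f(x)z - f(z)x \in \ker(f)$ for every $x$, take the inner product with $z$, and rearrange to obtain $f(x) = \inner{x}{\,\overline{f(z)}\,z\,}_H$ for all $x \in H$. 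Setting $v := \overline{f(z)}\,z$ then yields $f = f_v = \varphi(v)$, establishing surjectivity.

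Finally, uniqueness of the isomorphism is subsumed by injectivity: the representing vector $v$ for a given $f$ is unique because two representing vectors would have difference in the kernel of all functionals $\inner{\cdot}{w}_H$, hence equal by the isometry. I expect the only subtlety worth stating carefully is the antilinearity (as opposed to linearity), which is a consequence of the convention that the inner product is conjugate-linear in the second variable, matching the placement of $v$ in the second slot of $\inner{\cdot}{v}_H$; over $\mathbb{F} = \mathbb{R}$ this distinction collapses, but over $\mathbb{C}$ it is essential and should be written out.
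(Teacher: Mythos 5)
Your proposal is correct, and its overall skeleton (well-definedness and antilinearity from the second-slot conjugate-linearity, isometry via Cauchy--Schwarz plus the choice $u = v/\|v\|_H$, then surjectivity as the substantive step) matches the paper's. The surjectivity argument, however, is genuinely different in the one place that matters. The paper picks an \emph{arbitrary} $z$ with $f(z)\neq 0$ and uses the purely algebraic decomposition $H = \mathrm{Ker}(f)\oplus\mathrm{Span}(z)$, with $x = \bigl(x - \tfrac{f(x)}{f(z)}z\bigr) + \tfrac{f(x)}{f(z)}z$; it then computes $\inner{x}{\overline{f(z)}z}_H$ by discarding the inner product of the kernel component against $z$. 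That discard is only legitimate if $z \perp \mathrm{Ker}(f)$, which an arbitrary $z$ with $f(z)\neq 0$ need not satisfy (in $\mathbb{R}^2$ with $f(x_1,x_2)=x_1$, the vector $z=(1,1)$ has $f(z)\neq 0$ but is not orthogonal to the kernel); the paper defers this as ``additional work, which we will omit,'' claiming $\mathrm{Span}(z)=\mathrm{Ker}(f)^{\perp}$, a claim that is false for such a $z$. Your route avoids this gap entirely: you invoke the orthogonal decomposition $H = \mathrm{Ker}(f)\oplus\mathrm{Ker}(f)^{\perp}$ (correctly flagged as the completeness-dependent ingredient, resting on projection onto a closed subspace), choose a unit $z \in \mathrm{Ker}(f)^{\perp}$, and use the identity $f(x)z - f(z)x \in \mathrm{Ker}(f)$ paired against $z$ to get $f(x) = \inner{x}{\overline{f(z)}\,z}_H$. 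What your approach buys is rigor and an honest accounting of where completeness enters; what the paper's approach attempts to buy is avoidance of the projection theorem, but as written it does not succeed without the orthogonality it omits. Your version is the one that closes airtight.
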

\begin{proof}
If $H = \{0\}$, then the statement holds trivially.
We will assume that $H\neq\{0\}$. As any inner product on a vector space is antilinear in the second argument, we get that our map is antilinear. 
We will now move on to prove, in this order, isometry, bijectivity, and uniqueness of the map.

\textbf{Isometry:}

For $v= 0$, we have $f_v = 0_{H^*}$, establishing the isometry condition: 
$$\|f_v\|_{op}=\sup_{u\in H,\ \|u\|_H=1}|f_v(u)| = \|v\|_H = 0 $$
For $v\neq 0$, we get:
$$\|f_v\|_{op}=\sup_{u\in H,\ \|u\|_H=1}|f_v(u)|  =  \sup_{u\in H,\ \|u\|_H=1} |\inner{u}{v}_H|_{\mathbb{F}}$$
By the Cauchy-Schwarz inequality, we have:
$$|\inner{u}{v}_H|_{\mathbb{F}} \leq \|u\|_H\|v\|_H \,\,\,\text{ implies } \,\, \sup_{u\in H,\ \|u\|_H=1}|\inner{u}{v}_H|_{\mathbb{F}} \leq \|v\|_H$$
For the other inequality, taking $u = \frac{v}{\|v\|_H}$ yields:
$$\sup_{u\in H,\ \|u\|_H=1}|\inner{u}{v}_H|_{\mathbb{F}} \geq \left|\inner{\frac{v}{\|v\|_H}}{v}\right|_{\mathbb{F}} = \|v\|_H$$
Hence, from the last two inequalities, we get our isometry:
$$\|f_v\|_{op} =\|v\|_H \,\,\text{ for all } \,\, v \in H$$
\textbf{Bijectivity:} 

For $f_v = 0_{H^*}$, there is only one value $v$ in $H$ such that $\inner{\cdot}{v}_H = 0$, namely $v = 0$.

We suppose $f_v\neq 0_{H^*}$. Since $f_v$ is bounded, hence continuous, and singletons are closed in $\mathbb{F}$, we get that $\mathrm{Ker}(f_v) = f_v^{-1}(\{0\})$ is a closed subspace of $H$. As $f_v\neq 0_{H^*}$ implies $\mathrm{Ker}(f_v) \subsetneq H$, there must be an element in $z \in H$ such that $f_v(z) \neq 0.$ We claim the following:
$$H = \mathrm{Ker}(f_v) \bigoplus \mathrm{Span}(z)$$
By this, we mean that every element $x$ in $H$ can be written as the sum of two other elements, each lying in only one of these two spaces. Additionally, we will also require that $\mathrm{Ker}(f_v) \cap \mathrm{Span}(z) = \{0\}$.

We will start by considering an arbitrary element $x$ in $H$, which we will decompose as such:
$$ x  = (x - \lambda_x z) + \lambda_x z \quad\text{ for some } \quad \lambda_x \in \mathbb{F}$$
We want the first term to belong to $\mathrm{Ker}(f_v)$, hence finding out our scalar becomes a simple algebraic play:
$$x - \lambda_xz \in \mathrm{Ker}(f_v) \text{ if and only if } f_v(x - \lambda_xz) = 0  \text{ if and only if } \lambda_x = \frac{f_v(x)}{f_v(z)}$$
Hence, our decomposition becomes: 
$$ x  = \left(x - \frac{f_v(x)}{f_v(z)}z\right) + \frac{f_v(x)}{f_v(z)}z$$
We may now wish to show that $\mathrm{Ker}(f_v) \cap \mathrm{Span}(z) = \{0\}$. Any element in the intersection must be of the form $\lambda z$ for some $\lambda \in \mathbb{F}$, such that $f_v(\lambda z) = \lambda f_v(z) = 0$. Since $f_v(z) \neq 0$, we have that $\lambda = 0$, implying: $\{0\} \subseteq \mathrm{Ker}(f_v) \cap \mathrm{Span}(z) \subseteq \{0\}$, and hence, proving $\mathrm{Ker}(f_v) \cap \mathrm{Span}(z) = \{0\}$. With some additional work, which we will omit, one can also prove that $\mathrm{Span}(z) = \mathrm{Ker}(f_v)^{\perp}$. Without loss of generality, we will normalize $z$. By fixing an arbitrary $x$ in $H$ and setting $v = \overline{f_v(z)}z$, besides getting the fact that our function $f_v$ is well-defined under $x$, we have:
\begin{align*}
    \inner{x}{v}_H 
    &= \inner{\left(x - \frac{f_v(x)}{f_v(z)}z\right) + \frac{f_v(x)}{f_v(z)}z}{\overline{f_v(z)}z}_H\\
    &= \inner{\frac{f_v(x)}{f_v(z)}z}{\overline{f_v(z)}z}_H\\
    &= f_v(x)
\end{align*}
This proof of the existence of $v$ is constructive in the sense that it explicitly shows who our candidate is.

To finalise bijectivity, it remains to show uniqueness for $v$. We suppose that there exist two values $v_1, v_2$ in $H$ such that $\inner{\cdot}{v_1} = \inner{\cdot}{v_2}$  However, evaluating both sides at $v_1 - v_2$ will directly give us $v_1 = v_2$, proving uniqueness.
\medskip

\textbf{Uniqueness of the Map:}

Suppose there are two such maps $\varphi_1,\varphi_2$. 

Then, for each $v$ in $H$ we have $\varphi_1(v) = \inner{\cdot}{v}_H = \varphi_2(v)$. 

Since they agree as functionals everywhere in $H$, they are indeed the same.

This finishes our proof.
\end{proof}
\bigskip
\bigskip
With the help of Riesz’s Representation Theorem, we obtain arguably the most important bijection for the construction of our new functional calculus.
\begin{corollary}[The Bijection between Sesquilinear Forms and Bounded, Linear Operators]
Given a bounded, sesquilinear form $\eta$ on a Hilbert space $(H,\mathbb{F})$, there exists a unique operator $T$ in $\mathrm{B}(H)$ such that:
$$\text{For all } x,y \in H: \eta(x,y) = \inner{Tx}{y}_H$$
Furthermore:
$$\|T\|_{op} = \|\eta\|_{op}:=\sup\{|\eta(x,y)|_{\mathbb{F}}\mid x,y \in H \text{ and } \|x\|_H = \|y\|_H =1\}$$
\end{corollary}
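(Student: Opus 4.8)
The plan is to build $T$ one input at a time using Riesz's Representation Theorem, and then recover its algebraic and metric properties from the uniqueness and isometry already recorded for $\varphi$. Fix $x \in H$ and consider the map $\phi_x \colon H \to \mathbb{F}$ given by $\phi_x(y) := \overline{\eta(x,y)}$. Because $\eta$ is antilinear in its second slot, the conjugate $\phi_x$ is genuinely $\mathbb{F}$-linear, and the estimate $|\phi_x(y)|_{\mathbb{F}} = |\eta(x,y)|_{\mathbb{F}} \le \|\eta\|_{op}\|x\|_H\|y\|_H$ shows that it is bounded. Riesz's Representation Theorem then furnishes a unique vector, which I name $Tx$, with $\phi_x(y) = \inner{y}{Tx}_H$ for all $y \in H$; conjugating both sides and using conjugate symmetry gives the defining relation $\eta(x,y) = \inner{Tx}{y}_H$. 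The assignment $x \mapsto Tx$ is the candidate operator.

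Next I would verify linearity of $T$. For scalars $\alpha,\beta \in \mathbb{F}$ and vectors $x_1,x_2 \in H$, linearity of $\eta$ in its first argument yields, for every $y \in H$, the identity $\inner{T(\alpha x_1 + \beta x_2)}{y}_H = \alpha\,\eta(x_1,y) + \beta\,\eta(x_2,y) = \inner{\alpha Tx_1 + \beta Tx_2}{y}_H$. Since a vector of $H$ is determined by its inner products against all of $H$ (evaluate the difference against itself, exactly as in the uniqueness step of the Riesz proof), the two left-hand vectors coincide, so $T(\alpha x_1 + \beta x_2) = \alpha Tx_1 + \beta Tx_2$. For the norm, I would invoke the theorem identifying $\|v\|_H = \sup_{\|u\|_V=1}|\inner{v}{u}_V|_{\mathbb{F}}$ with $v = Tx$: this gives $\|Tx\|_H = \sup_{\|y\|_H=1}|\inner{Tx}{y}_H|_{\mathbb{F}} = \sup_{\|y\|_H=1}|\eta(x,y)|_{\mathbb{F}} \le \|\eta\|_{op}\|x\|_H$, so $T$ is bounded with $\|T\|_{op} \le \|\eta\|_{op}$. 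The reverse inequality follows from the Cauchy--Schwarz inequality, since $|\eta(x,y)|_{\mathbb{F}} = |\inner{Tx}{y}_H|_{\mathbb{F}} \le \|Tx\|_H\|y\|_H \le \|T\|_{op}\|x\|_H\|y\|_H$ forces $\|\eta\|_{op} \le \|T\|_{op}$; hence the two norms agree.

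Finally, uniqueness of $T$ is immediate: if $T_1,T_2 \in \mathrm{B}(H)$ both represent $\eta$, then $\inner{(T_1-T_2)x}{y}_H = 0$ for all $x,y$, and fixing $x$ while taking $y = (T_1-T_2)x$ forces $(T_1-T_2)x = 0$, so $T_1 = T_2$. I expect the only genuinely delicate point to be the conjugation bookkeeping in the construction step --- applying Riesz to $\overline{\eta(x,\cdot)}$ rather than to $\eta(x,\cdot)$ directly, so that antilinearity in the second slot is converted into an honest linear functional --- together with the transfer of linearity from $\eta$ to $T$ via nondegeneracy of the inner product. Everything beyond that reduces to the Cauchy--Schwarz inequality and the supremum description of the Hilbert-space norm already established above.
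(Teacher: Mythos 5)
Your proposal is correct and follows essentially the same route as the paper: fix $x$, apply Riesz's Representation Theorem to the conjugated functional $y \mapsto \overline{\eta(x,y)}$ to produce $Tx$, transfer linearity from $\eta$ via nondegeneracy of the inner product, and obtain $\|T\|_{op} = \|\eta\|_{op}$ by two Cauchy--Schwarz-type inequalities, with uniqueness from testing against $y = (T_1 - T_2)x$. The only cosmetic difference is in the bound $\|T\|_{op} \le \|\eta\|_{op}$: you invoke the supremum description $\|Tx\|_H = \sup_{\|y\|_H = 1}|\inner{Tx}{y}_H|_{\mathbb{F}}$, whereas the paper computes $\|Tx\|_H^2 = \eta(x,Tx)$ and divides by $\|Tx\|_H$ --- both rest on the same facts and are equally valid.
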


\begin{proof}
   As any sesquilinear form is antilinear in its second argument, we will consider a homogenised variant. We fix an arbitrary element $x$ in $H$, a sesquilinear form $\eta$, and consider the following bounded, linear map:
\begin{equation*}
\begin{aligned}
  f_x\colon\;&H \;\longrightarrow\; \mathbb{F}\\
  &y\;\longmapsto\; \overline{\eta(x,y)}
\end{aligned}
\end{equation*}
Since $f_x$ lives in $H^*$, we may invoke Riesz's Representation Theorem to obtain a unique element $Tx$ in $H$ such that:
$$ f_x(y) = \inner{y}{Tx} = \overline{\inner{Tx}{y}}$$
This directly yields a direct correspondence with the chosen sesquilinear form $\eta$:
$$\eta(x,y) = \inner{Tx}{y}_H$$
We will now move on to prove, in this order, the linearity, boundedness, and uniqueness of $T$.
\medskip

\textbf{Linearity:}

For all $x_1,x_2,y \in H \text{ and }\alpha_1,\alpha_2 \in \mathbb{F}$, we have:
\begin{align*}
    \inner{T(\alpha_1 x_1 + \alpha_2 x_2)}{y}_H 
    &= \eta(\alpha_1 x_1 + \alpha_2 x_2, y)\\
    &=\alpha_1\eta(x_1,y) + \alpha_2\eta(x_2,y)\\
    &= \alpha_1\inner{Tx_1}{y}_H + \alpha_2\inner{Tx_2}{y}_H\\
    &=\inner{\alpha_1 Tx_1}{y}_H +\inner{\alpha_2Tx_2}{y}_H\\
    &=\inner{\alpha_1 Tx_1 + \alpha_2 Tx_2}{y}_H\\
\end{align*}
This proves linearity.
\medskip

\textbf{Boundedness:}

For all $x \in H$, we have:
$$\|Tx\|^2_H = \inner{Tx}{Tx}_H = \eta(x, Tx) = |\eta(x,Tx)|_{\mathbb{F}} \leq \|\eta\|_{op}\|x\|_H\|Tx\|_H$$
\textbf{Remark:} Here, it is worth mentioning that the sesquilinear form agrees with its modulus, as taking norms yields a positive value.

Dividing by $\|Tx\|_H$ on both sides and taking the supremum over all normalized values of $x$ in $H$, we obtain: 
$$\|T\|_{op} \leq \|\eta\|_{op}$$
This proves boundedness.

Since we are one inequality away from establishing the fact that these two norms coincide, we will seize the opportunity. Starting from the sesquilinear form, we have:
$$|\eta(x,y)|_{\mathbb{F}}\leq |\inner{Tx}{y}_H|_{\mathbb{F}} \leq \|T\|_{op}\|\|x\|_H\|y\|_H$$
By taking the supremum over all normalized values of $x$ and $y$ in $H$, we get the final result:
$$\|\eta\|_{op} \leq \|T\|_{op}$$
This establishes the equality between the two norms. 
\medskip

\textbf{Uniqueness:}

We suppose there are two such operators $T_1,T_2$. 

Then, for all $x,y \in H$ we have $\inner{T_1x}{y}_H = \inner{T_2x}{y}_H$. By letting $y = \left(T_1 - T_2\right)x$, we establish that the two operators coincide.

This finishes both uniqueness and the proof as a whole.
\end{proof}

\begin{theorem}[Closed Graph Theorem \LaxClosedGraphThm]\label{thm:closedgraphtheorem}
Given two Banach spaces $(V_1, \|\cdot\|_{V_1}, \mathbb{F}), (V_2, \|\cdot\|_{V_2}, \mathbb{F})$ and a (not necessarily bounded) linear operator $T : V_1 \longrightarrow V_2$, the following are equivalent:
\begin{enumerate}
    \item T is closed
    \item T is bounded
\end{enumerate}
\end{theorem}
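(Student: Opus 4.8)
The plan is to prove the two implications separately, observing that $(2) \Rightarrow (1)$ is elementary while $(1) \Rightarrow (2)$ carries the genuine content and will force an appeal to the Open Mapping (equivalently, Bounded Inverse) Theorem. First I would dispatch $(2) \Rightarrow (1)$. Assuming $T$ is bounded, hence continuous, I take any sequence $\{(x_n, Tx_n)\}_{n \in \mathbb{N}}$ in the graph that converges in the graph norm $\|\cdot\|_{\mathcal{G}}$ to some $(x,y) \in V_1 \times V_2$; this means $x_n \to x$ in $V_1$ and $Tx_n \to y$ in $V_2$. Continuity forces $Tx_n \to Tx$, and uniqueness of limits in the Banach space $V_2$ gives $y = Tx$, so $(x,y) = (x, Tx)$ lies in the graph. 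Thus the graph is closed.

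For $(1) \Rightarrow (2)$, the key idea is to realise the graph $G(T) := \{(x, Tx) \mid x \in V_1\}$ as a Banach space and then exploit completeness on both sides. Since $V_1$ and $V_2$ are complete, the product $(V_1 \times V_2, \|\cdot\|_{\mathcal{G}})$ is complete; as $T$ is closed, $G(T)$ is a closed subspace, hence itself a Banach space. I would then introduce the two coordinate projections $\pi_1 : G(T) \to V_1$, $(x, Tx) \mapsto x$, and $\pi_2 : G(T) \to V_2$, $(x, Tx) \mapsto Tx$. Both are linear and bounded, since $\|x\|_{V_1}, \|Tx\|_{V_2} \leq \|(x, Tx)\|_{\mathcal{G}}$, and because $T$ is defined on all of $V_1$, the map $\pi_1$ is a bijection.

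The crux, and the step I expect to be the main obstacle, is upgrading the bounded bijection $\pi_1$ to one with a \emph{bounded} inverse: this is exactly where the completeness of both spaces becomes indispensable and where Baire category enters through the Open Mapping Theorem. Applying that theorem to $\pi_1$ yields a bounded inverse $\pi_1^{-1} : V_1 \to G(T)$. I would then conclude by writing $T = \pi_2 \circ \pi_1^{-1}$, a composition of bounded linear maps, which is therefore bounded, with operator norm controlled by $\|\pi_2\|_{op}\,\|\pi_1^{-1}\|_{op}$.

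An equivalent route, closer in spirit to this paper's handling of closed operators via graph norms (as in the treatment of unbounded normal operators above), is to equip $V_1$ with the norm $\|x\|_{V_1} + \|Tx\|_{V_2}$, note that closedness of $T$ makes this a Banach norm, and apply the Bounded Inverse Theorem to the identity map from this graph norm onto the original norm of $V_1$; boundedness of the inverse then produces a constant $C$ with $\|Tx\|_{V_2} \leq C\|x\|_{V_1}$. Either way, the only nontrivial ingredient is the Open Mapping Theorem; the rest is bookkeeping, which is presumably why the statement is cited directly from \LaxClosedGraphThm.
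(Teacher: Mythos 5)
Your proposal is correct and matches the standard proof: the paper does not prove this theorem at all, but states it as a background result cited from \LaxClosedGraphThm, and the argument given there is essentially the one you outline --- the easy implication via continuity and uniqueness of limits, and the hard implication by making the graph $G(T)$ a Banach space and applying the Open Mapping (Bounded Inverse) Theorem to the coordinate projection $\pi_1$. Both of your routes (the projections on $G(T)$, or the graph norm on $V_1$) are valid, and your identification of the Open Mapping Theorem as the only nontrivial ingredient is exactly right.
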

\begin{theorem}[The Spectral Theorem for Self-Adjoint Operators \LaxSpecThm]
Given a Hilbert space $(H,\mathbb{F})$ and a self-adjoint operator $T \in \mathrm{B}(H)$, there exists a unique projection-valued measure $E$ on $(\sigma(T), Bor(\sigma(T))$ such that:
$$T = \int\limits_{\sigma(T)}\lambda \,dE(\lambda)$$

In this case, $E$ is called the resolution of the identity for $T$.
\end{theorem}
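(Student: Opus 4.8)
The plan is to route the construction through the continuous functional calculus, convert it into scalar measures via the Riesz--Markov representation theorem, and then upgrade those measures to a projection-valued measure using the sesquilinear-form correspondence already established in this chapter. First I would build the continuous functional calculus $\Phi^{C}_T : \mathrm{C}_{\mathbb{F}}(\sigma(T)) \to \mathrm{B}(H)$. Starting from polynomials, one sets $p(T) := \sum_k a_k T^k$ and proves the isometry $\|p(T)\|_{op} = \sup_{\lambda \in \sigma(T)} |p(\lambda)|_{\mathbb{F}}$; for self-adjoint $T$ this rests on the spectral radius coinciding with the operator norm, together with the polynomial spectral mapping theorem $\sigma(p(T)) = p(\sigma(T))$. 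Since $\sigma(T)$ is compact (as $T$ is bounded), Stone--Weierstrass gives density of polynomials in $\mathrm{C}_{\mathbb{F}}(\sigma(T))$, so this isometric $*$-homomorphism extends uniquely to all of $\mathrm{C}_{\mathbb{F}}(\sigma(T))$, preserving both $\|f(T)\|_{op} = \|f\|_{\infty}$ and multiplicativity $\widehat{fg}(T) = f(T)g(T)$.

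Next, fixing $x, y \in H$, the assignment $f \mapsto \inner{f(T)x}{y}_H$ is a bounded linear functional on $\mathrm{C}_{\mathbb{F}}(\sigma(T))$ of norm at most $\|x\|_H\|y\|_H$. By the Riesz--Markov theorem there is a unique regular Borel measure $\mu_{x,y}$ on $\sigma(T)$ with $\inner{f(T)x}{y}_H = \int_{\sigma(T)} f \, d\mu_{x,y}$. For a bounded Borel function $g$, the map $(x,y) \mapsto \int_{\sigma(T)} g \, d\mu_{x,y}$ is sesquilinear and bounded by $\|g\|_{\infty}\|x\|_H\|y\|_H$, so the Corollary on sesquilinear forms and bounded operators yields a unique operator $g(T) \in \mathrm{B}(H)$ with $\inner{g(T)x}{y}_H = \int_{\sigma(T)} g \, d\mu_{x,y}$. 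Taking $g = \mathds{1}_{\Omega}$ for $\Omega \in \mathrm{Bor}(\sigma(T))$ then defines the candidate $E(\Omega) := \mathds{1}_{\Omega}(T)$.

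Then I would verify the projection-valued-measure axioms. Self-adjointness $E(\Omega)^* = E(\Omega)$ along with $E(\sigma(T)) = I$ and $E(\emptyset) = 0$ are immediate; idempotence $E(\Omega)^2 = E(\Omega)$ and multiplicativity $E(\Omega_1 \cap \Omega_2) = E(\Omega_1)E(\Omega_2)$ follow from the multiplicativity of the Borel calculus, and countable additivity holds in the strong operator topology via a bounded-convergence argument on the measures $\mu_{x,y}$. Applying the identity symbol $\lambda \mapsto \lambda$ recovers $\inner{Tx}{y}_H = \int_{\sigma(T)} \lambda \, d\mu_{x,y} = \inner{\left(\int_{\sigma(T)} \lambda \, dE(\lambda)\right)x}{y}_H$ for all $x, y \in H$, whence $T = \int_{\sigma(T)} \lambda \, dE(\lambda)$. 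For uniqueness, any competing PVM $E'$ produces scalar measures $\mu'_{x,y}$ agreeing with $\mu_{x,y}$ on every polynomial, so by density of polynomials and the uniqueness clause of Riesz--Markov they coincide, forcing $E' = E$.

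The hard part will be the foundational isometry $\|p(T)\|_{op} = \|p\|_{\infty}$ on $\sigma(T)$ --- equivalently the fact that the spectral radius equals the operator norm for self-adjoint operators, combined with the polynomial spectral mapping theorem --- since every downstream step (the Stone--Weierstrass extension, the Riesz--Markov identification, and the multiplicativity that drives the projection identities) depends on it. The secondary difficulty is promoting multiplicativity from continuous to bounded Borel symbols and upgrading countable additivity to the strong operator topology, both of which require a careful dominated/bounded-convergence argument on the $\mu_{x,y}$ rather than a purely formal manipulation.
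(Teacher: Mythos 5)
The paper does not actually prove this theorem: it is imported as a cited result from Lax (Theorem~9$'$, p.~363), exactly as the Closed Graph Theorem is, so there is no internal argument to compare yours against. Your outline is the standard textbook construction --- polynomial calculus plus the norm identity $\|p(T)\|_{op} = \|p\|_{\infty}$ on $\sigma(T)$, Stone--Weierstrass extension to $\mathrm{C}_{\mathbb{F}}(\sigma(T))$, Riesz--Markov to produce the scalar measures $\mu_{x,y}$, the sesquilinear-form correspondence (which the paper does prove, as a corollary of Riesz representation, so invoking it is legitimate) to define $E(\Omega) := \mathds{1}_{\Omega}(T)$, and then the verification of the projection-valued-measure axioms --- and it is essentially sound. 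Three details deserve the care you promise them: (i) for a complex polynomial $p$, the operator $p(T)$ is normal rather than self-adjoint, so the isometry step needs ``spectral radius equals norm'' for \emph{normal} operators, or a reduction to real polynomials; (ii) countable additivity in the strong operator topology is most cleanly obtained not from bounded convergence alone but from weak additivity of the $\mu_{x,y}$ combined with the mutual orthogonality of the ranges $E(\Omega_n)$, which makes $\sum_n E(\Omega_n)x$ norm-convergent by a Pythagoras--Bessel argument; (iii) your uniqueness step tacitly uses that finite Borel measures on the compact metric space $\sigma(T)$ are automatically regular, so that the uniqueness clause of Riesz--Markov applies to the measures generated by a competing projection-valued measure. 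None of these is a gap in the plan; they are the expected technical burden of the standard proof, and your own assessment of where the difficulty lies is accurate.
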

\begin{definition}[The Essential Range]
    Given a Hilbert space $(H,\mathbb{F})$, a self-adjoint operator $A \in \mathrm{B}(H)$, its (unique) resolution at the identity $E$, the \textbf{essential range} of a Borel-measurable function $f \in \mathrm{Borel}_{\mathbb{F}}(\sigma(A))$ is defined as follows:
    $$ess\,ran_A(f):= \left\{\lambda \in \mathbb{C} \mid \text{for all } \epsilon > 0: E(f^{-1}(B(\lambda,\epsilon))  \neq 
 0\right\}$$
\end{definition}
\begin{theorem}[Complex Measure of Bounded Variation]
Given a Hilbert space $(H,\mathbb{F})$, a self-adjoint operator $A \in \mathrm{B}(H)$ and its (unique) resolution at the identity $E$, we have:
$$\text{For all } x,y \in H: \mu_{x,y}(\cdot):=\inner{E(\cdot)x}{y}_H \text{ is a complex measure of bounded total variation}.$$
\end{theorem}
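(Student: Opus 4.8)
The plan is to verify the two defining requirements separately: first that $\mu_{x,y}$ is a genuine complex measure (countably additive, vanishing on $\emptyset$), and then that its total variation over $\sigma(A)$ is finite. Fix $x,y \in H$. For the measure part I would lean entirely on the fact that the resolution of the identity $E$ is, as a projection-valued measure, countably additive in the strong operator topology: for pairwise disjoint Borel sets $\{S_n\}_{n \in \mathbb{N}}$ in $\sigma(A)$ one has $E\left(\bigcup_n S_n\right)x = \sum_n E(S_n)x$ with convergence in the norm of $H$. Since $\inner{\cdot}{y}_H$ is a bounded, hence continuous, linear functional on $H$, I can pass it through the convergent series to get $\mu_{x,y}\left(\bigcup_n S_n\right) = \sum_n \mu_{x,y}(S_n)$. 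Combined with $\mu_{x,y}(\emptyset) = \inner{E(\emptyset)x}{y}_H = 0$, this establishes that $\mu_{x,y}$ is a complex measure.

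The heart of the argument is the bound on the total variation. Given an arbitrary finite Borel partition $\{A_i\}_{i=\overline{1,n}}$ of $\sigma(A)$, each $E(A_i)$ is an orthogonal projection, so it is self-adjoint and idempotent, giving $E(A_i)^* E(A_i) = E(A_i)$ and therefore $\mu_{x,y}(A_i) = \inner{E(A_i)x}{y}_H = \inner{E(A_i)x}{E(A_i)y}_H$. Applying the Cauchy–Schwarz inequality termwise yields $|\mu_{x,y}(A_i)|_{\mathbb{F}} \leq \|E(A_i)x\|_H \|E(A_i)y\|_H$. Summing over $i$ and invoking the discrete Cauchy–Schwarz inequality, I would obtain
$$\sum_{i=1}^n |\mu_{x,y}(A_i)|_{\mathbb{F}} \leq \left(\sum_{i=1}^n \|E(A_i)x\|_H^2\right)^{1/2}\left(\sum_{i=1}^n \|E(A_i)y\|_H^2\right)^{1/2}.$$
The last ingredient is a Pythagorean identity: because the $A_i$ are disjoint and cover $\sigma(A)$, the projections $E(A_i)$ have mutually orthogonal ranges with $\sum_i E(A_i) = E(\sigma(A)) = I$, so $\sum_i \|E(A_i)x\|_H^2 = \sum_i \inner{E(A_i)x}{x}_H = \inner{x}{x}_H = \|x\|_H^2$, and likewise for $y$ (both being finite sums, no convergence issue arises). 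This gives $\sum_i |\mu_{x,y}(A_i)|_{\mathbb{F}} \leq \|x\|_H \|y\|_H$ for every partition, and taking the supremum produces $|\mu_{x,y}|(\sigma(A)) \leq \|x\|_H \|y\|_H < \infty$.

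I expect the main obstacle to be the bounded-variation step rather than the countable additivity. The crucial manoeuvre is to exploit idempotency and self-adjointness to rewrite $\inner{E(A_i)x}{y}_H$ symmetrically as $\inner{E(A_i)x}{E(A_i)y}_H$; without this, the naive bound $|\mu_{x,y}(A_i)|_{\mathbb{F}} \leq \|E(A_i)x\|_H \|y\|_H$ fails to remain uniform, since $\sum_i \|y\|_H$ blows up as the partition refines. It is precisely the two applications of Cauchy–Schwarz (pointwise, then discrete) together with the Pythagorean identity that keep the estimate bounded by $\|x\|_H\|y\|_H$ independently of the partition. The countable-additivity step is comparatively routine, the only care needed being the justification for interchanging the continuous functional $\inner{\cdot}{y}_H$ with the strongly convergent series.
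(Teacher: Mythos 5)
Your proof is correct, and on the countable-additivity half it coincides with the paper's argument: both pass the continuous functional $\inner{\cdot}{y}_H$ through the strongly convergent series $E\left(\bigsqcup_n B_n\right)x = \sum_n E(B_n)x$, the paper justifying this via the same Cauchy--Schwarz bound you invoke. The difference is in the bounded-variation estimate, and it matters. The paper applies Cauchy--Schwarz only once, getting $|\mu_{x,y}(A_i)|_{\mathbb{F}} \leq \|E(A_i)x\|_H\|y\|_H$, and then asserts $\sum_{i=1}^n \|E(A_i)x\|_H\|y\|_H \leq \|x\|_H\|y\|_H$. That last inequality is false in general: orthogonality of the ranges only gives the Pythagorean identity $\sum_i \|E(A_i)x\|_H^2 = \|x\|_H^2$, while the unsquared sum $\sum_i \|E(A_i)x\|_H$ can be as large as $\sqrt{n}\,\|x\|_H$. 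Concretely, take $A = \mathrm{diag}(0,1)$ on $\mathbb{F}^2$, $x = y = (1,1)/\sqrt{2}$, and the partition $\{0\},\{1\}$ of $\sigma(A)$: then $\sum_i \|E(A_i)x\|_H\|y\|_H = \sqrt{2} > 1 = \|x\|_H\|y\|_H$. Your manoeuvre -- using self-adjointness and idempotency to rewrite $\mu_{x,y}(A_i) = \inner{E(A_i)x}{E(A_i)y}_H$, then Cauchy--Schwarz termwise, then discrete Cauchy--Schwarz, then the Pythagorean identity for both $x$ and $y$ -- is exactly what makes the estimate uniform over partitions, and it is the standard correct argument. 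So your proposal is not merely a different route to the same bound $|\mu_{x,y}|(\sigma(A)) \leq \|x\|_H\|y\|_H$: it repairs a genuine gap in the paper's own proof of the bounded-variation step.
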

\begin{proof}
    We will begin by fixing $x,y \in H$.
\medskip

\textbf{Complex Measure:}
\begin{enumerate}
    \item $\mu_{x,y}(\emptyset) = \inner{0}{y}_H = 0$
    \item For all countable collection of disjoint Borel sets $\{B_i\}_{i \in \mathbb{N}}\subseteq Bor(\sigma(A))$ we have:
    $$\mu_{x,y}\left(\bigsqcup\limits_{i=1}^{\infty}B_i\right) = \inner{E\left(\bigsqcup\limits_{i=1}^{\infty}B_i\right)x}{y}_H = \sum\limits_{i=1}^{\infty}\inner{E(B_i)x}{y}_H = \sum\limits_{i=1}^\infty\mu_{x,y}(B_i)$$
\end{enumerate}
The second equality is easily justified by using the properties of a projection-valued measure and the Cauchy-Schwarz inequality:
$$\hspace{-0.48cm}\lim\limits_{n \to \infty}\left|\inner{\sum\limits_{i=1}^nE(B_i)x - E\left(\bigsqcup\limits_{i=1}^{\infty}B_i\right)x}{y}\right|_{\mathbb{F}}  \leq \lim\limits_{n \to \infty}\left\|\left(\sum\limits_{i=1}^nE(B_i) - E\left(\bigsqcup\limits_{i=1}^{\infty}B_i\right)\right)x\right\|_H\|y\|_H = 0 $$ 

This proves: $$\lim\limits_{n \to \infty}\sum\limits_{i=1}^n\inner{E(B_i)x}{y}_H = \inner{E\left(\bigsqcup\limits_{i=1}^{\infty}B_i\right)x}{y}_H$$
\medskip

\textbf{Bounded Variation:}

We fix an arbitrary disjoint collection of Borel sets partitioning $\sigma(A):\{A_i\}_{i=\overline{1,n}}$.

Some direct inequalities yield:
\begin{align*}
    \sum\limits_{i=1}^n\left|\mu_{x,y}(A_i)\right|_{\mathbb{F}} =  \sum\limits_{i=1}^n\left|\inner{E(A_i)x}{y}_H\right|_{\mathbb{F}} \leq \sum\limits_{i=1}^n \left\|E(A_i)x\right\|_H\|y\|_H \leq ||x||_H\|y\|_H
\end{align*}
Since the partition was arbitrarily chosen, we conclude:
$$|\mu_{x,y}|\left(\sigma(A)\right) \leq \|x\|_H \|y\|_H < \infty \text{ for all } x,y \in H$$
This proves boundedness, finishing off the proof.
\medskip

 We will now formalise in complete generality what one means by a functional calculus associated to an operator $T$. 
\end{proof}

\begin{definition}[Functional Calculus]
Let $(\mathscr{F}^*(X,\mathbb{F}), \mathbb{F})$ be a $*$-algebra of $\mathbb{F}$-valued functions on some set $X$ where the involution is given by complex conjugation.
Given the unital Banach algebra $(\mathrm{B}(H), \|\cdot\|_{op}, \mathbb{F})$, and $T$ in $\mathrm{B}(H)$, we say $T$ admits a \textbf{functional calculus} if there exists a $*$-algebra homomorphism between $\mathscr{F}^*(X,\mathbb{F})$ and $\mathrm{B}(H)$. Concretely, there exists a map:

\[
  \Phi_T\colon
  \mathscr{F}^*(X,\mathbb F)
  \;\longrightarrow\;
  \mathrm{B(H)},
  \quad
  f\;\longmapsto\;f(T)
\]
such that for all $f_1,f_2,g \in \mathscr{F}^*(X,\mathbb{F})$ and $\alpha_1,\alpha_2 \in \mathbb{F}$:
\begin{enumerate}[label=\arabic*)]
\item $\Phi_T(\alpha_1 f_1 + \alpha_2f_2) = \alpha_1\Phi_T(f_1) + \alpha_2 \Phi_T(f_2)$
\item $\Phi_T(f_1f_2) = \Phi_T(f_1)\Phi_T(f_2)$
\item $\Phi_T(\overline{g}) = (\Phi_T(g))^*$
\item $\Phi_T(1_{ \mathscr{F}^*(X,\mathbb F)}) = 1_{\mathrm{B}(H)}$
\end{enumerate}
\end{definition}
\medskip

One usually endows  $\mathscr{F}^*(X,\mathbb{F})$ with a norm, under which the $*$-algebra becomes a Banach space as well. This is because one can then extend the functional calculus to a bigger function space, in which the initial space is dense. This is precisely how one extends the polynomial functional calculus to the continuous functional calculus. In this regard, the construction will be analogous. We first identify a dense subspace of $\mathrm{Reg}_{\mathbb{F}}([a,b])$, define our functional calculus on this smaller space, and then extend it to the larger one.
\chapter{Regulated Functions} \label{ch: regulatedfunctions}
In this chapter, we will set up the space in which our functional calculus will be defined. By setting up, we mean characterising the functions by an equivalent definition. Through the established equivalence and properties, we will have an easier time constructing our functional calculus. We may begin with one of the definitions of a regulated function.
\begin{definition}[Regulated Functions]
Given a compact set $K \subseteq \mathbb{R}$, a function is \textbf{regulated} if all left and right limits exist on $K$.
\end{definition}

We now wish to have not one, but three equivalent ways of recognising a regulated function. 
\begin{theorem}[Equivalent Definitions for Regulated Functions]
The following statements are equivalent:
\begin{enumerate}
    \item $f \in \mathrm{Reg}_{\mathbb{F}}(K)$
    \item There exists a sequence of step functions $\{s_n\}_{n \in \mathbb{N}}$ on $K$ such that  $s_n
      \xrightarrow[\;n\to\infty\;]{\|\cdot\|_\infty}
      f$
    \item The set of all discontinuities of $f$ is at most countable, consisting only of jump and removable ones.
\end{enumerate}
\end{theorem}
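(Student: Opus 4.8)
The plan is to run the cycle $(1)\Rightarrow(2)\Rightarrow(3)\Rightarrow(1)$, concentrating essentially all of the work in the first implication and keeping the other two short. Throughout, one-sided limits are understood relative to $K$, so that at isolated points of $K$ and at the endpoints $\min K,\max K$ the relevant limits are either trivial or one-sided only.

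For $(1)\Rightarrow(2)$, I would fix $\epsilon>0$ and exploit regularity pointwise. Since $f\in\mathrm{Reg}_{\mathbb{F}}(K)$, at every $t\in K$ both limits $f(t^-),f(t^+)$ exist, so I can choose a radius $\gamma(t)>0$ with $|f(s)-f(t^-)|<\epsilon$ for all $s\in(t-\gamma(t),t)\cap K$ and $|f(s)-f(t^+)|<\epsilon$ for all $s\in(t,t+\gamma(t))\cap K$. This $\gamma$ is precisely a gauge function, and the intervals $(t-\gamma(t),t+\gamma(t))$ form an open cover of the compact set $K$; extracting a finite subcover (equivalently, producing a gauge-fine tagged partition $\widehat{P}_{\gamma}=\{(t_i,C_i,\gamma)\}_{i=\overline{1,n}}$) chops $K$ into finitely many cells with $C_i\subset(t_i-\gamma(t_i),t_i+\gamma(t_i))\cap K$. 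I then define a step function $s_{\epsilon}$ that on each cell $C_i$ takes the value $f(t_i^-)$ to the left of the tag $t_i$, the value $f(t_i)$ at $t_i$, and $f(t_i^+)$ to the right of $t_i$; by the choice of $\gamma$ this yields $\|f-s_{\epsilon}\|_{\infty}\le\epsilon$. Taking $\epsilon=1/n$ gives the uniformly convergent sequence $\{s_n\}_{n\in\mathbb{N}}$ required in (2).

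For $(2)\Rightarrow(3)$, I would first observe that each step function $s_n$ has only finitely many discontinuities, so $D:=\bigcup_{n\in\mathbb{N}}\mathrm{Disc}_{\mathbb{F}}(s_n,K)$ is at most countable. A standard $\epsilon/3$ estimate shows that a uniform limit of functions all continuous at a point $x\notin D$ is continuous at $x$, whence $\mathrm{Disc}_{\mathbb{F}}(f,K)\subseteq D$ is at most countable. The same $\epsilon/3$ argument, applied to one-sided approaches, transfers the Cauchy criterion for one-sided limits from the $s_n$ (which are piecewise constant and hence regulated) to $f$, so all one-sided limits of $f$ exist on $K$; consequently no discontinuity is essential, and each must be either a jump ($f(y^-)\neq f(y^+)$) or removable ($f(y^-)=f(y^+)\neq f(y)$).

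Finally, $(3)\Rightarrow(1)$ is immediate from the definitions: jump and removable discontinuities are defined precisely by the existence of both one-sided limits, and at continuity points those limits coincide with $f(y)$; hence all one-sided limits exist on $K$, i.e. $f\in\mathrm{Reg}_{\mathbb{F}}(K)$. I expect the single genuine obstacle to be the construction in $(1)\Rightarrow(2)$: the subtlety lies not in defining the gauge but in the bookkeeping that turns a finite gauge-fine partition into one well-defined step function, controlling its value at each tag and across the cell boundaries, and in verifying that the relative-to-$K$ one-sided limits behave correctly at isolated points and at the extreme points of $K$.
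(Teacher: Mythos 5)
Your proposal is correct, and while your $(1)\Rightarrow(2)$ construction (gauge from one-sided limits, compactness, step values $f(t_i^-),f(t_i),f(t_i^+)$ around each tag) and your $(3)\Rightarrow(1)$ observation match the paper's argument essentially verbatim, your treatment of countability of discontinuities is a genuinely different and leaner route. The paper proves $1\iff 2$ first, and then for the countability claim it stratifies the discontinuities of $f$ by jump size into sets $\mathrm{D}_n$ of oscillation at least $\tfrac{1}{n}$, shows by a quantitative $\tfrac{1}{5n}$-estimate that every point of $\mathrm{D}_n$ forces a discontinuity of a \emph{single} fixed approximant $s_N$ nearby, and builds disjoint intervals realizing an injection $\theta_n\colon \mathrm{D}_n \to \mathrm{Disc}_{\mathbb{F}}(s_N,K)$; since a step function has only finitely many discontinuities, each $\mathrm{D}_n$ is finite and the union is countable. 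You instead use the soft inclusion $\mathrm{Disc}_{\mathbb{F}}(f,K)\subseteq \bigcup_{n}\mathrm{Disc}_{\mathbb{F}}(s_n,K)$, which follows from the standard $\epsilon/3$ fact that a uniform limit of functions continuous at a point is continuous there; countability is then immediate as a countable union of finite sets, and the absence of essential discontinuities comes from transferring the one-sided Cauchy criterion from the $s_n$ to $f$ (which is exactly the paper's $2\Rightarrow 1$ argument, reused inside your $(2)\Rightarrow(3)$). Your route buys brevity and avoids the paper's delicate interval bookkeeping (choice of $\rho_x$, disjointness, injectivity); the paper's route buys a quantitatively sharper conclusion that is not demanded by the statement, namely that for each $n$ the set of discontinuities of oscillation at least $\tfrac{1}{n}$ is finite, not merely countable. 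Both are complete proofs of the stated equivalence.
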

\begin{proof}
We will prove the equivalence by 
considering the following implications:
\begin{figure}[h]
\centering
\begin{tikzpicture}[%
    every node/.style={font=\sffamily,inner sep=1pt},
    twohead/.style={<->,thick,gray!70},
    single/.style={->,thick,gray!70}
  ]
  \node (n1) at (0,1.5) {1};
  \node (n2) at (-1,0)  {2};
  \node (n3) at ( 1,0)  {3};

  \draw[twohead] (n1) -- (n2);   
  \draw[single]  (n2) -- (n3);   
  \draw[single]  (n3) -- (n1);   
\end{tikzpicture}
\end{figure}
\newpage
For $1 \iff 2$, we first consider a regulated function $f\in \mathrm{Reg}_{\mathbb{F}}(K)$.

For each $x \in K$, we fix $\epsilon >0$ and choose some threshold $\delta_x > 0$ such that:
$$|f(t) - f(x^+))|_{\mathbb{F}}< \epsilon \text{ as } t \in (x,x+\delta_x) \cap K$$
$$|f(t) - f(x^-)|_{\mathbb{F}}< \epsilon \text{ as } t \in (x-\delta_x,x) \cap K$$

Now, we look at the subspace topology of $K$.
$$\text{Let } I_x : = (x - \delta_x, x+\delta_x) \cap K $$
By Heine-Borel, this open cover admits a finite open subcover $\{I_{x_i}\}_{i =\overline{1,n}}$.  

We will consider $$E =\left\{x_i - \delta_{x_i}, x_i, x_i + \delta_{x_i} \mid i = \overline{1,n}\right\}$$ By taking the complement, we obtain $\mathbb{R}\setminus E = \{(a_i,b_i)\}_{i=\overline{1,m}}$, giving rise to the following partition of $K$:
$$ \text{For all }  i = \overline{1,m}: C_i = K \cap (a_i,b_i) \,\,\, \text{and} \,\,\,   i =\overline{m+1, m+n}: C_i = \left\{p_i\right\}  \text{ for } p_i \in K \cap E$$
We are now ready to construct our step function: 
$$ 
s(t) = 
\begin{cases}
f(p_i), &\text{for }\,\,t =p_i\\
f(x^-_i),&\text{for }\,\, t \in C_i \subseteq K \cap (x_i - \delta_{x_i}, x_i) \\
f(x^+_i),&\text{for }\,\, t \in C_i\subseteq K \cap (x_i, x_i + \delta_{x_i})
\end{cases}
$$

By exhausting all possibilities:
\begin{enumerate}
    \item $|f(p_i) - s(p_i)|_{\mathbb{F}}$ = 0
    \item $|f(t) - s(t)|_{\mathbb{F}} = |f(t) - f(x^+_i)|_{\mathbb{F}} < \epsilon$ as $t \in (x_i - \delta_{x_i}, x_i)$
    \item $|f(t) - s(t)|_{\mathbb{F}} = |f(x) - f(x^-_i)|_{\mathbb{F}} < \epsilon$ as $t \in (x_i , x_i + \delta_{x_i})$
\end{enumerate}
we get $\| f - s\|_\infty < \epsilon \text{ for all } \epsilon > 0.$ 

This proves the forward direction.
\medskip

For the reverse direction, we suppose that our function $f$ is uniformly approximated by a sequence of step functions $\{s_n\}_{n \in \mathbb{N}}$: 
$$\text{For all } \epsilon > 0 \text{ there exists } N \in \mathbb{N} \text{ such that for all } n\geq N: \|f -s_n\|_{\infty}<\frac{\epsilon}{2}$$
As $s_N$ is a step function, it can be made constant on each disjoint component of the partition $\{K \cap (a_i,b_i)\mid i =\overline{1,n}\}$. 
\newpage 
By letting $\mathrm{Disc}_{\mathbb{F}}(s_N, K)$ be the set of discontinuities of $s_n$, we will consider two cases:
\begin{enumerate}
    \item $x \in \mathrm{Disc}_{\mathbb{F}}(s_N, K)$
    \item $x \notin \mathrm{Disc}_{\mathbb{F}}(s_N, K)$
\end{enumerate}
If $x \in \mathrm{Disc}_{\mathbb{F}}(s_N, K)$, we choose $\delta'_x > 0$ such that $(x, x+\delta'_x) \cap \mathrm{Disc}_{\mathbb{F}}(s_N, K) = \emptyset$.

If $x \notin \mathrm{Disc}_{\mathbb{F}}(s_N, K)$, there exists $a,b \in\mathrm{Disc}_{\mathbb{F}}(s_N, K)$ such that $ x = (a,b)$. By picking $C = K \cap (a,b)$, we get $\restr{s_N}{C} \equiv c_x$, a constant. Moreover, to ensure that $s_n$ remains constant in some neigbourhood: $(x, x+ \delta''_x)$, we let:
$$\delta''_x = \frac{1}{2}\min\left\{|x - a|_{\mathbb{R}}, |x - b|_{\mathbb{R}}\right\}$$
such that:
$$\restr{s_N}{K \cap (x, x+\delta''_x)} \equiv c_x$$
Choosing $\delta_x = \min\{\delta_x',\delta_x''\}$ gives us:
$$ \restr{s_N}{K \cap (x, x + \delta_x)} = c_x$$
By considering two arbitrary elements $t,u \in (x, x + \delta_x)$ we get the following inequality:
\begin{align*}
    |f(t) - f(u)|_{\mathbb{F}} 
    &= |\left(f(t) - s_N(t)\right) +\left(s_N(t) - s_N(u)\right) + \left(s_N(u) - f(u)\right)|_{\mathbb{F}}\\
    &\leq |f(t) - s_N(t)|_{\mathbb{F}} + |s_N(t) - s_N(u))|_{\mathbb{F}} + |s_N(u) - f(u)|_{\mathbb{F}}\\
    &\leq \frac{\epsilon}{2} + \frac{\epsilon}{2}\\
    &=\epsilon
\end{align*}
Here, we note that the difference between the two simple functions cancels out.

This inequality implies that any sequence $\{y_n\}_{n \in \mathbb{N}} \subseteq K$ such that $y_n  \xrightarrow[\;n\to\infty\;]{|\cdot|_{\mathbb{F}}}x^+$ will yield a Cauchy sequence  $\{f(y_n)\}_{n \in \mathbb{N}}$. 

As $(\mathbb{F}, |\cdot|_{\mathbb{F}}, \mathbb{F})$ is a Banach space, we have $f(y_n)  \xrightarrow[\;n\to\infty\;]{|\cdot|_{\mathbb{F}}} f(x^+)$ and $f(x^+)\in \mathbb{F}$.

The existence of left limits is treated analogously. This proves the reverse direction.

For $(1\iff 2) \implies 3$, let $f \in \mathrm{Reg}_{\mathbb{F}}(K)$, which is uniformly approximable by the following sequence of step functions $\{s_n\}_{n \in \mathbb{N}}$. Let $N \in \mathbb{N}$ such that for all $n  \geq N :\|f - s_N\|_{\infty} < \frac{1}{5n}$.
By definition of a regulated function, it is straightforward to show that $f$ admits only removable and jump discontinuities. Hence: 
$$\mathrm{Disc}_{\mathrm{F}}(f, K) = \bigcup_{n=1}^\infty\left\{x \in K \mid  \max\{|f(x) - f(x^-)|_{\mathbb{F}},|f(x) - f(x^+) |_{\mathbb{F}}, |f(x^-) - f(x^+)|_{\mathbb{F}} \geq \frac{1}{n}\right\}$$
For the sake of brevity, let:
$$\mathrm{D}_n: = \left\{x \in K \mid  \max\{|f(x) - f(x^-)|_{\mathbb{F}},|f(x) - f(x^+) |_{\mathbb{F}}, |f(x^-) - f(x^+)|_{\mathbb{F}} \geq \frac{1}{n}\right\}$$
Fix $x \in \mathrm{D}_n$. If $x$ is a jump discontinuity, we choose $\delta_x > 0$ such that:
$$|f(t) - f(x^+)|_{\mathbb{F}} < \frac{1}{5n} \quad \text{for all } t \in(x, x+\delta_x) \cap K$$
$$|f(u) - f(x^-)|_{\mathbb{F}} < \frac{1}{5n} \quad \text{for all } u \in(x - \delta_x, x) \cap K$$
But then:
\begin{align*}
    |s_N(t) - s_N(u)|_{\mathbb{F}} 
    &\geq |f(t) - f(u)|_{\mathbb{F}} - |s_N(t) - f(t)|_{\mathbb{F}} - |s_n(u) - f(u)|_{\mathbb{F}}\\
    &\geq |f(x^+) - f(x^-)|_{\mathbb{F}} - |f(u) - f(x^-)|_{\mathbb{F}} - |f(t) - f(x^+)|_{\mathbb{F}} - \frac{2}{5n}\\
    &\geq \frac{1}{5n} 
\end{align*}
This implies: $\mathrm{Disc}_{\mathbb{F}}(s_N,K) \cap ( x - \delta_x, x + \delta_x) \neq \emptyset$.

If $x$ is a removable discontinuity, set $L = f(x^-) = f(x^+)$. 

Then:
$$|f(x) - L|_{\mathbb{F}} > \frac{1}{n}$$
By choosing $\delta_x > 0$ such that:
$$|f(t) - L|_{\mathbb{F}} < \frac{1}{5n} \quad \text{for all } t \in K \cap \left(( x - \delta_x, x + \delta_x) \setminus \{x\}\right)$$
we assume (for contradiction) that $s_N$ has no discontinuities in $K \cap (x - \delta_x, x + \delta_x)$. 

Set $\restr{s_N}{K \cap (x - \delta_x, x + \delta_x)} \equiv c_x$. Then, for any $t \in K \cap (x - \delta_x, x + \delta_x)$:
$$|c_x - L|_{\mathbb{F}} \leq |c_x - s_N(t)|_{\mathbb{F}} + |s_N(t) - f(t)|_{\mathbb{F}} + |f(t) - L|_{\mathbb{F}} \leq \frac{2}{5n}$$
$$|f(x) - c_x|_{\mathbb{F}} \geq |f(x) - L |_{\mathbb{F}} + |L - c_x|_{\mathbb{F}} > \frac{1}{n} - \frac{2}{5n} > 0$$
This contradicts the fact that $f$ is uniformly approximable by step functions.

Hence: $\mathrm{Disc}_{\mathbb{F}}(s_N, K) \cap (x - \delta_x, x + \delta_x) \neq \emptyset$. This proves the following fact:
$$\text{For all } n\in \mathbb{N} \text{ and }x \in \mathrm{D}_n, \text{ there exists } (x - \delta_x, x + \delta_x): \mathrm{Disc}_{\mathbb{F}}(s_N, K) \cap (x - \delta_x, x + \delta_x) \neq \emptyset$$
For any $x \in \mathrm{D}_n$, we choose:
$$\rho_x : = \min\left\{\delta_x, \frac{1}{2}d\left(x, \mathrm{Disc}_{\mathbb{F}}(s_N, K) \setminus \{x\}\right)\right\}$$
This threshold gives us a family of disjoint intervals $\{(x - \rho_x, x + \rho_x) \mid 
 x \in \mathrm{D}_n\}$ such that:
$$\text{For all } x \in \mathrm{D}_n: (x - \rho_x, x + \rho_x)\cap \mathrm{D}_{\mathbb{F}}(s_N, K) \neq \emptyset$$
This implies that the following sequence of maps has every term injective:
$$\theta_n : \mathrm{D}_n \longmapsto \mathrm{Disc}_{\mathbb{F}}(s_N, K), \quad x \longmapsto (x -\rho_x, x+\rho_x) \cap \mathrm{Disc}_{\mathbb{F}}(s_N, K)$$
This finishes the forward direction.

For $3 \implies (1\iff 2)$, we simply observe that unpacking the two definitions of a jump and a removable discontinuity gives us the definition of a regulated function.  \\ This proves our equivalent formulations.
\end{proof}
\bigskip

We now move on to two important properties of regulated functions, which will be used later on.

\begin{proposition}[Boundedness]
    Any regulated function $f\in \mathrm{Reg}_{\mathbb{F}}(K)$ is bounded.
\end{proposition}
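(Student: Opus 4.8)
The plan is to leverage the equivalence theorem just established, specifically the characterisation of a regulated function as a uniform limit of step functions. The guiding principle is elementary: a uniform limit of bounded functions is bounded, and step functions are manifestly bounded since they assume only finitely many values on a finite partition.

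First I would invoke the implication $(1 \iff 2)$ to produce a sequence of step functions $\{s_n\}_{n\in\mathbb{N}}$ on $K$ with $s_n \xrightarrow{\|\cdot\|_\infty} f$. Then I would fix the tolerance $\epsilon = 1$ and select an index $N$ such that $\|f - s_N\|_\infty < 1$.

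Next I would observe that $s_N$, being a step function subordinate to a finite partition $\{C_i\}_{i=\overline{1,n}}$ of $K$, takes at most finitely many values; hence $M := \|s_N\|_\infty = \max_{i=\overline{1,n}}|c_i|_{\mathbb{F}} < \infty$, where $c_i$ denotes the constant value of $s_N$ on the cell $C_i$. A single triangle-inequality estimate then closes the argument: for every $x \in K$,
$$|f(x)|_{\mathbb{F}} \leq |f(x) - s_N(x)|_{\mathbb{F}} + |s_N(x)|_{\mathbb{F}} < 1 + M,$$
so $f$ is bounded by $1 + M$ on all of $K$.

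There is no serious obstacle here; the only point requiring a moment's care is the claim that a step function is bounded, which follows immediately from the finiteness of its range. As an alternative route that bypasses the equivalence theorem, one could argue directly by compactness: the existence of both one-sided limits at each $x \in K$ yields a neighbourhood $I_x := (x - \delta_x, x + \delta_x) \cap K$ on which $f$ is bounded, these $I_x$ form an open cover of $K$, Heine--Borel extracts a finite subcover $\{I_{x_j}\}_{j=\overline{1,m}}$, and the maximum of the finitely many local bounds furnishes a global bound. I would nonetheless favour the step-function argument for its brevity, since the machinery is already in place.
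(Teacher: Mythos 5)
Your proof is correct, but it takes a different route from the paper. The paper proves boundedness directly from the definition: for each $x \in K$ the existence of $f(x^-)$ and $f(x^+)$ gives a neighbourhood $U_x = (x-\delta_x, x+\delta_x)$ on which $|f|_{\mathbb{F}}$ is bounded by $M_x = \max\{|f(x)|_{\mathbb{F}}, |f(x^+)|_{\mathbb{F}}+1, |f(x^-)|_{\mathbb{F}}+1\}$; Heine--Borel extracts a finite subcover and the maximum of the finitely many $M_{x_i}$ bounds $\|f\|_\infty$. This is precisely the ``alternative route'' you sketch in your final paragraph and then set aside. Your preferred argument instead leans on the equivalence theorem: pick a step function $s_N$ with $\|f - s_N\|_\infty < 1$, bound $s_N$ by the maximum of its finitely many values, and conclude by the triangle inequality. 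Both are valid, and there is no circularity in yours, since the paper's proof of the implication $1 \Rightarrow 2$ of the equivalence theorem nowhere assumes $f$ is bounded. The trade-off: your argument is shorter once the equivalence theorem is in hand, but it imports that heavier result as a dependency; the paper's argument is self-contained, relying only on the one-sided-limit definition and compactness, so the proposition remains available even in a development where the equivalence theorem has not yet been (or is not) proved.
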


\begin{proof}
We consider an arbitrary regulated function $f\in \mathrm{Reg}_{\mathbb{F}}(K)$. By definition, for any fixed $x \in K$, we can choose some $\delta_x > 0$ such that:
$$|f(t)-f(x^-)|_{\mathbb{F}} < 1 \quad \text{ for all } t\in K \cap (x - \delta'_x, x)$$
$$|f(t) - f(x^+)|_{\mathbb{F}} <1 \quad \text{ for all } t \in K \cap (x, x+\delta''_x)$$
Since the values of $f$ on $K \cap (x - \delta_x, x + \delta_x)$ are bounded by:
$$M_x: = \max\left\{|f(x)|_{\mathbb{F}}, |f(x^+)|_{\mathbb{F}} + 1, |f(x^-)|_{\mathbb{F}} + 1\right\}$$
we get:
$$\text{For every } x \in K, \text{there exists } U_x:= (x-\delta_x, x +\delta_x) \text{ such that } |\restr{f}{U_x}|_{\mathbb{F}} \leq M_x$$
As $\{U_x \mid x \in K\}$ forms an open cover of $K$, we can select a finite open subcover $\left\{U_{x_i}\mid i= \overline{1,n}\right\}$. By taking $M = \max\limits_{i=1,n}M_{x_i}$, we conclude:
$$\|f\|_{\infty} \leq M$$
This proves the boundedness property.
\end{proof}
\newpage
\begin{proposition}[Borel-Measurability] Any regulated function $f \in \mathrm{Reg}_{\mathbb{F}}(K)$ is \\$(Bor(K),Bor(\mathbb{F}))$-measurable.
\end{proposition}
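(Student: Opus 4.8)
The plan is to leverage the equivalence established in the preceding theorem: every $f \in \mathrm{Reg}_{\mathbb{F}}(K)$ is the uniform limit of a sequence of step functions $\{s_n\}_{n \in \mathbb{N}}$. Since measurability is preserved under pointwise limits, and uniform convergence is in particular pointwise convergence, it suffices to show that each step function $s_n$ is $(\mathrm{Bor}(K), \mathrm{Bor}(\mathbb{F}))$-measurable. The argument then reduces to two standard facts from measure theory.

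First I would verify that each step function is measurable. By construction, any step function on $K$ is a finite linear combination $s_n = \sum_{i=1}^{m} c_i \mathbf{1}_{C_i}$, where the $C_i$ are $K$-cells, i.e.\ sets of the form $K \cap J$ for some interval $J \subseteq \mathbb{R}$, and, by the standing assumption, the sets $C_i$ are mutually disjoint. Each interval $J$ is Borel in $\mathbb{R}$, so $C_i = K \cap J \in \mathrm{Bor}(K)$, recalling that the Borel $\sigma$-algebra of the subspace $K$ consists precisely of the sets $B \cap K$ with $B \in \mathrm{Bor}(\mathbb{R})$. Hence each indicator $\mathbf{1}_{C_i}$ is measurable, and a finite linear combination of measurable functions is measurable; thus $s_n$ is $(\mathrm{Bor}(K), \mathrm{Bor}(\mathbb{F}))$-measurable.

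Second, I would invoke the classical result that the pointwise limit of a sequence of measurable functions is measurable. Since $s_n \to f$ uniformly, in particular $s_n(x) \to f(x)$ for every $x \in K$; applying the limit result (valid for both $\mathbb{F} = \mathbb{R}$ and $\mathbb{F} = \mathbb{C}$, the latter by identifying $\mathbb{C}$ with $\mathbb{R}^2$ and handling real and imaginary parts separately) yields that $f$ is $(\mathrm{Bor}(K), \mathrm{Bor}(\mathbb{F}))$-measurable, which is the claim.

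I do not expect a serious obstacle here: the only point demanding a little care is the identification of $\mathrm{Bor}(K)$ with the trace $\sigma$-algebra $\{B \cap K \mid B \in \mathrm{Bor}(\mathbb{R})\}$, which guarantees that the $K$-cells are genuinely Borel in the subspace sense and hence that the indicators are measurable. An alternative route, bypassing step functions, would exploit the third equivalent characterization: $f$ is continuous outside an at most countable set $D$ of jump and removable discontinuities, so one may decompose $f$ into a piece agreeing with $f$ off the Borel set $D$ and a piece supported on the countable, hence Borel, set $D$. The step-function approach is cleaner, so that is the one I would carry out.
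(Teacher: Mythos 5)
Your proof is correct, but it follows a genuinely different route from the paper. You use the second characterization of regulated functions (uniform approximability by step functions): each step function is measurable because its $K$-cells lie in the trace $\sigma$-algebra $\mathrm{Bor}(K)=\{B\cap K \mid B \in \mathrm{Bor}(\mathbb{R})\}$, and then you invoke the classical theorem that a pointwise (a fortiori uniform) limit of measurable functions is measurable. The paper instead works from the third characterization — the one you sketch as your ``alternative route'' at the end — decomposing $K$ as $\mathrm{Cont}_{\mathbb{F}}(f,K)\sqcup\mathrm{Disc}_{\mathbb{F}}(f,K)$, noting that the discontinuity set is at most countable (hence Borel, with all its subsets Borel), that $\restr{f}{\mathrm{Cont}_{\mathbb{F}}(f,K)}$ is continuous, and then checking directly that $f^{-1}(U)$ splits into a relatively open piece intersected with the continuity set and an at most countable piece. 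Your approach buys modularity: it reduces everything to two standard facts from measure theory and would work verbatim for any class of functions closed under uniform limits of simple functions, with no analysis of the discontinuity structure needed. The paper's approach buys self-containedness and transparency: it avoids appealing to the limit theorem for measurable functions and instead exhibits concretely, set by set, why the preimage of an open set is Borel, which also showcases the countability of the discontinuity set — a property the paper reuses elsewhere. Both proofs are complete; the only care point in yours, which you correctly flag, is the identification of $\mathrm{Bor}(K)$ with the trace $\sigma$-algebra, and in the complex case the reduction to real and imaginary parts, which you also handle.
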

\begin{proof}
Let $f \in \mathrm{Reg}_{\mathbb{F}}(K)$. We consider partitioning $K$ as follows:
$$K = \mathrm{Cont}_{\mathbb{F}}(f,K) \sqcupscaled[1.2]{0pt} \mathrm{Disc}_{\mathbb{F}}(f,K)$$
As the set of discontinuities of a regulated function is at most countable, we have that both $\mathrm{Cont}_{\mathbb{F}}(f, K)$   and  $\mathrm{Disc}_{\mathbb{F}}(f, K)$ are in $Bor(K)$. Additionally, the restriction of our function to the set of continuous points $\restr{f}{\mathrm{Cont}_{\mathbb{F}}(f, K)}$ is a continuous function. This means that for every open set  $U\subseteq \mathbb{F}$ there exists some open set (with respect to the subspace topology) $V \subseteq K$ such that:
$$\left(\restr{f}{\mathrm{Cont}_{\mathbb{F}}(f, K)}\right)^{-1}\left(U\right) = V \cap\mathrm{Cont}_{\mathbb{F}}(f, K)$$
Moreover, we arrive at the following set-theoretical identity:
$$\left(\restr{f}{\mathrm{Cont}_{\mathbb{F}}(f, K)}\right)^{-1}\left(U\right) = \left\{x \in \mathrm{Cont}_{\mathbb{F}}(f, K)\mid f(x) \in U \right\} = f^{-1}(U) \cap \mathrm{Cont}_{\mathbb{F}}(f,[a,b])$$
We now check the measurability condition:
$$\text{For any open set } U \subseteq F: f^{-1}(U) = \left(f^{-1}(U) \cap \mathrm{Cont}_{\mathbb{F}}(f, K)\right) \,\,\sqcupscaled[1.2]{0pt}\,\, \left(f^{-1}(U) \cap \mathrm{Disc}_{\mathbb{F}}(f, K)\right) $$
By analysing the two disjoint sets, we observe:
$$f^{-1}(U) \cap \mathrm{Cont}_{\mathbb{F}}(f,K) = \left(\restr{f}{\mathrm{Cont}_{\mathbb{F}}(f,K)}\right)^{-1}\left(U\right) \in Bor(K)$$
$$f^{-1}(U) \cap \mathrm{Disc}_{\mathbb{F}}(f,K) \subseteq \mathrm{Disc}_{\mathbb{F}}(f,K)$$
Since the set of discontinuities of our function is at most countable, so will the subset $f^{-1}(U) \cap \mathrm{Disc}_{\mathbb{F}}(f,K)$ be at most countable. This implies that it is also in $Bor(K)$.
Thus, our measurability test has been checked:
$$\text{For any open subset } U \subseteq \mathbb{F}: f^{-1}(U) \in Bor(K)$$
This proves measurability.
\end{proof}

\chapter{The Bounded Henstock-Kurzweil Functional Calculus}\label{ch:henstockkurzweilbounded}
In this chapter, we will provide a new construction for a particular functional calculus, verify its validity, prove that it is an extension of the continuous functional calculus, and extend the spectral mapping theorem to regulated functions.

We have chosen this name for our functional calculus simply because our construction mimics the way in which Ralph Henstock and Jaroslav Kurzweil have generalized the Riemann integral, leading to the broadest way of integrating functions to date. 

The reader who wants to see the construction of this integral can consult Robert Bartle's \enquote{A Modern Theory of Integration}\,\cite{Bartle}.

\bigskip
\bigskip 

Before we introduce our notion of integrability, we will need a sum that will approach the integral. For the rest of this chapter, we will fix an arbitrary linear, bounded, self-adjoint operator $A$.
\begin{definition}[The underlying complex measures $\mu_{x,y}$]
    Given a  Hilbert space $(H,\mathbb{F})$, a self-adjoint operator $A \in \mathrm{B}(H)$, and the (unique) projection-valued $E$ measure from the Borel functional calculus, we will associate to our \textbf{$\mu_{x,y}-HK$ sums} a family of complex measures $\{\mu_{x,y}\}_{(x,y)\in H\times H}$, defined as follows:
    $$\text{For all } x,y \in H \text{ and } I\in Bor(\sigma(A)):  \mu_{x,y}(I): = \inner{E(I)x}{y}_H$$

\end{definition}
\begin{definition}[The $\mu\text{-}HK$ Sum]
Given a Hilbert space $(H,\mathbb{F})$, a regulated function \\ $f \in \mathrm{Reg}_{\mathbb{F}}(\sigma(A))$, a gauge-fine tagged partition $\widehat{P}_{\gamma} := \left\{\left(t_i, I_i, \gamma\right)\mid i=\overline{1,n}\right\}$ of $\sigma(A)$, and the (unique) projection-valued measure $E$ from the Borel functional calculus , we define the \textbf{$\mu_{x,y}\text{-}HK$ sum} of $f$ as follows:
\begin{align*}
    \text{For all } x,y \in H : S(f,\mu_{x,y},\widehat{P}_\gamma)&:=\sum\limits_{i=1}^nf(t_i)\mu_{x,y}(I_i) \text{ where each } I_i \subseteq(t_i - \delta(t_i), t_i + \delta(t_i))\\
\end{align*}
\end{definition}
It is now natural to consider the continuous variant of this sum by letting our partition become arbitrarily fine. This is where we introduce the notion of $\mu_{x,y}\text{-}HK$ integrability.
\begin{definition}[$\mu\text{-}HK$ integrability]
A function $f:\sigma(A)\to\mathbb{F}$ is \textbf{$\mu_{x,y}\text{-}HK$-integrable} with integral $I_{x,y}(f):=(HK)\displaystyle\int\limits_{\sigma(A)}f(\lambda)d\mu_{x,y}$ if:
$$\left(\forall\epsilon >0\right)\left(\exists\gamma\text{ a gauge function}\right)\left(\forall x,y \in H\right)\left(\exists I_{x,y} \in \mathbb{F}\right)$$$$\left(\forall \text{gauge-fine, tagged partitons } \widehat{P_\gamma}\implies \left|S(f,\mu_{x,y},\widehat{P}_\gamma) - I_{x,y}(f)\right|_{\mathbb{F}} <\epsilon\right)$$
\end{definition}
To start our construction, we leverage the fact that every regulated function can be approximated uniformly by step functions. This means that we can begin our $\mu\text{-}HK$ functional calculus on step functions and then extend it to all regulated functions.

Let  us fix an arbitrary step function on $\sigma(A)$:
$$s = \sum\limits_{i=1}^na_i\mathds{1}_{C_i}$$
where:
$$\text{For all } i = \overline{1,n}: C_i = K \cap [p_{i-1},p_i)  \text{ with } \{p_i\}_{i = \overline{0,n}}  \, \text{ an exhaustive } K\text{-division} $$
We consider the following gauge function:
$$\gamma(t):= 
\begin{cases}
    \frac{1}{2}d(t, K \setminus C_i), &\text{for } t \in C_i: C_i \text{ not a singleton}\\
    \gamma(t): K\cap (t - \gamma(t), t + \gamma(t)) = \{t\}, &\text{for } t \in C_i =\{t\}
\end{cases}$$
Let $\widehat{P}_{\gamma}: =\left\{\left(t_j,C'_j, \gamma)\right)\mid j=\overline{1,n}\right\}$ be an arbitrarily chosen gauge-fine, tagged partition. By the way in which we have chosen our gauge function, each $J_j$ will lie entirely in one $C_i$. 

Hence: 
$$ s(t_j) = a_{i(j)} \text{ whenever } t_j \in C'_j \subseteq C_i$$
We will collect all the indices that are characterised by this inclusion property:
$$\Omega_{j,i}:= \{j \mid C'_j \subseteq C_{i}\}$$
One can prove with simple set-theoretic arguments that the following identity holds:
$$\bigcup\limits_{j \in \Omega_{j,i}} C'_j = C_i$$
We claim that for all $x,y \in H:I_{x,y}(s)=  \displaystyle\sum\limits_{i=1}^na_i\,\mu_{x,y}(C_i)$

By the definition of the $\mu_{x,y}\text{-}HK$ sum, we get:
\begin{align*}
    S(s, \mu_{x,y}, \widehat{P}_{\gamma}) 
    &= \sum \limits_{j=1}^ns(t_j)\,\mu_{x,y}(C'_j)\\
    &=\sum\limits_{i=1}^na_{i(j)}\left(\sum\limits_{j\in \Omega_{j,i}}\mu_{x,y}(C'_j)\right)\\
    &=\sum\limits_{i=1}^n a_{i(j)} \,\mu_{x,y}\left(\bigcup\limits_{j\in \Omega_{j,i}}C'_j\right)\\
    &=\sum\limits_{i=1}^n a_i\,\mu_{x,y}(C_i)\\
    &= I_{x,y}(s)
\end{align*}
This establishes the functional calculus for step functions.
\begin{corollary}[The $\mu\text{-}HK$ integral of step functions]
Given a Hilbert space $(H,\mathbb{F})$, a step function $s = \displaystyle\sum\limits_{i=1}^na_i\mathds{1}_{C_i}$ on $\sigma(A)$, and the (unique) projection-valued measure $E$ from the Borel functional calculus , its $\mu_{x,y}\text{-}HK$ integral is:
$$I_{x,y}(s)= \sum\limits_{i=1}^n c_i \, \mu_{x,y}(C_i) \text{ for all } x.,y \in H$$
\end{corollary}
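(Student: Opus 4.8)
The plan is to exhibit a single gauge $\gamma$ under which \emph{every} gauge-fine tagged partition produces the same $\mu_{x,y}\text{-}HK$ sum, namely $\sum_{i=1}^n a_i\,\mu_{x,y}(C_i)$. Once the sum is shown to be constant across all fine partitions, the difference appearing in the integrability criterion is identically zero, so $\mu_{x,y}\text{-}HK$ integrability is automatic and the integral equals that constant.

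First I would fix the gauge exactly as in the discussion preceding the statement: for $t$ in a non-degenerate cell $C_i$ set $\gamma(t) := \tfrac{1}{2}\,d(t, K \setminus C_i)$, and for a singleton cell $C_i = \{t\}$ pick $\gamma(t)$ small enough that $K \cap (t - \gamma(t), t + \gamma(t)) = \{t\}$. The purpose of this choice is confinement: for any gauge-fine tagged partition $\widehat{P}_\gamma = \{(t_j, C'_j, \gamma) \mid j = \overline{1,n}\}$, the defining constraint gives $C'_j \subseteq (t_j - \gamma(t_j), t_j + \gamma(t_j)) \cap K$, and since every point of $K \setminus C_i$ sits at distance at least $d(t_j, K \setminus C_i) > \gamma(t_j)$ from the tag $t_j \in C_i$, this ball meets $K$ only inside the single original cell $C_i$. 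Thus each refined cell lies entirely in exactly one $C_i$, and as $s$ is constant there, $s(t_j) = a_i$ whenever $t_j \in C'_j \subseteq C_i$.

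Next I would group the refined indices by the original cell they fall into, setting $\Omega_{j,i} := \{j \mid C'_j \subseteq C_i\}$, and record the tiling identity $\bigcup_{j \in \Omega_{j,i}} C'_j = C_i$ (the refined cells cover $K$ and are confined, so those assigned to $C_i$ partition it). Splitting the sum along these groups and collapsing each inner block by the finite additivity of the complex measure $\mu_{x,y}$ then gives
$$S(s,\mu_{x,y},\widehat{P}_\gamma) = \sum_{i=1}^n a_i \sum_{j \in \Omega_{j,i}} \mu_{x,y}(C'_j) = \sum_{i=1}^n a_i\,\mu_{x,y}\!\left(\bigcup_{j\in\Omega_{j,i}} C'_j\right) = \sum_{i=1}^n a_i\,\mu_{x,y}(C_i).$$
Because the right-hand side is independent of the chosen partition, the value $I_{x,y}(s) := \sum_{i=1}^n a_i\,\mu_{x,y}(C_i)$ meets the integrability criterion for every $\epsilon > 0$, proving the corollary.

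The step I expect to be the main obstacle is the confinement guarantee together with the tiling identity: one must verify that the gauge genuinely stops a refined cell from straddling two original cells — this is precisely why the factor $\tfrac{1}{2}$ and the distance $d(t, K \setminus C_i)$ enter — and that gauge-fine partitions exist at all, which rests on a Cousin-type covering argument for the compact set $K$. Given the finite additivity of $\mu_{x,y}$ (available from the earlier proof that $\mu_{x,y}$ is a complex measure), the remaining algebraic collapse is routine.
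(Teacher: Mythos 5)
Your proposal is correct and takes essentially the same route as the paper's own argument: the identical gauge (half the distance to $K \setminus C_i$, with singleton cells isolated), the same confinement of each refined cell $C'_j$ inside a single original cell, the same regrouping via $\Omega_{j,i}$ and tiling identity, and the same collapse by finite additivity of $\mu_{x,y}$. The only differences are cosmetic: you make explicit two points the paper leaves implicit, namely the distance inequality justifying confinement and the Cousin-type argument guaranteeing that gauge-fine partitions exist.
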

In here, we leverage the fact that the $\mu_{x,y}\text{-}HK$ integral of a step function can be seen as a bounded sesquilinear form on $(H,\mathbb{F})$ with respect to $x,y \in H$. As a consequence, there exists a unique bounded operator $s(A)$ such that:
$$\text{For  all } x,y\in H: \inner{s(A)x}{y}_H = I_{x,y}(s) $$
This remark will turn out to be crucial when verifying the axioms of a functional calculus.
\bigskip

We can now extend the functional calculus to all regulated functions through their uniform approximability by step functions. 
We begin by fixing an arbitrary regulated function $f \in \mathrm{Reg}_{\mathbb{F}}(\sigma(A))$ and a sequence of step functions $\{s_n\}_{n \in \mathbb{N}}$ converging uniformly to $f$. The choice of our gauge function will remain the same as the one we constructed for the simple functions. Without loss of generality, which will be apparent after the entire argument, we will fix $x,y \in \partial B(0,1) \subseteq H$. 
By considering an arbitrarily gauge-fine, tagged partition $\widehat{P}_{\gamma}: =\left\{\left(t_j,C_j, \gamma)\right)\mid j=\overline{1,n}\right\}$, we have:
\begin{align*}
    \hspace{-1cm}\left|S(f, \mu_{x,y}, \widehat{P}_{\gamma})  - \int\limits_{\sigma(A)}s_n(\lambda)\,d\mu_{z,y}(\lambda)\right|_{\mathbb{F}} 
    &= \left|S((f-s_n) + s_n, \mu_{x,y}, \widehat{P}_{\gamma}) - \int\limits_{\sigma(A)}s_n(\lambda)\, d\mu_{x,y}(\lambda)\right|_{\mathbb{F}} \\
    &=\left|\left(S(s_n, \mu_{x,y}, \widehat{P}_{\gamma}) - \int\limits_{\sigma(A)}s_n(\lambda)d\mu_{x,y}(\lambda)\right) + S(f-s_n, \mu_{x,y}, \widehat{P}_{\gamma}) \right |_{\mathbb{F}}   \\
    &\leq \left|(S(s_n, \mu_{x,y}, \widehat{P}_{\gamma}) - \int\limits_{\sigma(A)}s_n(\lambda)\,d\mu_{x,y}(\lambda) \right|_{\mathbb{F}}     + \left|S(f-s_n, \mu_{x,y}, \widehat{P}_{\gamma}) \right|_{\mathbb{F}}\\
    &\leq\|f -s_n\|_{\infty} \sum\limits_{i=1}^n\left|\mu_{x,y}(C_i)\right|_{\mathbb{F}}\\
    &\leq\|f -s_n\|_{\infty}\|x\|_H\|y\|_H\\
\end{align*}

By taking the limit as $n$ approaches infinity on  both sides, we will obtain the following result:
$$I_{x,y}(f) = \lim_{n \to \infty}\int\limits_{\sigma(A)}s_n(\lambda)\,d\mu_x,y(\lambda)$$
One can show that this limit value is well defined, in the sense that it does not depend on the sequence approaching it.

If either $x = 0_H$ or $y = 0_H$, we will have that the sum (and hence, the integral) evaluates to zero. Let us consider two arbitrary non-zero points $x,y \in H: x = \|x\|_H \, \hat{x}, \, y = \|y\|_H \, \hat{y}$, where $\hat{x}, \hat{y} \in \partial B(0,1)$.
By the sesquilinearity  of the $\mu_{x,y}\text{-}HK$ sum with respect to $x,y$, we get:
$$S(f, \mu_{x,y},\widehat{P}_{\gamma}) = \|x\|_H \|y\|_H S(f, \mu_{\hat{x},\hat{y}},\widehat{P}_{\gamma})$$
By an identical argument:
$$ I_{x,y}(f) = \|x\|_H \|y\|_H \,\, I_{\hat{x},\hat{y}}(f)$$
\newpage
We turn once again to the power of bounded, sesquilinear forms. As regulated functions are bounded, we will get, in particular: 
$$|I_{x,y}(f)|_{\mathbb{F}} \leq \|f\|_{\infty}\|x\|_H\|y\|_H < \infty$$
This shows that there exists a unique bounded, linear operator $f(A) \in \mathrm{B(H)}$ such that:
$$\text{For all } x,y \in H:\inner{f(A)x}{y}_H = I_{x,y}(f)$$
\begin{corollary}[The $\mu\text{-}HK$ integral of regulated functions]
Given a Hilbert space $(H,\mathbb{F})$, a regulated function $f\in \mathrm{Reg}_{\mathbb{F}}(\sigma(A))$, any sequence $\{s_n\}_{n \in \mathbb{N}}$ uniformly converging to $f$, and the (unique) projection-valued measure from $E$ the Borel functional calculus, the $\mu_{x,y}\text{-}HK$ integral of $f$ is:
$$I_{x,y}(f) = \lim\limits_{n \to \infty}\int\limits_{\sigma(A)}s_n(\lambda)\,d\mu_{x,y}(\lambda) \text{ for all } x,y \in H$$
\end{corollary}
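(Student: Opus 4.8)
The plan is to realise the claimed limit as the genuine $\mu_{x,y}\text{-}HK$ integral of $f$, following the estimates already assembled in the discussion preceding the statement. First I would fix $x,y \in \partial B(0,1) \subseteq H$, because on the unit sphere the bound from the Complex Measure of Bounded Variation theorem reads $|\mu_{x,y}|(\sigma(A)) \leq \|x\|_H\|y\|_H = 1$, and it is precisely this collapse to $1$ that will make every estimate below uniform in the pair $(x,y)$, matching the quantifier structure of $\mu_{x,y}\text{-}HK$ integrability.

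The first step is to show the limit exists. Since $\{s_n\}_{n\in\mathbb{N}}$ converges uniformly to $f$, it is uniformly Cauchy, and integration of a step function against the complex measure $\mu_{x,y}$ obeys
$$\left|\int_{\sigma(A)} s_n\,d\mu_{x,y} - \int_{\sigma(A)} s_m\,d\mu_{x,y}\right|_{\mathbb{F}} \leq \|s_n - s_m\|_\infty\,|\mu_{x,y}|(\sigma(A)) \leq \|s_n - s_m\|_\infty.$$
Completeness of $\mathbb{F}$ then yields a limit, which I will call $L_{x,y}$. To see it does not depend on the approximating sequence, I would compare two uniformly convergent sequences $\{s_n\}, \{s'_n\}$ via $\left|\int s_n\,d\mu_{x,y} - \int s'_n\,d\mu_{x,y}\right|_{\mathbb{F}} \leq \|s_n - s'_n\|_\infty \to 0$, since both tend uniformly to $f$.

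The second step identifies $L_{x,y}$ with $I_{x,y}(f)$. Given $\epsilon > 0$, I would pick $n$ so large that $\|f - s_n\|_\infty < \frac{\epsilon}{2}$ and $\left|\int_{\sigma(A)} s_n\,d\mu_{x,y} - L_{x,y}\right|_{\mathbb{F}} < \frac{\epsilon}{2}$, and take the gauge $\gamma$ to be exactly the one constructed earlier for the step function $s_n$, so that each cell of any $\gamma$-fine tagged partition lies in a single cell of $s_n$, forcing $S(s_n,\mu_{x,y},\widehat{P}_\gamma) = \int_{\sigma(A)} s_n\,d\mu_{x,y}$. Splitting $f = (f - s_n) + s_n$ and using the total variation bound gives, for every $\gamma$-fine partition,
$$\left|S(f, \mu_{x,y}, \widehat{P}_\gamma) - L_{x,y}\right|_{\mathbb{F}} \leq \|f - s_n\|_\infty + \left|\int_{\sigma(A)} s_n\,d\mu_{x,y} - L_{x,y}\right|_{\mathbb{F}} < \epsilon,$$
so $f$ is $\mu_{x,y}\text{-}HK$ integrable with $I_{x,y}(f) = L_{x,y}$. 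Finally, for arbitrary non-zero $x,y$ I would write $x = \|x\|_H\,\hat{x}$, $y = \|y\|_H\,\hat{y}$ with $\hat{x},\hat{y} \in \partial B(0,1)$ and transfer the identity off the sphere using the sesquilinearity of both the $\mu_{x,y}\text{-}HK$ sum and the linear integral in $(x,y)$, the degenerate cases $x=0_H$ or $y=0_H$ being trivial.

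The hard part will be the second step, and specifically keeping the gauge $\gamma$ independent of $(x,y)$: without care the error carries the factor $\|x\|_H\|y\|_H$, which would obstruct the single-gauge-for-all-pairs requirement of the definition. Restricting first to the unit sphere is exactly what removes this factor, and the homogeneity argument of the last step is what justifies the \enquote{without loss of generality} reduction.
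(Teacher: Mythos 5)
Your proposal is correct and follows essentially the same route as the paper: uniform approximation by step functions, reduction to unit vectors $x,y \in \partial B(0,1)$ via the total-variation bound $|\mu_{x,y}|(\sigma(A)) \leq \|x\|_H\|y\|_H$, the splitting $f = (f - s_n) + s_n$ together with the gauge adapted to step functions (so that $S(s_n,\mu_{x,y},\widehat{P}_\gamma)$ equals $\int_{\sigma(A)} s_n\,d\mu_{x,y}$), and the sesquilinear rescaling to general $x,y$. Your write-up is in fact slightly tighter than the paper's, since you explicitly prove existence of the limit via the Cauchy estimate, verify independence of the approximating sequence, and fix $n$ before choosing the gauge --- points the paper passes over with a remark or handles only implicitly.
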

In verifying the axioms of a functional calculus, we will use the following lemma:
\begin{lemma}[Lipschitz continuity]\label{lem:lipschitzcontinuity}
Given a Hilbert space $(H,\mathbb{F})$, two regulated functions $f_1, f_2\in \mathrm{Reg}_{\mathbb{F}}(\sigma(A))$, and the (unique) projection-valued measure $E$ from the Borel functional calculus , we have:
$$\|(f_1-f_2)(A)\|_{op} \leq \|f_1-f_2\|_{\infty}$$
\end{lemma}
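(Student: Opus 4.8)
The plan is to bypass any product or composition structure and treat the lemma as a direct consequence of the boundedness estimate already obtained during the construction of the functional calculus, combined with the dual characterisation of the Hilbert-space norm. The crucial observation is that the statement only involves the functional calculus applied to the \emph{single} regulated function $g := f_1 - f_2$; no linearity of $\Phi_A^{HK}$ is needed to interpret $(f_1-f_2)(A)$.

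First I would note that $\mathrm{Reg}_{\mathbb{F}}(\sigma(A))$ is a vector space, so $g = f_1 - f_2$ is itself regulated, and hence $(f_1-f_2)(A) = g(A)$ is a well-defined element of $\mathrm{B}(H)$ satisfying $\inner{g(A)x}{y}_H = I_{x,y}(g)$ for all $x,y \in H$. Next I would recall the estimate established immediately before the corollary on the $\mu\text{-}HK$ integral of regulated functions, namely that for any regulated $f$ one has $|I_{x,y}(f)|_{\mathbb{F}} \leq \|f\|_{\infty}\|x\|_H\|y\|_H$; applying this verbatim to $g$ gives
$$\left|\inner{g(A)x}{y}_H\right|_{\mathbb{F}} = |I_{x,y}(g)|_{\mathbb{F}} \leq \|g\|_{\infty}\|x\|_H\|y\|_H \quad \text{for all } x,y \in H.$$

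Then I would invoke the theorem on seeing norms induced by inner products to convert this sesquilinear bound into a norm bound. Fixing $x \in H$ and taking the supremum over unit vectors $y$ yields
$$\|g(A)x\|_H = \sup_{\|y\|_H = 1} \left|\inner{g(A)x}{y}_H\right|_{\mathbb{F}} \leq \|g\|_{\infty}\|x\|_H,$$
and a final supremum over unit vectors $x$ delivers $\|g(A)\|_{op} \leq \|g\|_{\infty}$, which upon substituting $g = f_1 - f_2$ is exactly the claim.

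I do not expect a genuine obstacle here: the lemma is essentially a repackaging of the boundedness inequality that the construction already produced. The only point requiring care is to be explicit that the estimate $|I_{x,y}(f)|_{\mathbb{F}} \leq \|f\|_{\infty}\|x\|_H\|y\|_H$, derived for a generic regulated $f$, applies without change to the difference $f_1 - f_2$, and that the dual-norm representation is precisely the tool that upgrades the pointwise sesquilinear estimate to the operator norm.
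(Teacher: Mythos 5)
Your proposal is correct and follows essentially the same route as the paper: note that $f_1-f_2$ is regulated, apply the already-established estimate $|I_{x,y}(f_1-f_2)|_{\mathbb{F}} \leq \|f_1-f_2\|_{\infty}\|x\|_H\|y\|_H$, and pass to the operator norm by taking suprema over unit vectors. The only cosmetic difference is that you convert the sesquilinear bound via the dual-norm characterisation in two supremum steps, whereas the paper cites the norm equality $\|T\|_{op}=\|\eta\|_{op}$ from the sesquilinear-form corollary; these are the same argument.
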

\begin{proof}
We fix $x,y\in H$. As $(\mathrm{Reg}_{\mathbb{F}}(\sigma(A), \cdot_{\mathrm{Reg}_{\mathbb{F}}(\sigma(A)},\mathbb{F})$ is an associative algebra with respect to function multiplication, we have, in particular, that the difference of regulated functions yields a regulated function. Hence:
$$|\inner{(f-g)(A)x}{y}_H|_{\mathbb{F}} \leq \|f-g\|_{\infty} \|x\|_H\|\|y\|_H$$
By the bijection between bounded sesquilinear forms and bounded, linear operators, we have:
$$\|(f-g)(A)\|_{op} = \sup_{\|x\|_H = \|y\|_H =1}\|\inner{(f-g)(A)x}{y}_H\|$$
Thus, by taking the supremum over all unit-normed $x,y \in H$, we get:
$$\|(f-g)(A)\|_{op} \leq \|f-g\|_{\infty}$$
This finishes the proof.
\end{proof}
\newpage
\begin{theorem}[The $HK$ Functional Calculus is a bona fide Functional Calculus]
Given the $*$-algebra of regulated functions $(\mathrm{Reg}_{\mathbb{F}}(\sigma(A)),\mathbb{F})$, the following map is a $*$-algebra homomorphism:
\[
  \Phi^{HK}_A\colon
  \mathrm{Reg}_{\mathbb{F}}(\sigma(A))
  \;\longrightarrow\;
  \mathrm{B(H)},
  \quad
  f\;\longmapsto\;f(A) 
\]
\end{theorem}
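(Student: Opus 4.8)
The plan is to verify the four functional-calculus axioms from the definition, exploiting throughout the bijection between bounded sesquilinear forms and operators: since $f(A)$ is the unique operator satisfying $\inner{f(A)x}{y}_H = I_{x,y}(f)$ for all $x,y \in H$, it suffices to check each identity at the level of the scalar integrals $I_{x,y}$ and then read off the operator equality by uniqueness. The architecture is two-layered. First I would establish each axiom for step functions, where the integral is the explicit finite sum $I_{x,y}(s) = \sum_i a_i \mu_{x,y}(C_i) = \sum_i a_i \inner{E(C_i)x}{y}_H$; since this holds for all $y$, uniqueness upgrades it to the operator identity $s(A) = \sum_i a_i E(C_i)$, which is the workhorse of the whole argument. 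Second, I would transport each axiom to general regulated $f$ by uniform approximation $s_n \to f$ together with \autoref{lem:lipschitzcontinuity}, which gives $\|s_n(A) - f(A)\|_{op} \le \|s_n - f\|_\infty \to 0$, i.e. operator-norm convergence.

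Linearity and unitality are immediate. The $\mu_{x,y}\text{-}HK$ sum $S(f,\mu_{x,y},\widehat{P}_\gamma) = \sum_i f(t_i)\mu_{x,y}(C_i)$ is linear in $f$, so $I_{x,y}$ is linear, and uniqueness gives axiom (1); passing to the limit preserves it. For axiom (4), the unit is the step function $\mathds{1}_{\sigma(A)}$, whose integral is $I_{x,y}(1) = \mu_{x,y}(\sigma(A)) = \inner{E(\sigma(A))x}{y}_H = \inner{x}{y}_H$ because $E(\sigma(A)) = 1_{\mathrm{B}(H)}$; hence $1(A) = 1_{\mathrm{B}(H)}$. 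For the $*$-axiom (3), on a step function $s = \sum_i a_i \mathds{1}_{C_i}$ I would compute $I_{x,y}(\overline{s}) = \sum_i \overline{a_i}\inner{E(C_i)x}{y}_H$ and compare it with $\overline{I_{y,x}(s)} = \sum_i \overline{a_i}\inner{x}{E(C_i)y}_H$; the two agree because each $E(C_i)$ is an orthogonal projection, hence self-adjoint. This yields $\overline{s}(A) = (s(A))^*$, and since $\overline{s_n} \to \overline{f}$ uniformly while the adjoint is an operator-norm isometry, the extension $\overline{f}(A) = (f(A))^*$ follows by matching limits term by term.

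The genuine content is multiplicativity (2). For step functions $s = \sum_i a_i \mathds{1}_{C_i}$ and $t = \sum_j b_j \mathds{1}_{D_j}$, the product $st = \sum_{i,j} a_i b_j \mathds{1}_{C_i \cap D_j}$ is again a step function, so the operator representation gives $(st)(A) = \sum_{i,j} a_i b_j E(C_i \cap D_j)$; on the other hand $s(A)t(A) = \sum_{i,j} a_i b_j E(C_i)E(D_j)$, and these coincide by the defining multiplicativity of the projection-valued measure, $E(C_i)E(D_j) = E(C_i \cap D_j)$. To reach arbitrary regulated $f,g$, I would pick uniform approximants $s_n \to f$, $t_n \to g$; since regulated functions are bounded, $\|s_n t_n - fg\|_\infty \le \|s_n\|_\infty\|t_n - g\|_\infty + \|g\|_\infty\|s_n - f\|_\infty \to 0$, so $s_n t_n \to fg$ uniformly and hence $(s_n t_n)(A) \to (fg)(A)$ in operator norm by \autoref{lem:lipschitzcontinuity}. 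Simultaneously, operator multiplication is jointly continuous, so $s_n(A)t_n(A) \to f(A)g(A)$; as the two sequences agree term by term, $(fg)(A) = f(A)g(A)$.

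The main obstacle is axiom (2): the crux is recognising that the scalar integral of a step function promotes to the clean operator representation $s(A) = \sum_i a_i E(C_i)$, after which the computation rests entirely on the multiplicativity $E(S)E(T) = E(S\cap T)$ of the spectral measure. The only subtlety in the passage to the limit is confirming that products of uniform approximants still converge uniformly, which is where the boundedness of regulated functions enters; everything else is a routine continuity argument built on \autoref{lem:lipschitzcontinuity}.
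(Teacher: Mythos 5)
Your proposal is correct, and its overall architecture coincides with the paper's: verify the four axioms, treat step functions first, and transport the identities to general regulated functions via uniform approximation together with \autoref{lem:lipschitzcontinuity}. There is, however, one point where your route genuinely diverges from the paper's, and it works in your favour. For multiplicativity the paper argues at the scalar level: it expands the product $\inner{s^{f_1}_n(A)x}{y}_H\inner{s^{f_2}_n(A)x}{y}_H = \sum_{i,j}b_ic_j\,\mu_{x,y}(I_i)\,\mu_{x,y}(J_j)$ and then replaces $\mu_{x,y}(I_i)\,\mu_{x,y}(J_j)$ by $\mu_{x,y}(I_i\cap J_j)$. That identity is false in general: taking $I_i = J_j = I$ it would force $\inner{E(I)x}{y}_H \in \{0,1\}$, and moreover a product of two inner products is not a sesquilinear form in $(x,y)$, so even a valid scalar identity of that shape could not be ``translated operatorially'' through the sesquilinear-form bijection. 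Your version avoids this entirely: you first promote the scalar formula for step functions to the operator representation $s(A) = \sum_i a_i E(C_i)$ (legitimate, since the scalar identity holds for \emph{all} $x,y$ and the representing operator is unique), and only then multiply operators, invoking $E(C_i)E(D_j) = E(C_i\cap D_j)$ at the operator level, where it is exactly the multiplicativity of the projection-valued measure. This is the correct repair of the paper's step, and the conclusion $s(A)t(A) = (st)(A)$ that the paper wants is recovered rigorously. You are also more careful in the limit passage: you justify $\|s_nt_n - fg\|_\infty \to 0$ via boundedness of regulated functions, which the paper uses implicitly without comment. Finally, your treatment of the adjoint axiom (step functions plus the fact that taking adjoints is an operator-norm isometry) differs mildly from the paper's direct computation with the relation $\mu_{x,y} = \overline{\mu_{y,x}}$ applied to general regulated $f$; both arguments are sound, the paper's being marginally shorter since it needs no approximation for that axiom.
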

\begin{proof}
We will verify the following axioms in the prescribed order:

For all $f_1,f_2,f_3 \in \mathrm{Reg}_{\mathbb{F}}(\sigma(A))$ and $\alpha_1,\alpha_2 \in \mathbb{F}$:
\begin{enumerate}[label=\arabic*)]
\item $\Phi^{HK}_A(\alpha_1 f_1 + \alpha_2f_2) = \alpha_1\Phi^{HK}_A(f_1) + \alpha_2 \Phi^{HK}_A(f_2)$
\item $\Phi^{HK}_A(f_1f_2) = \Phi^{HK}_A(f_1)\Phi^{HK}_A(f_2)$
\item $\Phi^{HK}_A(\overline{f_3}) = (\Phi_A^{HK}(f_3))^*$
\item $\Phi^{HK}_A(1_{\mathrm{Reg}_{\mathbb{F}}(\sigma(A))}) = 1_{\mathrm{B}(H)}$
\end{enumerate}
\bigskip
We start off by fixing three sequences of step functions: $$\left\{s^{f_1}_n\right\}_{n \in \mathbb{N}} \qquad \left\{s^{f_2}_n\right\}_{n \in \mathbb{N}} \qquad \left\{s^{f_3}_n\right\}_{n \in \mathbb{N}}$$
each one converging uniformly to the superscripted function. Throughout all the properties, we fix $x,y \in H$.

For $1)$, we just use the linearity of the sum, giving us immediately the result:
\begin{align*}
    \inner{(\alpha_1f_1+\alpha_2f_2)(A)x}{y}_H &= \int\limits_{\sigma(A)}(\alpha_1f_1 + \alpha_2f_2)(\lambda)\,d\mu_{x,y}(\lambda)\\
    &= \alpha_1\hspace{-0.3cm}\int\limits_{\sigma(A)}f_1(\lambda)\,d\mu_{x,y}(\lambda) + \alpha_2\hspace{-0.3cm}\int\limits_{\sigma(A)}f_2(\lambda)\,d\mu_{x,y}(\lambda)\\
    &= \alpha_1\inner{f_1(A)x}{y}_H + \alpha_2\inner{f_2(A)x}{y}_H
\end{align*}
This proves the first property.
\newpage

For $2)$, we will make use of our previously defined step functions.

Suppose:
$$ s^{f_1}_n = \sum\limits_{i=1}^{K_n}b_i \mathds{1}_{I_i} \qquad s_n^{f_2} = \sum\limits_{i=1}^{M_n}b_i \mathds{1}_{J_i} \qquad\text{ for all } n\in \mathbb{N}$$
Then: 
$$ \inner{s^{f_1}_n(A)x}{y}_H = \sum\limits_{i=1}^{K_n}b_i \mu_{x,y}(I_i) \qquad \inner{s_n^{f_2}(A)x}{y}_H = \sum\limits_{i=1}^{M_n}c_i 
\, \mu_{x,y}(J_i) \qquad\text{ for all } n\in \mathbb{N}$$
By using the multiplicativity of projection-valued measures: 
$$\text{For all }  B_1,B_2 \in Bor(\sigma(A)): E(A \cap B) = E(A)E(B)$$
we get:
\begin{align*}
     \inner{s^{f_1}_n(A)x}{y}_H\inner{s_n^{f_2}(A)x}{y}_H  
     &= \sum \limits_{i,j}b_i c_j \,\mu_{x,y}(I_i)\,\mu_{x,y}(J_j)\\
     &=\sum \limits_{i,j}b_i c_j \,\mu_{x,y}(I_i \cap J_j)\\
     &=\inner{\left(s^{f_1}_ns_n^{f_2}\right)(A)x}{y}_H
\end{align*}
This translates, operatorially speaking, to: $s_n^{f_1}(A) s_n^{f_2}(A) = \left(s_n^{f_1}s_n^{f_2}\right)(A)$ for all $n \in \mathbb{N}$.
Upon inspecting the convergence of both of these operator formulations, we get:
\begin{align*}
s_n^{f_1}(A) s_n^{f_2}(A)  &\xrightarrow[\;n\,\to\,\infty\;]{\|\cdot\|_{op}} f_1(A)f_2(A)\\
\left(s_n^{f_1} s_n^{f_2}\right)(A)  &\xrightarrow[\;n\,\to\,\infty\;]{\|\cdot\|_{op}} \left(f_1(A)f_2(A)\right)   
\end{align*}
As the two expressions on the left-hand side coincide, we get our final result:
$$\left(f_1 f_2\right)(A) = f_1(A)f_2(A)$$
This proves the second property.

\newpage
For $3)$, the property is easily derived once we establish the following remark. 
\begin{align*}
  \hspace{-2cm} \text{For all } B\in Bor(\sigma(A)), \text{ we have : } \mu_{x,y}(B)
    &= \inner{E(B)x}{y}_H \\
    &=\overline{\inner{y}{E(B)x}_H}\\
    &=\overline{\inner{E(B)y}{x}_H}\\
    &=\overline{\mu_{y,x}(B)}
\end{align*}
Hence:
\begin{align*}
    \inner{\overline{f}(A)x}{y}_H 
    &= \int\limits_{\sigma(A)}\overline{f(\lambda)}\,d\mu_{x,y}(\lambda)\\
    &= \int\limits_{\sigma(A)}\overline{f(\lambda)\,d\mu_{y,x}(\lambda)}\\
    &= \overline{\int\limits_{\sigma(A)}f(\lambda)\,d\mu_{y,x}(\lambda)}\\
    &= \overline{\inner{f(A)y}{x}}_H \\
    &= \inner{x}{f(A)y}_H \\
    &= \inner{(f(A))^*x}{y}_H 
\end{align*}
By using the bijection between bounded sesquilinear forms and bounded, linear operators, we get: $$(f(A))^* = \overline{f}(A)$$
This proves the third property. 

\bigskip
For $4)$, we remark that for any gauge-fine, tagged partition $\widehat{P}_{\gamma}:= \left\{\left(t_i, I_i, \gamma\right)\mid i=\overline{1,n}\right\}$ (where the gauge $\gamma$ is the same one we used for simple functions) we have:
$$I_{x,y}((1_{\mathrm{Reg}_{\mathbb{F}}(\sigma(A))}) = \sum\limits_{i=1}^n\mu_{x,y}(I_i) = \inner{E(\sigma(A)x}{y}_H = \inner{x}{y}_H = \inner{I(x)}{y}_H$$
Since $I \in \mathrm{B(H)}$ is also unique, by the bijection between bounded sesquilinear forms and bounded, linear operators,  we get:
$$1(A) = I$$
This proves the fourth property, establishing the fact that our construction is a bona fide functional calculus.
\end{proof}

\newpage
Since the space of regulated functions is strictly larger than the space of continuous functions, we will check if the $HK$ functional calculus agrees with the continuous functional calculus. If we prove this fact, then the $HK$ functional calculus can be seen as a well-defined extension of the smaller functional calculus.
\begin{theorem}[The \texorpdfstring{$HK$}{TEXT} functional calculus extends the continuous functional calculus]
Given a Hilbert space $(H,\mathbb{F})$, a self-adjoint operator $A \in \mathrm{B(H)}$, and the (unique) projection-valued measure $E$ from the Borel functional calculus, we get the following agreement on $\mathrm {C_{\mathbb{F}}}(\sigma(A))$:
$$\Phi^{HK}_A = \Phi^C_A $$
\end{theorem}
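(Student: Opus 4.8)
The plan is to exploit that $\Phi^{HK}_A$ and $\Phi^C_A$ are both continuous, unital $*$-homomorphisms on $\mathrm{C}_{\mathbb{F}}(\sigma(A))$ and to reduce their equality to agreement on a dense subalgebra. Since $A$ is self-adjoint, $\sigma(A) \subseteq \mathbb{R}$ is compact, so by the Weierstrass approximation theorem the polynomials are dense in $(\mathrm{C}_{\mathbb{F}}(\sigma(A)), \|\cdot\|_{\infty})$. Both maps are norm-continuous: $\Phi^{HK}_A$ by \autoref{lem:lipschitzcontinuity}, and $\Phi^C_A$ because the continuous functional calculus is isometric. A continuous $*$-homomorphism is determined by its values on a set of generators of a dense subalgebra, so it suffices to check that $\Phi^{HK}_A(1) = \Phi^C_A(1)$ and $\Phi^{HK}_A(\mathrm{id}) = \Phi^C_A(\mathrm{id})$, where $\mathrm{id}(\lambda) = \lambda$; linearity and multiplicativity then force agreement on every polynomial.

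The constant function is immediate: both calculi are unital, so $\Phi^{HK}_A(1) = I = \Phi^C_A(1)$ by property $4$ of the $HK$ calculus. The heart of the matter is the identity function. First I would show that for every regulated $f$ the $\mu_{x,y}\text{-}HK$ integral coincides with the Lebesgue--Stieltjes integral against $\mu_{x,y}$: taking step functions $s_n \to f$ uniformly, the estimate $\left| \int_{\sigma(A)} s_n \, d\mu_{x,y} - \int_{\sigma(A)} f \, d\mu_{x,y} \right|_{\mathbb{F}} \leq \|s_n - f\|_{\infty}\, |\mu_{x,y}|(\sigma(A))$ together with the bounded total variation $|\mu_{x,y}|(\sigma(A)) \leq \|x\|_H \|y\|_H$ (the theorem on complex measures of bounded variation) shows that $\int_{\sigma(A)} s_n \, d\mu_{x,y} \to \int_{\sigma(A)} f \, d\mu_{x,y}$, whence, by the defining limit of $I_{x,y}(f)$ (and its sequence-independence already noted in the construction), $I_{x,y}(f) = \int_{\sigma(A)} f \, d\mu_{x,y}$. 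Applying this to $f = \mathrm{id}$ and invoking the Spectral Theorem, which gives $\int_{\sigma(A)} \lambda \, d\mu_{x,y}(\lambda) = \inner{Ax}{y}_H$, I obtain $\inner{\Phi^{HK}_A(\mathrm{id})x}{y}_H = \inner{Ax}{y}_H$ for all $x,y \in H$. Since a bounded operator is determined by its sesquilinear form, $\Phi^{HK}_A(\mathrm{id}) = A = \Phi^C_A(\mathrm{id})$.

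Combining the two base cases, both homomorphisms agree on $1$ and on $\mathrm{id}$, hence on all polynomials by linearity and multiplicativity, and hence on all of $\mathrm{C}_{\mathbb{F}}(\sigma(A))$ by density and joint continuity, giving $\Phi^{HK}_A = \Phi^C_A$. (If one instead takes $\Phi^C_A = \restr{\Phi^B_A}{\mathrm{C}_{\mathbb{F}}(\sigma(A))}$, the identity $I_{x,y}(f) = \int_{\sigma(A)} f \, d\mu_{x,y}$ established above is already the claim, bypassing the density step entirely.)

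The main obstacle I anticipate is the identification $I_{x,y}(\mathrm{id}) = \inner{Ax}{y}_H$, that is, interchanging the uniform limit of step functions with the spectral integral. The quantitative control is supplied by the bounded total variation of $\mu_{x,y}$, so the interchange is legitimate; one must only be careful that the limit defining $I_{x,y}(f)$ is sequence-independent, so that it may be identified with the single value $\int_{\sigma(A)} f \, d\mu_{x,y}$. A secondary point requiring care is the density step: for $\mathbb{F} = \mathbb{C}$ one should justify it via Stone--Weierstrass, noting that the polynomials in the real variable $\lambda$ separate points of $\sigma(A)$ and are closed under conjugation, rather than relying on the real one-variable statement alone.
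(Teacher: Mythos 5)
Your proposal is correct, but it reaches the conclusion by a different route than the paper. The paper's proof fixes a continuous $f$, approximates it uniformly by step functions $s_n^f$, observes that the $HK$ and Borel calculi both send $s_n^f$ to the same operator $\sum_i a_i E(C_i)$, and then invokes Lipschitz continuity of \emph{both} calculi plus uniqueness of operator-norm limits to conclude $\Phi^{HK}_A(f) = \Phi^B_A(f) = \Phi^C_A(f)$; the identification of $\Phi^B_A$ with $\Phi^C_A$ on $\mathrm{C}_{\mathbb{F}}(\sigma(A))$ is taken as known. Your argument shares the same technical core --- the estimate $\bigl|\int s_n\,d\mu_{x,y} - \int f\,d\mu_{x,y}\bigr|_{\mathbb{F}} \leq \|s_n - f\|_{\infty}\,\|x\|_H\|y\|_H$ coming from bounded total variation --- but you use it to prove the cleaner intermediate identity $I_{x,y}(f) = \int_{\sigma(A)} f\,d\mu_{x,y}$ for every regulated $f$, and then pin down $\Phi^C_A$ by its abstract characterization: agreement on $1$ (unitality) and on $\mathrm{id}$ (via the Spectral Theorem, $\int \lambda\,d\mu_{x,y} = \inner{Ax}{y}_H$), propagation to polynomials by the homomorphism axioms, and extension to all of $\mathrm{C}_{\mathbb{F}}(\sigma(A))$ by Stone--Weierstrass density together with norm-continuity of both maps. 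What your route buys is self-containedness: you never need the external fact that the Borel calculus restricts to the continuous one, and you correctly flag the Stone--Weierstrass subtlety for $\mathbb{F} = \mathbb{C}$, which a polynomial-density argument genuinely requires. What the paper's route buys is brevity and a strictly stronger byproduct, namely $\Phi^{HK}_A = \Phi^B_A$ on all of $\mathrm{Reg}_{\mathbb{F}}(\sigma(A))$, which your parenthetical closing remark essentially recovers: your identity $I_{x,y}(f) = \int f\,d\mu_{x,y}$ \emph{is} that statement in sesquilinear-form disguise, so the two proofs converge once one adopts the restriction definition of $\Phi^C_A$.
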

\textbf{Proof:}
We leverage the fact that any continuous function on a compact set can be uniformly approximated by step functions. Let $f \in \mathrm {C_{\mathbb{F}}}(\sigma(A))$ and $\left\{s_n^f\right\}_{n \in \mathbb{N}}$ be the sequence of step functions uniformly approximating $f$. Since Lipschitz continuity holds in the Borel functional calculus as well, we have:
$$s_n^f  \xrightarrow[\;n\,\to\,\infty\;]{\|\cdot\|_{\infty}}  f \,\,\,\text{ implies } \,\,\, 
\begin{cases}
    \Phi^{HK}_A(s_n^f) \xrightarrow[\;n\,\to\infty\;]{\|\cdot\|_{op}}  \Phi^{HK}_A(f)\\
    \Phi^{B}_A(s_n^f) \xrightarrow[\;n\,\to\infty\;]{\|\cdot\|_{op}}  \Phi^{B}_A(f)
\end{cases}$$
Since limits are unique in Hilbert spaces, we get:
$$\Phi^{HK}_A(f) = \Phi^B_A(f) = \phi_A^C(f)$$
As $f \in \mathrm {C_{\mathbb{F}}}(\sigma(A))$  was arbitrarily chosen, we conclude:
$$\Phi^{HK}_A = \Phi^C_A \quad\text{ on } \,\mathrm {C_{\mathbb{F}}}(\sigma(A))$$
This proves the claim. 

Moreover, the more general fact can be proven by the exact same reasoning applied to approximating regulated functions by step functions:
$$\Phi^{HK}_A = \phi^{B}_A \quad\text{ on } \, \mathrm{Reg}_{\mathbb{F}}(\sigma(A))$$
This not only suggests that the Henstock-Kurzweil functional calculus is an extension of the continuous functional calculus, but it is precisely the Borel functional calculus restricted to the space of regulated functions.

\chapter{The Advantage of the \texorpdfstring{$HK$}{TEXT} Functional Calculus} \label{ch: advantageofhk}

In this chapter, we aim to provide the main advantage of the $HK$ functional calculus in contrast with the Borel functional calculus. By the defining property of regulated functions, every such function can be uniformly approximated by step functions. Combining this property with the Lipschitz continuity of the $HK$ functional calculus opens up the possibility of allowing operators to converge in the operator norm. As we will see, this is not always the case when examining functions in a larger space. 
\bigskip

We will begin by realising the space of regulated functions as a $\|\cdot\|_{\infty}$-closed subalgebra of $(\mathrm{Borel}_{b,\mathbb{F}}(\sigma(A)), \|\cdot\|_{\infty}, \mathbb{F})$.

We will first prove that $(\mathrm{Reg}_{\mathbb{F}}(\sigma(A)), \|\cdot\|_{\infty}, \mathbb{F})$ is a Banach space. 
\begin{theorem}{$(\mathrm{Reg}_{\mathbb{F}}(\sigma(A)), \|\cdot\|_{\infty}, \mathbb{F})$ is a Banach Space.}\end{theorem}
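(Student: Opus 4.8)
The plan is to realise $\mathrm{Reg}_{\mathbb{F}}(\sigma(A))$ as a $\|\cdot\|_{\infty}$-closed subspace of $(\mathrm{Borel}_{b,\mathbb{F}}(\sigma(A)), \|\cdot\|_{\infty}, \mathbb{F})$ and then invoke the standard fact that a closed subspace of a Banach space is itself a Banach space. By the Boundedness and Borel-Measurability propositions, every regulated function is bounded and Borel measurable, so the inclusion $\mathrm{Reg}_{\mathbb{F}}(\sigma(A)) \subseteq \mathrm{Borel}_{b,\mathbb{F}}(\sigma(A))$ holds and $\|\cdot\|_{\infty}$ is finite on $\mathrm{Reg}_{\mathbb{F}}(\sigma(A))$.

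First I would verify the structural facts: $\mathrm{Reg}_{\mathbb{F}}(\sigma(A))$ is a vector subspace of the ambient space, since closure under addition and scalar multiplication is immediate from the fact that one-sided limits respect these operations, and $\|\cdot\|_{\infty}$ restricts to a genuine norm on it. I would take as known the completeness of $(\mathrm{Borel}_{b,\mathbb{F}}(\sigma(A)), \|\cdot\|_{\infty}, \mathbb{F})$, which follows routinely from the completeness of $\mathbb{F}$ together with the fact that a uniform limit of measurable functions is measurable.

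The crux is closedness, namely that the uniform limit of regulated functions is again regulated. Let $\{f_n\}_{n \in \mathbb{N}} \subseteq \mathrm{Reg}_{\mathbb{F}}(\sigma(A))$ converge uniformly to some $f \in \mathrm{Borel}_{b,\mathbb{F}}(\sigma(A))$. I would fix a point $y$ and show the right limit $f(y^+)$ exists by an $\epsilon/3$ estimate: for $t,s$ lying to the right of $y$,
\[
|f(t) - f(s)|_{\mathbb{F}} \leq |f(t) - f_N(t)|_{\mathbb{F}} + |f_N(t) - f_N(s)|_{\mathbb{F}} + |f_N(s) - f(s)|_{\mathbb{F}},
\]
where the two outer terms are controlled by uniform convergence and the middle term is controlled because $f_N(y^+)$ exists. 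This forces $f(t)$ to be Cauchy as $t \to y^+$, so the right limit exists in the complete field $\mathbb{F}$; the left limit is treated symmetrically. Equivalently, I could bypass point-by-point limits and invoke the Equivalent Definitions theorem directly, producing a single sequence of step functions converging uniformly to $f$ by a diagonal argument over the step-function approximants guaranteed for each $f_n$, thereby exhibiting $f$ as regulated via condition $(2)$ of that theorem.

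I expect this uniform-limit step to be the sole genuine obstacle; the remaining assembly is purely formal. Once closedness is established, $\mathrm{Reg}_{\mathbb{F}}(\sigma(A))$ is a closed subspace of the Banach space $\mathrm{Borel}_{b,\mathbb{F}}(\sigma(A))$, hence complete in $\|\cdot\|_{\infty}$, and therefore a Banach space.
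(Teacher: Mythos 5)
Your proposal is correct and follows essentially the same route as the paper: both embed $\mathrm{Reg}_{\mathbb{F}}(\sigma(A))$ in the complete space $(\mathrm{Borel}_{b,\mathbb{F}}(\sigma(A)), \|\cdot\|_{\infty}, \mathbb{F})$, obtain a uniform limit $f$ there, and then prove $f$ is regulated by an $\epsilon/3$ estimate combining uniform convergence with the existence of one-sided limits of the approximants. The only cosmetic difference is that you verify the Cauchy criterion for $f(t)$ as $t \to y^+$, whereas the paper first extracts $\lim_n f_n(y^+)$ in $\mathbb{F}$ and shows $f(y^+)$ equals it; these are the same argument.
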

\begin{proof}
One can easily show that under the following operations:
\begin{align*}
    (f+g)(x)&: = f(x) + g(x)\\
    (\alpha f)(x)&:=\alpha f(x)
\end{align*}
The space of $\mathbb{F}$-valued regulated functions becomes a vector space.

To show completeness, we consider an arbitrary Cauchy sequence $\{f_n\}_{n \in \mathbb{N}}\subseteq \mathrm{Reg}_{\mathbb{F}}(\sigma(A))$. Since $(\mathrm{Borel}_{b,\mathbb{F}}(\sigma(A)), \|\cdot\|_{\infty}, \mathbb{F})$ is a Banach space, we have that our Cauchy sequence converges uniformly to some bounded, measurable function $f$.

We fix $\epsilon > 0$ and pick a threshold $N_1 \in \mathbb{N}$ such that:
$$\text{For all } n \geq N_1: \|f_n - f\|_{\infty} < \frac{\epsilon}{3} $$

What remains to be shown is that $f$ itself is regulated.
\newpage
Since the argument for the existence of right limits is the same as the argument for the existence of left limits, we will focus on proving only the former. 
By the fact that our sequence $\{f_n\}_{n \in \mathbb{N}}$ consists of regulated functions, we have:
 $$\left(\forall \epsilon > 0\right)\left(\exists \delta_x > 0\right)\left(\forall y \in K \cap (x, x+\delta_x) \implies |f_n(y) - f_n(x^+)|_{\mathbb{F}} < \frac{\epsilon}{3}\right)$$
 Since $\{f_n\}_{n \in \mathbb{N}}$ is Cauchy, $\{f_n(x^+)\}_{n \in \mathbb{N}}$ will be Cauchy in 
 $(\mathbb{F}, |\cdot|_{\mathbb{F}}, \mathbb{F})$. By the completeness of $(\mathbb{F}, |\cdot|_{\mathbb{F}}, \mathbb{F})$, we will have: 
 $$f_n(x^+) \xrightarrow[\,n \,\to\,\infty\,]{\,|\cdot|_{\mathbb{F}}\,} f(x^+) \,\,\text{ where } \,\, f(x^+)\in \mathbb{F}$$
 Concretely:
 $$\left(\forall \epsilon  > 0\right)\left(\exists N_2 \in \mathbb{N}\right)\left(\forall n \geq N_2  \implies |f_n(x^+) - f(x^+)|_{\mathbb{F}} < \frac{\epsilon}{3}\right)$$\
 Choosing $N_3: = \max\{N_1,N_2\}$ yields:
 \begin{align*}
     |f(y) - f(x^+)|_{\mathbb{F}} 
     &\leq   |f(y) - f_n(y)|_{\mathbb{F}} +   |f_n(y) - f_n(x^+)|_{\mathbb{F}} +   |f_n(x^+) - f(x^+)|_{\mathbb{F}}\\
     &< \frac{\epsilon}{3} + \frac{\epsilon}{3}  + \frac{\epsilon}{3}\\
     &= \epsilon
 \end{align*}
 This proves that $f(x^+) \in \mathbb{F}$, showing completeness of  $(\mathrm{Reg}_{\mathbb{F}}(\sigma(A)), \|\cdot\|_{\infty}, \mathbb{F})$. 
 
 With this, our proof is complete.
 \end{proof}
 \bigskip

We can now proceed with our isometric embedding. We note that our space of regulated functions is contained within the space of bounded, Borel-measurable functions. 

Hence, a natural choice of an isometric embedding is the canonical inclusion map.

\begin{proposition}$(\mathrm{Reg}_{\mathbb{F}}(\sigma(A)), \|\cdot\|_{\infty}, \mathbb{F})$ is isometrically embeddable into \\ $(\mathrm{Borel}_{b,\mathbb{F}}(\sigma(A)), \|\cdot\|_{\infty}, \mathbb{F})$. \end{proposition}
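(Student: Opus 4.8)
The plan is to take the embedding to be the canonical inclusion map $\iota\colon \mathrm{Reg}_{\mathbb{F}}(\sigma(A)) \to \mathrm{Borel}_{b,\mathbb{F}}(\sigma(A))$, $f \mapsto f$, and verify the three things an isometric embedding requires: that it is well-defined (i.e. that its codomain genuinely contains every regulated function), that it is linear, and that it preserves norms. The preliminary containment is the only point that needs any real content, and it has already been secured: the Proposition on Boundedness shows every $f \in \mathrm{Reg}_{\mathbb{F}}(\sigma(A))$ is bounded, and the Proposition on Borel-Measurability shows every such $f$ is $(\mathrm{Bor}(\sigma(A)), \mathrm{Bor}(\mathbb{F}))$-measurable. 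Together these give
$$\mathrm{Reg}_{\mathbb{F}}(\sigma(A)) \subseteq \mathrm{Borel}_{b,\mathbb{F}}(\sigma(A)),$$
so $\iota$ indeed lands in the stated codomain.

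Next I would observe that $\iota$ is linear, which is immediate since the vector-space operations on both function spaces are the pointwise ones, so $\iota(\alpha f + \beta g) = \alpha f + \beta g = \alpha\, \iota(f) + \beta\, \iota(g)$. This is the routine step and needs no elaboration.

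For the isometry, I would emphasise that both spaces are equipped with the \emph{same} supremum norm over the \emph{same} underlying set $\sigma(A)$; the norm on the subspace is, by definition, the restriction of $\|\cdot\|_{\infty}$ on $\mathrm{Borel}_{b,\mathbb{F}}(\sigma(A))$. Hence for every regulated $f$,
$$\|\iota(f)\|_{\infty} = \sup_{\lambda \in \sigma(A)} |f(\lambda)|_{\mathbb{F}} = \|f\|_{\infty},$$
which is the isometry condition and in particular forces injectivity of $\iota$. This establishes the claim.

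The only step carrying genuine weight is the containment $\mathrm{Reg}_{\mathbb{F}}(\sigma(A)) \subseteq \mathrm{Borel}_{b,\mathbb{F}}(\sigma(A))$, and even that is not a new obstacle here since it is precisely what the two earlier propositions deliver; everything else is formal. It is worth noting for the surrounding narrative that, having shown earlier that $(\mathrm{Reg}_{\mathbb{F}}(\sigma(A)), \|\cdot\|_{\infty}, \mathbb{F})$ is a Banach space and that $\iota$ is a linear isometry between Banach spaces, the Theorem on the image of a linear isometry between Banach spaces then guarantees that $\iota\bigl(\mathrm{Reg}_{\mathbb{F}}(\sigma(A))\bigr)$ is a closed (hence Banach) subspace of $\mathrm{Borel}_{b,\mathbb{F}}(\sigma(A))$, which is exactly the $\|\cdot\|_{\infty}$-closed subalgebra realisation the chapter is aiming for.
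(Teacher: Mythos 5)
Your proposal is correct and follows essentially the same route as the paper: the canonical inclusion $\iota$, well-definedness secured by the earlier Boundedness and Borel-Measurability propositions, the one-line supremum-norm computation for isometry, and injectivity as its consequence. The paper's own proof additionally records the multiplicativity and unit-preservation of $\iota$ (so that the image is a subalgebra, not merely a subspace) and exhibits $\mathds{1}_{\mathbb{Q}}$ to show non-surjectivity, but these are extras beyond what the embeddability statement itself requires, and your closing remark via the image-of-a-linear-isometry theorem recovers the closed-subalgebra realisation the chapter is after.
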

\begin{proof}
We consider the inclusion map:
$$\iota \colon\;\mathrm{Reg}_{\mathbb{F}}(\sigma(A))\;\longrightarrow\;\mathrm{Borel}_{b,\mathbb{F}}(\sigma(A)), \quad f\;\longmapsto\; \iota(f) $$
Since $\mathrm{Reg}_{\mathbb{F}}(\sigma(A)) \subsetneq   \mathrm{Borel}_{b,\mathbb{F}}(\sigma(A))$, our map is trivially well-defined. 
\newpage
As $(\mathrm{Reg}_{\mathbb{F}}(\sigma(A)), \cdot_{\mathrm{Reg}_{\mathbb{F}}(\sigma(A))}, \mathbb{F})$ is an algebra with respect to function multiplication, the axioms of a unital algebra homomorphism are satisfied.\\

For all $f_1. f_2 \in \mathrm{Reg}_{\mathbb{F}}(\sigma(A))$ and $\alpha \in \mathbb{F}$, we have:
\begin{enumerate}[label = \arabic*)]
    \item $\iota(f_1 + f_2) = f_1 + f_2  = \iota(f_1) + \iota(f_2)$
    \item $\iota(\alpha f_1) = \alpha f_1 = \alpha \iota(f_1)$ 
    \item $\iota(f_1f_2) = f_1 f_2 = \iota(f_1)\iota(f_2)$
    \item $\iota(1_{\mathrm{Reg}_{\mathbb{F}}(\sigma(A))}) = 1_{\mathrm{Reg}_{\mathbb{F}}(\sigma(A))} = 1_{ \mathrm{Borel}_{b,\mathbb{F}}(\sigma(A))}$
\end{enumerate}

\textbf{Isometry:}

By fixing  $f \in \mathrm{Reg}_{\mathbb{F}}(\sigma(A))$ we get:
$$\|\iota(f)\|_{\infty} = \sup\limits_{x \in \sigma(A)}|f(x)|_{\mathbb{F}} = \|f\|_{\infty}$$

This proves the isometry property.
\medskip

\textbf{Injectiveness:}

It is a general fact that linear isometries of normed vector spaces are injective. 

Explicitly, for all distinct $f_1, f_2 \in  \mathrm{Reg}_{\mathbb{F}}(\sigma(A))$, we have:
$$\|\iota(f_1) - \iota((f_2)\|_{\infty} = \|\iota(f_1 - f_2)\|_{\infty} = \|f_1 - f_2\|_{\infty}   > 0$$
This implies that $\iota(f_1) \neq \iota(f_2)$.
Hence, injectivity is proven.

\medskip

\textbf{Non-Surjectiveness:}

It suffices to find a non-regulated, bounded, measurable function. Since being non-regulated is equivalent to having uncountably many discontinuities, one immediate candidate is Dirichlet's function $\mathds{1}_{\mathbb{Q}}$. Since this indicator function is trivially bounded, we will prove that it is measurable.

Let us fix an open set $U  \subseteq \mathbb{F}$. There will be four mutually exclusive cases:
\begin{enumerate}[label = \arabic*)]
    \item $U \cap \{0,1\} = \emptyset$
    \item $\{0,1\} \subseteq U$
    \item $1 \in U$ but $0 \notin U$
    \item $0 \in U$ but $1 \notin U$
\end{enumerate}

\newpage
In this order, we will get:
\begin{enumerate}[label = \arabic*)]
    \item $\mathds{1}^{-1}_{\mathbb{Q}}(U) = \emptyset \in Bor(\sigma(A))$
    \item $\mathds{1}^{-1}_{\mathbb{Q}}(U) = \sigma(A) \in Bor(\sigma(A))$
    \item $\mathds{1}^{-1}_{\mathbb{Q}}(U) = \sigma(A) \cap \mathbb{Q}\in Bor(\sigma(A))$
    \item $\mathds{1}^{-1}_{\mathbb{Q}}(U) = \sigma(A) \cap \left(\mathbb{R} \setminus \mathbb{Q}\right) \subseteq Bor(\sigma(A))$
\end{enumerate}

This proves that Dirichlet's function is measurable and that our map is non-surjective. 

\medskip

As a consequence, we obtain the following isometric isomorphism, with respect to $\|\cdot\|_{\infty}:$

$$\iota \colon\;\mathrm{Reg}_{\mathbb{F}}(\sigma(A))\;\xrightarrow[]{\,\,\,\cong\,\,\,}\;\iota\left(\mathrm{Reg}_{\mathbb{F}}(\sigma(A))\right), \quad f\;\longmapsto\; \iota(f)$$

This proves our claim. 
\end{proof}
\medskip

In particular,  $(\mathrm{Reg}_{\mathbb{F}}(\sigma(A)), \cdot_{\mathrm{Reg}_{\mathbb{F}}(\sigma(A))}, \mathbb{F})$ can be seen as a closed $\|\cdot\|_{\infty}$ subalgebra of  \\ $(\mathrm{Borel}_{b,\mathbb{F}}(\sigma(A)), \|\cdot\|_{\infty}, \mathbb{F})$. With the help of this identification, we can also associate in a more aesthetic manner the relationship between the $HK$ and the Borel functional calculus:

\[
\begin{array}{c@{\qquad}c}
  \begin{tikzcd}[column sep=huge,row sep=huge]
    \mathrm{Reg}_{\mathbb{F}}(\sigma(A))
      \arrow[r,hook,"\iota"]
      \arrow[dr,"\Phi^{HK}_A"']
    &
    \mathrm{Borel}_{b,\mathbb{F}}(\sigma(A))
      \arrow[d,"\Phi^{B}_A"]
    \\
    &
    \mathrm{B}(H)
  \end{tikzcd}
&
  \hspace{1cm} \displaystyle
  \Phi_A^{HK} \;=\; \Phi_A^B \,\circ\, \iota
\end{array}
\]

The rather interesting case for both $\mathrm{Reg}_{\mathbb{F}}(\sigma(A))$ and $\mathrm{Borel}_{b,\mathbb{F}}(\sigma(A))$ is that they are not always separable.

\begin{theorem}[$(\mathrm{Reg}_{\mathbb{F}}(\sigma(A)), \|\cdot\|_{\infty}, \mathbb{F})$ is not always Separable]
If $\sigma(A)$ contains a nondegenerate interval, $(\mathrm{Reg}_{\mathbb{F}}(\sigma(A)), \|\cdot\|_{\infty}, \mathbb{F})$ is not separable.
\end{theorem}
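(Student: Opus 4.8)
The plan is to exhibit an uncountable subset of $\mathrm{Reg}_{\mathbb{F}}(\sigma(A))$ that is $1$-separated in the $\|\cdot\|_{\infty}$ metric, and then invoke the contrapositive of the earlier theorem asserting that in a separable metric space every $\epsilon$-separated subset is at most countable. By hypothesis a nondegenerate interval sits inside $\sigma(A)$, so write $[a,b]\subseteq\sigma(A)$ with $a<b$. The family I would use is the collection of Heaviside functions $\{H_c\}_{c\in(a,b)}$, each restricted to $\sigma(A)$.

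First I would check that each $H_c$ lies in $\mathrm{Reg}_{\mathbb{F}}(\sigma(A))$. The function $H_c$ is constant on either side of $c$ and has a single jump discontinuity there, so its set of discontinuities is finite and consists only of a jump; by the equivalent characterisation of regulated functions (discontinuities at most countable, only jump or removable), this places $H_c$ in $\mathrm{Reg}_{\mathbb{F}}(\sigma(A))$.

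Next I would compute the mutual distances. For distinct $c_1,c_2\in(a,b)$, say $c_1<c_2$, the difference $H_{c_1}-H_{c_2}$ equals $\mathds{1}_{[c_1,c_2)}$, which takes the value $1$ precisely on $[c_1,c_2)$ and $0$ elsewhere. The crucial use of the interval hypothesis is that $c_1\in[a,b]\subseteq\sigma(A)$, so $c_1$ is an actual point of $\sigma(A)$ lying in $[c_1,c_2)$; hence $\|H_{c_1}-H_{c_2}\|_{\infty}=1$. This shows $\{H_c\}_{c\in(a,b)}$ is $1$-separated, and since $(a,b)$ is uncountable, it is an uncountable $1$-separated subset of $\mathrm{Reg}_{\mathbb{F}}(\sigma(A))$.

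Finally, applying the earlier theorem with $\epsilon=1$ gives the contradiction: were $(\mathrm{Reg}_{\mathbb{F}}(\sigma(A)),\|\cdot\|_{\infty},\mathbb{F})$ separable, every $1$-separated subset would be at most countable, contradicting the uncountability of $\{H_c\}_{c\in(a,b)}$. The only genuine subtlety, and the step I expect to need the most care, is ensuring the sup-norm gap of $1$ is actually attained on $\sigma(A)$ rather than merely on $\mathbb{R}$; this is exactly where the nondegenerate interval is indispensable, since without a continuum of points of $\sigma(A)$ between the jumps the difference could in principle have arbitrarily small supremum over $\sigma(A)$.
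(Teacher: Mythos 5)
Your proposal is correct and follows essentially the same route as the paper: the paper likewise exhibits the family of Heaviside functions $\{H_c\}$ (indexed over all of $\sigma(A)$ rather than just over the interval), notes that distinct members are at $\|\cdot\|_{\infty}$-distance $1$, and concludes non-separability from the earlier theorem that in a separable metric space every $\epsilon$-separated subset is at most countable. Your extra checks --- that each $H_c$ is regulated and that the distance $1$ is genuinely attained at a point of $\sigma(A)$, which is where the nondegenerate-interval hypothesis enters --- are details the paper leaves implicit, and you handle them correctly.
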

\begin{proof}
Consider $\epsilon =1$. It is sufficient to show that our function space does have an uncountable $1$-separated subset. Let $\{H_c\}_{c \in \sigma(A)}$ be the collection of Heaviside functions.

As for all distinct $y_1,y_2 \in \sigma(A)$, we have:
$$ d(H_{y_1}, H_{y_2}) : = \|H_{y_1} - H_{y_2}\|_{\infty} = 1$$
we conclude that $\{H_c\}_{c \in \sigma(A)}$  is an uncountable $1$-separated subset of $\mathrm{Reg}_{\mathbb{F}}(\sigma(A))$.

This proves our claim.
\end{proof}
\newpage
The same idea can be applied to $(\mathrm{Borel}_{b,\mathbb{F}}(\sigma(A)), \|\cdot\|_{\infty}, \mathbb{F})$.
\begin{theorem}[$(\mathrm{Borel}_{b,\mathbb{F}}(\sigma(A)), \|\cdot\|_{\infty}, \mathbb{F})$ is not always Separable]
If $\sigma(A)$ contains a nondegenerate interval, $(\mathrm{Borel}_{b,\mathbb{F}}(\sigma(A)), \|\cdot\|_{\infty}, \mathbb{F})$ is not separable.
\end{theorem}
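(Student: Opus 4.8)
The plan is to mirror the argument just given for $(\mathrm{Reg}_{\mathbb{F}}(\sigma(A)), \|\cdot\|_{\infty}, \mathbb{F})$: produce an uncountable $1$-separated subset and then invoke the contrapositive of the theorem that a separable metric space admits only at-most-countable $\epsilon$-separated subsets. Since $\sigma(A)$ contains a nondegenerate interval $I$, the index set is automatically uncountable, so the whole task reduces to exhibiting an uncountable family of bounded Borel functions that is pairwise $\|\cdot\|_{\infty}$-distant by at least $1$.

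First I would note that the Heaviside family $\{H_c\}_{c \in I}$ from the regulated case is already available here: every regulated function is bounded and $(\mathrm{Bor}(\sigma(A)),\mathrm{Bor}(\mathbb{F}))$-measurable, so $\{H_c\}_{c \in I} \subseteq \mathrm{Borel}_{b,\mathbb{F}}(\sigma(A))$. Because the inclusion $\iota$ is an isometry with respect to $\|\cdot\|_{\infty}$, the equality $\|H_{y_1} - H_{y_2}\|_{\infty} = 1$ for distinct $y_1, y_2 \in I$ transfers verbatim, so $\{H_c\}_{c \in I}$ is an uncountable $1$-separated subset of $\mathrm{Borel}_{b,\mathbb{F}}(\sigma(A))$.

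Alternatively --- and this is the genuinely Borel-flavoured version of \enquote{the same idea} --- I would use the singleton indicators $\{\mathds{1}_{\{c\}}\}_{c \in I}$. Each is bounded with range in $\{0,1\}$, and it is measurable because for any open $U \subseteq \mathbb{F}$ the preimage $\mathds{1}_{\{c\}}^{-1}(U)$ is one of $\emptyset$, $\{c\}$, $\sigma(A) \setminus \{c\}$, or $\sigma(A)$, each lying in $\mathrm{Bor}(\sigma(A))$ since singletons are Borel. For distinct $c_1, c_2 \in I$ the difference attains modulus $1$ at $c_1$, so $\|\mathds{1}_{\{c_1\}} - \mathds{1}_{\{c_2\}}\|_{\infty} = 1$, and the family is again uncountable and $1$-separated.

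With either family in hand the conclusion is immediate: were $(\mathrm{Borel}_{b,\mathbb{F}}(\sigma(A)), \|\cdot\|_{\infty}, \mathbb{F})$ separable, the cited theorem would force every $1$-separated subset to be at most countable, contradicting the uncountability of the exhibited family. The only point requiring any care --- and hence the closest thing to an obstacle --- is the measurability bookkeeping for the singleton indicators (or, in the first route, the explicit appeal to the isometric embedding to carry the distance computation across); both are routine and already licensed by results established earlier in the excerpt.
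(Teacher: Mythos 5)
Your proposal is correct and matches the paper's own argument: the paper takes exactly your second route, exhibiting the uncountable family of singleton indicators $\{\mathds{1}_{\{c\}}\}_{c}$ as a $1$-separated subset and invoking the earlier theorem that separable metric spaces admit only at-most-countable $\epsilon$-separated subsets. Your first (Heaviside) route is a harmless variant of the same idea, and your extra measurability bookkeeping, while not spelled out in the paper, is a welcome addition.
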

\begin{proof}
Consider $\epsilon = 1$. It is sufficient to show that our function space does have an uncountable $1$-separated subset. Let $\{\mathds{1}_x\}_{x \in \sigma(A)}$ be our candidate.

As for all distinct $y_1, y_2 \in \sigma(A)$, we have:
$$d(\mathds{1}_{y_1}, \mathds{1}_{y_2}):= \|\mathds{1}_{y_1} - \mathds{1}_{y_2}\|_{\infty} = 1 $$
 we conclude that $\{\mathds{1}_x\}_{x \in \sigma(A)}$ is an uncountable $1$-separated subset of $\mathrm{Borel}_{b,\mathbb{F}}(\sigma(A))$.

This proves our claim.
\end{proof}
\bigskip 

As a consequence, we do not have a countable set that can approximate each element of our function spaces arbitrarily well. This apparent obstruction does not stop us from using the density of step functions in the space of regulated functions. For the space of bounded, Borel-measurable functions, nice approximation properties often lack. 

This will become apparent once we consider the following two examples.

\bigskip 

\begin{example}[The multiplication operator for bounded, Borel-measurable functions]  
\end{example}
Let $\sigma(A) = [0,1]$ and  $\mathrm{L^2}([0,1]) : =\mathrm{L}^2([0,1], Bor([0,1]),\lambda)$ be the Lebesgue space under the Lebesgue measure.
For both spaces, we will consider the multiplication operator on $\mathrm{L}^2([0,1])$ with respect to some $f \in \mathrm{Borel}_{b,\mathbb{F}}([0,1])$:
$$M_f:  \mathrm{L}^2([0,1])\longrightarrow \mathrm{L}^2([0,1]), \quad g \longmapsto fg$$
A useful property of the multiplication operator is that its operator norm coincides with the supremum norm of the underlying function.
\begin{proposition}For all $f\in \mathrm{Borel}_{b,\mathbb{F}}(\,[0,1]\,):\|M_f\|_{op} = \|f\|_{\infty}$. \end{proposition}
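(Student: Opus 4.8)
The plan is to establish the two inequalities $\|M_f\|_{op}\le\|f\|_\infty$ and $\|M_f\|_{op}\ge\|f\|_\infty$ separately. The first is a routine integral estimate, while the second needs a well-chosen family of unit test vectors in $\mathrm{L}^2([0,1])$. Throughout I would read $\|f\|_\infty$ as the $\lambda$-essential supremum of $|f|_{\mathbb{F}}$, which I say more about at the end, since that reading is precisely what makes the statement correct.

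For the upper bound I would fix $g\in\mathrm{L}^2([0,1])$ and use the almost-everywhere pointwise domination $|f(x)g(x)|_{\mathbb{F}}\le\|f\|_\infty\,|g(x)|_{\mathbb{F}}$. Squaring and integrating against $\lambda$ gives
$$\|M_f g\|_{\mathrm{L}^2}^2=\int_{[0,1]}|f|_{\mathbb{F}}^2|g|_{\mathbb{F}}^2\,d\lambda\le\|f\|_\infty^2\,\|g\|_{\mathrm{L}^2}^2,$$
so $\|M_f g\|_{\mathrm{L}^2}\le\|f\|_\infty\|g\|_{\mathrm{L}^2}$ for every $g$, and taking the supremum over unit vectors yields $\|M_f\|_{op}\le\|f\|_\infty$. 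This direction uses nothing beyond monotonicity of the integral and the definition of the operator norm, so I expect it to be entirely mechanical.

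For the lower bound I would fix $\epsilon>0$ and pass to the super-level set $S_\epsilon:=\{x\in[0,1]\mid |f(x)|_{\mathbb{F}}>\|f\|_\infty-\epsilon\}$, which is Borel because $f$ is measurable. Provided $\lambda(S_\epsilon)>0$, the normalized indicator $g_\epsilon:=\lambda(S_\epsilon)^{-1/2}\,\mathds{1}_{S_\epsilon}$ is a unit vector in $\mathrm{L}^2([0,1])$, and a direct computation gives
$$\|M_f g_\epsilon\|_{\mathrm{L}^2}^2=\frac{1}{\lambda(S_\epsilon)}\int_{S_\epsilon}|f|_{\mathbb{F}}^2\,d\lambda\ge(\|f\|_\infty-\epsilon)^2,$$
whence $\|M_f\|_{op}\ge\|f\|_\infty-\epsilon$; letting $\epsilon\to0$ then closes the argument.

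The main obstacle is exactly the positivity $\lambda(S_\epsilon)>0$, and it is genuinely the crux rather than a formality. If $\|f\|_\infty$ were the pointwise supremum, $S_\epsilon$ could be $\lambda$-null — for instance $f=\mathds{1}_{\{1/2\}}$ has pointwise supremum $1$ yet induces the zero operator on $\mathrm{L}^2([0,1])$ — and the equality would fail. Reading $\|f\|_\infty$ as the essential supremum repairs this: by the very definition of the essential supremum, $\lambda(S_\epsilon)>0$ for every $\epsilon>0$. I would therefore record this interpretation at the outset, after which the lower-bound computation above is automatic and the two inequalities combine to give $\|M_f\|_{op}=\|f\|_\infty$.
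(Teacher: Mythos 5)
Your proof is correct and takes essentially the same route as the paper's: the same pointwise-domination estimate for $\|M_f\|_{op}\le\|f\|_\infty$, and the same lower bound via the super-level set $S_\epsilon$ (the paper's $E_\epsilon$) with its normalized indicator as test vector. If anything, your write-up is more careful on the two points where the paper's proof is shaky: you normalize by $\lambda(S_\epsilon)^{-1/2}$ (the paper divides $\mathds{1}_{E_\epsilon}$ by $\lambda(E_\epsilon)$ rather than by $\sqrt{\lambda(E_\epsilon)}$), and you make explicit that $\|f\|_\infty$ must be read as the $\lambda$-essential supremum for $\lambda(S_\epsilon)>0$ to be guaranteed --- the paper's appeal to the \enquote{minimality of $\|f\|_\infty$} tacitly assumes this reading, and under the pointwise supremum convention used elsewhere in the paper the stated equality fails for precisely your counterexample $f=\mathds{1}_{\{1/2\}}$.
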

\begin{proof}
We suppose $f \in \mathrm{Borel}_{b,\mathbb{F}}([0,1])$ and $h \in \mathrm{L^2}([0,1])$. 

Then:
$$(\|M_f (h)\|_{\mathrm{L^2}})^2 = (\|fh\|_{\mathrm{L^2}})^2= \int\limits_{[0,1]}(|f(x)h(x)|_{\mathbb{F}})^2 \, d\lambda(x) \leq (\|f\|_{\infty})^2(\|h\|_{\mathrm{L^2}})^2$$

By taking the supremum norm of all unit-normed square-integrable functions, we get:
$$\|M_f\|_{op} \leq \|f\|_{\infty}$$
For the reverse inclusion, we fix some $\epsilon>0$, and consider the following set:
$$E_{\epsilon}:= \left\{x \in [0,1] \mid |f(x)|_{\mathbb{F}} > \|f\|_{\infty} - \epsilon \right\}$$
We claim that this set has positive Lebesgue measure.

Indeed, if $\lambda(E_{\epsilon}) =  0 $, we will have:
$$ |f(x)|_{\mathbb{F}} \leq \|f\|_{\infty} - \epsilon \quad \text{ for almost all } x\in[0,1]$$
This contradicts he minimality of $\|f\|_{\infty}$. 

Hence, $\lambda(E_{\epsilon}) >  0$.

Let:
$$h = \frac{\mathds{1}_{E_{\epsilon}}}{\|\mathds{1}_{E_{\epsilon}}\|_{\mathrm{L^2}}} = \frac{\mathds{1}_{E_{\epsilon}}}{\lambda(E_{\epsilon})}$$
Then, by acting on $h$ with the multiplication operator, we get:
$$(\|M_f(h)\|_{\mathrm{L^2}})^2 = (\|f h\|_{\mathrm{L^2}})^2 = \int\limits_{[0,1]}(|f(x)h(x)|_{\mathbb{F}})^2 \,d\lambda(x) \geq \frac{1}{\lambda(E_{\epsilon})}\int\limits_{E_{\epsilon}} (\|f\|_{\infty} - \epsilon)^2 \, d\lambda(x) =  (\|f\|_{\infty} - \epsilon)^2$$
Consequently:
$$\|M_f\|_{op} \geq \|M_f(h)\|_{\mathrm{L^2}} \geq \|f\|_{\infty} - \epsilon$$
As $\epsilon > 0$ was arbitrarily chosen, we get the reverse inequality:
$$\|M_f\|_{op} \geq \|f\|_{\infty}$$
This proves our claim.
\end{proof}
\bigskip

Another property we will use in our analysis is the linearity of the multiplication function in its subscript.
\begin{proposition}[Subscript-Linearity of the Multiplication Operator]

\mbox{}\\
For all $f_1,f_2 \in\mathrm{Borel}_{b,\mathbb{F}}([0,1])$ and $\alpha_1, \alpha_2 \in \mathbb{F}$,  we have:
$$M_{\alpha_1f_1 + \alpha_2 f_2} = \alpha_1M_{f_1} + \alpha_2 M_{f_2}$$  
\end{proposition}

\begin{proof} By considering $f_1,f_2 \in\mathrm{Borel}_{b,\mathbb{F}}([0,1]), \, \alpha_1, \alpha_2 \in \mathbb{F}$ and $h \in \mathrm{L^2([0,1])}$, we get:
\begin{align*}
    \left(M_{\alpha_1f_1} + M_{\alpha_2f_2}\right)(h) &:= M_{\alpha_1f_1}(h) + M_{\alpha_2 f_2}(h) \\
    &= \left(\alpha_1 f_1 + \alpha_2 f_2\right)(h) \\
    &= \left(M_{\alpha_1 f_1 + \alpha_2 f_2}\right)(h)
\end{align*}

This proves subscript-linearity.
\end{proof}
\bigskip

For the Borel functional calculus, we will choose the subscript of our multiplication operator to be the Dirichlet function $\mathds{1}_{\mathbb{Q}}$ on $[0,1]$. As this function is not regulated, it fails to be uniformly approximable by step functions. Indeed, by the density of the rationals and irrationals in the reals, we have that for each partition of mutually disjoint subsets $\{I_i\}_{i = \overline{1,n}}$ of $[0,1]$ and any step function:
$$s = \sum\limits_{i=1}^nc_i \mathds{1}_{I_i}$$ 
we get: $$\sup\limits_{\lambda \in I_i}|s(\lambda) - \mathds{1}_{\mathbb{Q}}(\lambda)|_{\mathbb{F}} = \max\{|c_i -1|_{\mathbb{F}},|c_i|_{\mathbb{F}}\} > 0 \quad \text{ for } i=\overline{1,n}$$
Equivalently:
$$\|s - \mathds{1}_{\mathbb{Q}}\|_{\infty} > 0$$
In particular, even if we may have a sequence of step functions $\{s_n\}_{n \in \mathbb{N}}$ converging pointwise to $\mathds{1}_{\mathbb{Q}}$, the sequence will fail to approximate Dirichlet's function uniformly.

By the established properties of the multiplication operator:
$$\|M_{s_n} - M_{\mathds{1}_{\mathbb{Q}}}\|_{op} = \|M_{s_n - \mathds{1}_{\mathbb{Q}}}\|_{op} = \|s_n - \mathds{1}_{\mathbb{Q}}\|_{\infty} > 0$$

Hence, not every operator can be described as the operator limit of nicer, simpler operators.

\medskip

\begin{example}[The multiplication operator for regulated functions]  
\end{example}
For the $HK$ functional calculus, we will take a function which mimics the erratic behaviour of Dirichlet's function at rational points: Thomae's function. 

On $[0,1]$, we define Thomae's function as follows:
$$
t(\lambda) : =
\begin{cases}
    \frac{1}{q},  & \text{for }\,\, \lambda = \frac{p}{q}\in \mathbb{Q} \cap [0,1] \text{ and }  (p,q) = 1\\
    0, & \text{else }
\end{cases}
$$
By constructing the following pointwise convergent sequence of $t$:
$$
\widehat{t}_n(\lambda) : =
\begin{cases}
    0,  & \text{for }\,\, x = 0\\
    1, & \text{for }\,\,x \in (0,\frac{1}{n}]\\
    f(x), &\text{for }\,\, x \in (\frac{1}{n},1]
\end{cases}
\qquad \text{for all } n \in \mathbb{N}
$$
we note that for every $n \in \mathbb{N}$ and $x \in (0,\frac{1}{n}] \cap [0,1]\setminus\mathbb{Q}$, we have:
$$|\widehat{t}_n(x) - t_n(x)|_{\mathbb{R}} = 1$$
Hence:
$$\|t_n - t||_{\infty} \geq 1$$
This leads to the same problem as in the Borel functional calculus. However, by the density of the step functions in the space of regulated functions, we have some sequence of step functions $\{t_n\}_{n \in \mathbb{N}}$ converging uniformly to Thomae's function.

Consequently:
$$\|M_{t_n} - M_t\|_{op} = \|M_{t_n - t}\|_{op} = \|t_n - t\|_{\infty} \xrightarrow[\,n \,\to\, \infty\,]{\,\|\cdot\|_{\mathbb{R}}\,} 0$$

These two examples show us how disregarding the defining property of regulated functions leads to pathologies when wanting to talk about operators as an operator norm limit of nicer operators.
\chapter{The Unbounded \texorpdfstring{$HK$}{TEXT} Functional Calculus} \label{ch: unboundedhk}
In this chapter, we will extend the $HK$ functional calculus to unbounded self-adjoint operators at the cost of losing the uniform approximability of regulated functions by step functions and altering the spectral mapping theorem. 

\bigskip

Given a Hilbert space $(H,\mathbb{F})$, we start off by considering an unbounded self-adjoint operator $A :D(A) \subseteq H \longrightarrow H$, where $\mathrm{D}(A)\subseteq H$ is a vector subspace. In the unbounded case, the spectrum only preserves closedness with respect to the Euclidean subspace topology of $\mathbb{R}$. This creates problems when wanting to apply gauge-fine tagged partitions to unbounded intervals.
One natural way of tackling this problem is to break down the spectrum of $A$ into compact pieces, realise the local $HK$ functional calculus on these parts, and, finally, define the unbounded functional calculus as a limit of the bounded one.

\bigskip

Since in the unbounded case the notion of an adjoint makes sense only for densely defined operators, we will need to make a suitable choice for the domain of our functional calculus.

For  $f \in \mathrm{Reg_{\mathbb{F}}}(\sigma(A))$, let:
$$\mathrm{D}(f(A)): = \left\{ x \in H \,\middle|\, \,\,\int\limits_{\sigma(A)}(|f(\lambda)|)^2 \, d\mu_{x,x}(\lambda) < \infty \right\}$$

\begin{proposition} $D(f(A))$ is dense in $H$. \end{proposition}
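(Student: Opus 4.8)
The plan is to exhaust $\sigma(A)$ by the sublevel sets of $|f|_{\mathbb{F}}$ and to approximate an arbitrary vector by its spectral truncations, each of which lands inside $\mathrm{D}(f(A))$. Since regulated functions are Borel measurable, the sets
$$B_n := \left\{\lambda \in \sigma(A) \,\middle|\, |f(\lambda)|_{\mathbb{F}} \le n\right\}, \qquad n \in \mathbb{N},$$
belong to $Bor(\sigma(A))$, increase with $n$, and satisfy $\bigcup_{n \in \mathbb{N}} B_n = \sigma(A)$ because $|f(\lambda)|_{\mathbb{F}} < \infty$ for every $\lambda$. Setting $P_n := E(B_n)$ yields a sequence of orthogonal projections, and these are the approximants I would use.

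First I would verify that $P_n x \in \mathrm{D}(f(A))$ for every $x \in H$. Using the self-adjointness of $E(B_n)$ together with the multiplicativity $E(\Omega) E(B_n) = E(\Omega \cap B_n)$, one obtains for every $\Omega \in Bor(\sigma(A))$
$$\mu_{P_n x, P_n x}(\Omega) = \inner{E(\Omega) E(B_n) x}{E(B_n) x}_H = \inner{E(\Omega \cap B_n) x}{x}_H = \mu_{x,x}(\Omega \cap B_n).$$
Hence the defining integral collapses to an integral over $B_n$, where $|f|_{\mathbb{F}} \le n$, giving
$$\int_{\sigma(A)} \left(|f(\lambda)|_{\mathbb{F}}\right)^2 d\mu_{P_n x, P_n x}(\lambda) = \int_{B_n} \left(|f(\lambda)|_{\mathbb{F}}\right)^2 d\mu_{x,x}(\lambda) \le n^2 \, \mu_{x,x}(\sigma(A)) = n^2 \|x\|_H^2 < \infty,$$
so indeed $P_n x \in \mathrm{D}(f(A))$.

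Next I would establish that $P_n x \to x$ for each fixed $x$. Since $I - P_n = E(\sigma(A)\setminus B_n)$ is itself an orthogonal projection,
$$\|(I - P_n) x\|_H^2 = \inner{E(\sigma(A)\setminus B_n) x}{x}_H = \mu_{x,x}(\sigma(A)\setminus B_n).$$
The scalar measure $\mu_{x,x}$ is finite and positive of total mass $\mu_{x,x}(\sigma(A)) = \|x\|_H^2$, and $\sigma(A)\setminus B_n \downarrow \emptyset$; continuity from above then forces $\mu_{x,x}(\sigma(A)\setminus B_n) \to 0$, whence $P_n x \to x$ in $H$. Because $x$ was arbitrary and every $P_n x$ lies in $\mathrm{D}(f(A))$, the set $\mathrm{D}(f(A))$ is dense.

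The hard part is not any single estimate but the careful handling of the projection-valued measure: the identity $\mu_{P_n x, P_n x}(\Omega) = \mu_{x,x}(\Omega \cap B_n)$ rests on the multiplicativity and self-adjointness of $E$, while the strong convergence $P_n \to I$ rests on the $\sigma$-additivity of $E$ expressed as continuity from above of the finite measure $\mu_{x,x}$. Once these two spectral facts are in place, the bound $|f|_{\mathbb{F}} \le n$ on $B_n$ renders the finiteness of the integral immediate.
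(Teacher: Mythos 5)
Your proof is correct, but it takes a genuinely different route from the paper. The paper truncates in the \emph{spectral variable}: it sets $x_n := E([-n,n])x$, notes $x_n \to x$, and uses the fact that a regulated function is bounded on the compact set $\sigma(A)\cap[-n,n]$ to conclude $E([-n,n])H \subseteq \mathrm{D}(f(A))$ for every $n$. You instead truncate in the \emph{function values}, using the sublevel sets $B_n = \{|f|_{\mathbb{F}} \le n\}$ and the projections $E(B_n)$, with the bound $|f|_{\mathbb{F}}\le n$ on $B_n$ doing the work; your identity $\mu_{P_n x, P_n x}(\Omega) = \mu_{x,x}(\Omega\cap B_n)$ and the continuity-from-above argument for $P_n x \to x$ are both sound, and in fact more explicit than the paper's appeal to "properties of the projection-valued measure." Two trade-offs are worth noting. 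First, your argument needs $f$ to be Borel measurable on all of $\sigma(A)$ (so that $B_n \in Bor(\sigma(A))$); the paper proves measurability of regulated functions only on compact domains, though the extension to a closed unbounded $\sigma(A)$ is immediate since the discontinuity set remains countable. In exchange, your proof never uses regularity beyond measurability, so it applies verbatim to any everywhere-finite Borel function. Second, the paper's approach produces a dense subspace $\bigcup_{n}E([-n,n])H$ that is \emph{independent of} $f$, and the paper exploits exactly this later (when verifying the algebra axioms of the unbounded calculus, it needs a single dense set contained in $\mathrm{D}(f_1(A)) \cap \mathrm{D}(f_2(A))$ and in $\mathrm{D}(f_1(A)f_2(A))$); your approximants $P_n x$ depend on $f$, so your argument proves the stated proposition but would not hand over that reusable common core without modification.
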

\begin{proof}
We fix some $x \in H$ and consider the following sequence:
$$\text{For all } n \in \mathbb{N}: x_n: = E([-n,n])x$$
By the properties of the projection valued measure, we have:
$$\|x_n - x\|_{H} = \|E([-n,n])x - x\|_H \xrightarrow[\,n \,\to\, \infty\,]{\,|\cdot|_{\mathbb{R}}\,} \|x - x\|_H = 0$$
Hence:
$$x_n \xrightarrow[\,n \,\to\, \infty\,]{\,\|\cdot\|_{\mathbb{H}}\,}x$$
Moreover, since regulated functions are bounded on compact intervals, we have:
\begin{align*}
I_{x_n,x_n}((|f|)^2) &= \int\limits_{\sigma(A)}(|f(\lambda)|_{\mathbb{F}})^2 \, d\mu_{x_n,x_n}(\lambda) \\
&= \int\limits_{[-n,n]}(|f(\lambda)|_{\mathbb{F}})^2 \, d\mu_{x_n,x_n}(\lambda)\\ 
&\leq \|\restr{f}{[-n,n]}\|_{\infty}\,\mu_{x_n,x_n}([-n,n])\\
&\leq \|\restr{f}{[-n,n]}\|_{\infty} (\|x_n\|_H)^2
\end{align*}
But this shows that :
$$E([-n,n])H \subseteq \mathrm{D}(f(A)) \,\,\text{ for each } n \in \mathbb{N}$$
Thus:
$$\bigcup\limits_{n=1}^\infty E([-n,n])H \subseteq \mathrm{D}(f(A))$$
But since this subset is dense in $H$, we get by the monotonicity of the topological closure that $\mathrm{D}(f(A))$ is also dense in $H$.

This proves our claim. 
\end{proof}
\medskip 
It is precisely because of this step that we can talk about self-adjoint unbounded operators. We note that by considering the identity function on $\mathrm{Reg}_{\mathbb{F}}(\sigma(A))$, we get the domain of $A$:
$$\mathrm{D}(A): = \left\{ x \in H \,\middle|\, \int_{\sigma(A)}\lambda^2 \, d\mu_{x,x}(\lambda) < \infty \right\}$$
\bigskip

We can now start with our construction.

Let $\sigma(A)$ be the spectrum of $\left(A, \mathrm{D}(A)\right)$ on some Hilbert space $(H, \mathbb{F})$.

Consider its truncation $\sigma_{[-n,n]}(A) := \sigma(A) \cap [-n,n]$. 

Then, by the bounded $HK$ functional calculus, we get:
$$\text{For all } f \in \mathrm{Reg_{\mathbb{F}}}(\sigma(A))  \text{ and } x,y \in H:  I^n_{x,y}(f) : = \int\limits_{\sigma_n(A)}f(\lambda) \, d\mu_{x,y}(\lambda)$$

\begin{proposition}$\{I^n_{x,y}(f)\}_{n \in \mathbb{N}}$ is Cauchy. \end{proposition}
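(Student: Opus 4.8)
The plan is to reduce the Cauchy condition to a tail estimate and then control that tail by a Cauchy--Schwarz inequality for the spectral integral. Throughout I take $x \in \mathrm{D}(f(A))$ and $y \in H$, since this is the regime in which the limit is meant to define the sesquilinear form $(x,y)\mapsto \inner{f(A)x}{y}_H$. Because the truncations are nested, for $m > n$ the measure $\mu_{x,y}$ is additive over $\sigma_n(A)\subseteq\sigma_m(A)$, so that
$$I^m_{x,y}(f) - I^n_{x,y}(f) = \int_{B_{n,m}} f(\lambda)\, d\mu_{x,y}(\lambda), \qquad B_{n,m} := \sigma(A)\cap\big([-m,m]\setminus[-n,n]\big).$$
It therefore suffices to show that the right-hand side tends to $0$ as $n,m\to\infty$.

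The key step is the inequality
$$\left|\int_{B} f\, d\mu_{x,y}\right| \leq \left(\int_{B} |f(\lambda)|^2_{\mathbb{F}}\, d\mu_{x,x}(\lambda)\right)^{1/2}\big(\mu_{y,y}(B)\big)^{1/2}$$
valid for any Borel set $B\subseteq\sigma_m(A)$. I would prove this first for step functions $s=\sum_i c_i\mathds{1}_{B_i}$ with the $B_i$ a disjoint partition of $B$: using that each $E(B_i)$ is a self-adjoint idempotent with $E(B_i)E(B_j)=0$ for $i\neq j$, one has $\mu_{x,y}(B_i)=\inner{E(B_i)x}{E(B_i)y}_H$ and $\mu_{x,x}(B_i)=\|E(B_i)x\|_H^2$, so the discrete Cauchy--Schwarz inequality applied to $\big(|c_i|\,\|E(B_i)x\|_H\big)_i$ and $\big(\|E(B_i)y\|_H\big)_i$ gives exactly the bound, with $\sum_i\|E(B_i)y\|_H^2=\mu_{y,y}(B)$. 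Since $f$ is regulated on the compact set $\sigma_m(A)$, it is bounded and uniformly approximable by step functions; uniform convergence together with $|\mu_{x,y}|(B)<\infty$ and the finiteness of $\mu_{x,x}$ and $\mu_{y,y}$ then lets me pass both integrals to the limit and conclude the inequality for $f$ itself.

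Finally I apply this with $B=B_{n,m}$. The second factor is harmless, since $\mu_{y,y}(B_{n,m})\leq\mu_{y,y}(\sigma(A))=\|y\|_H^2$. The first factor is the tail of the integral $\int_{\sigma(A)}|f|^2\, d\mu_{x,x}$, which is finite precisely because $x\in\mathrm{D}(f(A))$; as $\mu_{x,x}$ is a finite positive measure, the tails of a convergent integral vanish, so $\int_{B_{n,m}}|f|^2\, d\mu_{x,x}\to 0$ as $n,m\to\infty$. Hence $|I^m_{x,y}(f)-I^n_{x,y}(f)|_{\mathbb{F}}\to 0$, which is the Cauchy condition in $(\mathbb{F},|\cdot|_{\mathbb{F}})$. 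The main obstacle is the spectral Cauchy--Schwarz inequality itself: getting the idempotency bookkeeping right for step functions and justifying the limit passage to regulated $f$; once that is in place the tail argument is routine.
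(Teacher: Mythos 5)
Your proposal is correct and follows essentially the same route as the paper: both reduce the difference $I^{n_1}_{x,y}(f)-I^{n_2}_{x,y}(f)$ to the integral over the annulus $\sigma(A)\cap\bigl([-n_1,n_1]\setminus[-n_2,n_2]\bigr)$, establish the spectral Cauchy--Schwarz bound $\bigl|\int_B f\,d\mu_{x,y}\bigr|\le\bigl(\int_B|f|^2\,d\mu_{x,x}\bigr)^{1/2}\mu_{y,y}(B)^{1/2}$ via the cell-wise estimate $|\mu_{x,y}(\text{cell})|\le\mu_{x,x}(\text{cell})^{1/2}\mu_{y,y}(\text{cell})^{1/2}$ plus discrete Cauchy--Schwarz, bound $\mu_{y,y}(B)\le\|y\|_H^2$, and conclude from the vanishing tail of $\int_{\sigma(A)}|f|^2\,d\mu_{x,x}<\infty$, which is exactly the hypothesis $x\in\mathrm{D}(f(A))$. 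The only cosmetic difference is that you prove the Cauchy--Schwarz inequality by exact evaluation on step functions and a uniform-approximation limit, whereas the paper applies the same estimates directly to the partition sums defining the integral.
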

\begin{proof}
Let $x \in \mathrm{D}(f(A)), \, y \in H$ and  $f \in \mathrm{Reg_{\mathbb{F}}}(\sigma(A))$.

For brevity, we will consider the following notations: 
\begin{align*}
    \widehat\sigma_{n_1,n_2}(A)&:= \sigma(A) \cap ([-n_1,n_1]\setminus[-n_2,n_2])\\
    \widetilde{\sigma}_n(A) &:= \sigma(A) \setminus[-n_1,n_1]\\
    \widehat{I}^{n_1,n_2}_{x,x}(f)&:= \int\limits_{\widehat\sigma_{n_1,n_2}(A)}f(\lambda)\,d\mu_{x,x}(\lambda)\\
    \widetilde{I}^n_{x,x}(f)&:=\int_{\widetilde{\sigma}_n(A)}f(\lambda)\,d\mu_{x,x}(\lambda)
\end{align*}
By considering $\epsilon > 0$ and $n_1> n_2  >  N$ for some $N \in \mathbb{N}$, we have:
\begin{align*}
    \left(\left|I^{n_1}_{x,y}(f) - I^{n_2}_{x,y}(f)\right|_{\mathbb{F}}\right)^2
    &\leq \left(\left|\int\limits_{\,\,\,\widehat\sigma_{n_1,n_2}(A)}f(\lambda) \,d\mu_{x,y}(\lambda)\right|_{\mathbb{F}}\right)^2\\
    &\leq \lim_{m \to \infty}\left(\sum\limits_{j=1}^m|f(t_j)|_{\mathbb{F}} \, \left|\mu_{x,y}(I_j)\right|_{\mathbb{F}}\right)^2\\
    &\leq \lim_{m \to \infty}\left(\sum\limits_{j=1}^m|f(t_j)|_{\mathbb{F}} \, \mu_{x,x}(I_j)^{\frac{1}{2}}\, \mu_{y,y}(I_j)^{\frac{1}{2}} \right)^2\\
    &\leq \lim_{m \to \infty}\left(\sum\limits_{j=1}^m\left(|f(t_j)|_{\mathbb{F}}\right)^2 \, \mu_{x,x}(I_j) \right)\left(\sum\limits_{k=1}^m \mu_{y,y}(I_k)\right)\\
    &\leq \lim_{m \to \infty}\left(\sum\limits_{j=1}^m\left(|f(t_j)|_{\mathbb{F}}\right)^2 \, \mu_{x,x}(I_j) \right) \left(\|y\|_H\right)^2\\
    &= \widehat{I}^{n_1,n_2}_{x,x}((|f|)^2) \left(\|y\|_H\right)^2\\
    &\leq \widetilde{I}^N_{x,x}((|f|)^2) \left(\|y\|_H\right)^2
\end{align*}

Since $I_{x,x}((|f|)^2): = \lim \limits_{n \to \infty} I^n_{x,x}((|f|)^2)$ is finite, we can let $N$ be sufficiently large such that:
$$\widetilde{I}^N_{x,x}((|f|)^2) < \frac{\epsilon}{\|y\|^2}$$
\newpage
We disregard the case in which $y = 0$, as the underlying sequence will be the constant zero sequence, implying it is both Cauchy and convergent.

This bound shows that $\{I^n_{x,y}(f)\}$ is Cauchy in $(\mathbb{F}, | \cdot|_{\mathbb{F}}, \mathbb{F})$.

By completeness of this space, we have a unique limit:
$$I_{x,y}(f): = \lim\limits_{n  \to \infty}I^n_{x,y}(f) \,\,\text{ for all } x \in \mathrm{D}(f(A)), \,y \in H \text{ and }  f\in \mathrm{Reg}_{\mathbb{F}}(\sigma(A))$$

This completes the proof.
\end{proof}

\bigskip

We may now use Riesz's Representation Theorem in order to prove the existence and uniqueness of $f(A)$.
\begin{theorem}[The construction of $f(A)$]
Given a Hilbert Space $(H,\mathbb{F})$, and a regulated function $f \in \mathrm{Reg}_{\mathbb{F}}(\sigma(A)), \,\,f(A) $ is completely determined by the following property:
$$\text{For all } x \in \mathrm{D}(f(A)) \text{ and } y \in H: \inner{f(A)x}{y}_H = I_{x,y}(f)$$
\end{theorem}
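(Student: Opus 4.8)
The plan is to fix $x \in \mathrm{D}(f(A))$, recognise the assignment $y \mapsto I_{x,y}(f)$ as the complex conjugate of a bounded linear functional on $H$, and then let Riesz's Representation Theorem hand us the unique vector we shall call $f(A)x$. First I would note that, because $\mu_{x,y}(\cdot) = \inner{E(\cdot)x}{y}_H$ is antilinear in its second slot, each truncated integral $I^n_{x,y}(f)$ is antilinear in $y$, and this dependence survives the limit $n \to \infty$; hence the map $L_x \colon H \to \mathbb{F}$, $y \mapsto \overline{I_{x,y}(f)}$, is \emph{linear} in $y$.

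Next I would establish that $L_x$ is bounded, which is precisely where the defining condition of $\mathrm{D}(f(A))$ does its work. The chain of Cauchy--Schwarz estimates already carried out in the previous proposition yields, for every $n$,
$$\left(\left|I^n_{x,y}(f)\right|_{\mathbb{F}}\right)^2 \leq I^n_{x,x}\!\left((|f|)^2\right)\left(\|y\|_H\right)^2 \leq I_{x,x}\!\left((|f|)^2\right)\left(\|y\|_H\right)^2.$$
Letting $n \to \infty$ gives $\left|I_{x,y}(f)\right|_{\mathbb{F}} \leq I_{x,x}\!\left((|f|)^2\right)^{1/2}\|y\|_H$, and since $x \in \mathrm{D}(f(A))$ forces $I_{x,x}\!\left((|f|)^2\right) < \infty$, the functional $L_x$ lies in $H^*$ with $\|L_x\|_{op} \leq I_{x,x}\!\left((|f|)^2\right)^{1/2}$.

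With $L_x \in H^*$ in hand, Riesz's Representation Theorem produces a unique vector $w_x \in H$ with $L_x(y) = \inner{y}{w_x}_H$ for all $y \in H$; conjugating recovers $I_{x,y}(f) = \inner{w_x}{y}_H$, so I would set $f(A)x := w_x$. The value is unique by the uniqueness clause of Riesz (equivalently, by testing $\inner{z_1 - z_2}{y}_H = 0$ against $y = z_1 - z_2$), and this is exactly what underlies the \enquote{completely determined} assertion: any two operators satisfying the stated pairing agree at every $x \in \mathrm{D}(f(A))$, hence coincide as operators on $\mathrm{D}(f(A))$.

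It remains to confirm that $x \mapsto f(A)x$ is a genuine linear operator. Linearity is inherited from the linearity of $\mu_{x,y}$ in its \emph{first} slot: for $x_1, x_2 \in \mathrm{D}(f(A))$ and scalars $\alpha_1, \alpha_2$ one has $I_{\alpha_1 x_1 + \alpha_2 x_2,\,y}(f) = \alpha_1 I_{x_1,y}(f) + \alpha_2 I_{x_2,y}(f)$ for every $y$, whereupon uniqueness of the Riesz representative forces $f(A)(\alpha_1 x_1 + \alpha_2 x_2) = \alpha_1 f(A)x_1 + \alpha_2 f(A)x_2$; this presupposes that $\mathrm{D}(f(A))$ is a subspace, which follows from a Minkowski inequality for the measure $\mu_{x,x}$. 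I expect the main difficulty to be organisational rather than deep: one must track the antilinear-versus-linear dependence on $y$ carefully so that Riesz is applied to the conjugated functional $L_x$ and not to $I_{x,y}(f)$ directly, and verify that the boundedness estimate genuinely passes to the limit in $n$ before the representation theorem is invoked.
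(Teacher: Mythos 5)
Your proposal is correct and follows essentially the same route as the paper: both conjugate the assignment $y \mapsto I_{x,y}(f)$ to obtain a bounded \emph{linear} functional on $H$ (bounded via the Cauchy--Schwarz chain giving $\left|I_{x,y}(f)\right|_{\mathbb{F}} \leq \sqrt{I_{x,x}\left((|f|)^2\right)}\,\|y\|_H$, finite precisely because $x \in \mathrm{D}(f(A))$), invoke Riesz's Representation Theorem, and un-conjugate to recover $\inner{f(A)x}{y}_H = I_{x,y}(f)$. Your additional verifications (linearity of $x \mapsto f(A)x$, the subspace property of $\mathrm{D}(f(A))$, and the uniqueness argument behind ``completely determined'') merely make explicit what the paper asserts without detail, and do not constitute a different method.
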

\begin{proof}
Let $f \in  \mathrm{Reg}_{\mathbb{F}}(\sigma(A))$ and $ x\in\mathrm{D}(f(A))$. We may now consider the following map:
$$
  g_x\colon
  H
  \;\longrightarrow\;
  \mathbb{C},
  \quad
  y\;\longmapsto\;\overline{I_{x,y}(f)} 
$$
We aim to show that this map is linear and bounded.
Since linearity is a direct consequence of the linearity of the inner product, we will prove its boundedness.

\textbf{Boundedness:}

For $y \in H$, we have: 
\begin{align*}
    |g_x(y)|_{\mathbb{F}} 
    &= \left|\overline{I_{x,y}(f)}\right|_{\mathbb{F}}\\
    &= \left|{I_{x,y}(f)}\right|_{\mathbb{F}}\\
    &\leq \lim\limits_{m \to \infty} \sum\limits_{i=1}^m |f(t_i)|_{\mathbb{F}} |\mu_{x,y}(I_i)|_{\mathbb{F}}\\
    &\leq \lim\limits_{m \to \infty} \sum\limits_{i=1}^m |f(t_i)|_{\mathbb{F}}\,\mu_{x,x}(I_i)^{\frac{1}{2}}\, \mu_{y,y}(I_i)^{\frac{1}{2}}\\
    &\leq \lim\limits_{m \to \infty} \left(\sum\limits_{i=1}^m (|f(t_i)|_{\mathbb{F}})^2\,\mu_{x,x}(I_i)\right)^{\frac{1}{2}}\left(\sum\limits_{i=1}^m \mu_{y,y}(I_i)\right)^{\frac{1}{2}}\\
    &= \sqrt{I_{x,x}((|f|)^2)}\,\, \|y\|_{H}
\end{align*}

By picking $M_x = \sqrt{I_{x,x}((|f|)^2)}$, we have that:
$$ |g_x(y)|_{\mathbb{F}}  \leq M_x \|y\|_{H} \, \,\text{ for all } y \in H$$
This proves boundedness.

By Riesz's Representation Theorem, we get:
$$g_x = \inner{\cdot}{\,\,f(A)x}_H \,\,\text{ for some unique } f(A)x \in H$$ 
By the antilinearity of the inner product in the second argument, we get:
$$\inner{f(A)x}{y}_H = \overline{\inner{y}{f(A)x}_H} = I_{x,y}(f)$$
Since $x \in \mathrm{D}(f(A))$ was arbitrarily chosen, we have constructed $f(A)$ as a linear operator satisfying:
$$\inner{f(A)x}{y}_H = I_{x,y}(f) \,\,\text{ for all } x \in \mathrm{D}(f(A)) \text{ and } y \in H$$
This proves our claim.
\end{proof}

\bigskip

It remains to show that the assignment $f \longmapsto f(A)$ defines a functional calculus for $A$.

Firstly, we will prove a lemma which will be of later use:
\begin{lemma}[$f(A)$ commutes with the  Projection\hspace{0.05cm}-\hspace{-0.1cm}Valued Measure]
Given a Hilbert space $(H, \mathbb{F})$, a regulated function $ f \in \mathrm{Reg}_{\mathbb{F}}(\sigma(A))$, and the (unique) projection-valued measure $E$ from the unbounded Borel functional calculus, we have:
$$f(A)E(J) = E(J)f(A) \,\,\text{ for all } J \in Bor(\sigma(A))$$
\end{lemma}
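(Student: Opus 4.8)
The plan is to reduce the operator identity to an identity between the underlying complex measures $\mu_{x,y}$, which can be checked directly from the multiplicativity and self-adjointness of the projection-valued measure. Throughout I would fix $J \in Bor(\sigma(A))$, $x \in \mathrm{D}(f(A))$ and $y \in H$, and use freely that each $E(J)$ is a bounded, self-adjoint orthogonal projection and that $E$ is multiplicative, i.e. $E(B_1)E(B_2) = E(B_1 \cap B_2)$ for all $B_1, B_2 \in Bor(\sigma(A))$.

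The first step is a domain check: I must confirm that $E(J)$ maps $\mathrm{D}(f(A))$ into itself, so that $f(A)E(J)x$ is even defined. Since the projections in the range of $E$ commute, one computes
$$\mu_{E(J)x,\, E(J)x}(B) = \inner{E(B)E(J)x}{E(J)x}_H = \inner{E(B \cap J)x}{x}_H = \mu_{x,x}(B \cap J),$$
from which $\int_{\sigma(A)}(|f|)^2\, d\mu_{E(J)x,E(J)x} = \int_J (|f|)^2\, d\mu_{x,x} \leq \int_{\sigma(A)}(|f|)^2\, d\mu_{x,x} < \infty$, so that $E(J)x \in \mathrm{D}(f(A))$.

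The heart of the argument is the measure identity. By multiplicativity of $E$ together with the self-adjointness of $E(J)$,
$$\mu_{E(J)x,\, y}(B) = \inner{E(B)E(J)x}{y}_H = \inner{E(B \cap J)x}{y}_H = \inner{E(B)x}{E(J)y}_H = \mu_{x,\, E(J)y}(B)$$
for every $B \in Bor(\sigma(A))$. Hence the two complex measures $\mu_{E(J)x,y}$ and $\mu_{x,E(J)y}$ coincide, so each truncated integral $I^n_{E(J)x,y}(f)$ agrees with $I^n_{x,E(J)y}(f)$; passing to the limit in $n$ yields $I_{E(J)x,y}(f) = I_{x,E(J)y}(f)$.

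Finally I would translate this back into the operator identity via the defining property of $f(A)$ and, once more, the self-adjointness of $E(J)$:
$$\inner{f(A)E(J)x}{y}_H = I_{E(J)x,y}(f) = I_{x,E(J)y}(f) = \inner{f(A)x}{E(J)y}_H = \inner{E(J)f(A)x}{y}_H.$$
Since $y \in H$ is arbitrary, this forces $f(A)E(J)x = E(J)f(A)x$ for all $x \in \mathrm{D}(f(A))$, which is exactly the asserted commutation. The only genuine subtlety is the domain bookkeeping of the first step — making sure $E(J)x$ stays inside $\mathrm{D}(f(A))$ so that the left-hand side is well defined — whereas the interchange of the bounded operator $E(J)$ with the limit defining $f(A)$ costs nothing, since everything has been pushed down to the level of the scalar measures $\mu_{x,y}$.
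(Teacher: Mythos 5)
Your proof is correct and follows essentially the same route as the paper: both reduce the commutation to the measure identity $\mu_{E(J)x,y} = \mu_{x,E(J)y}$ (obtained from multiplicativity and self-adjointness of $E$) and then unwind it through the defining property $\inner{f(A)x}{y}_H = I_{x,y}(f)$ together with the arbitrariness of $y$. You are in fact slightly more careful than the paper, which silently assumes $E(J)x \in \mathrm{D}(f(A))$; your explicit verification that $\mu_{E(J)x,E(J)x}(B) = \mu_{x,x}(B \cap J)$, and hence that $E(J)$ preserves $\mathrm{D}(f(A))$, closes that small gap.
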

\begin{proof}

Let $x \in \mathrm{D}(f(A)), \, y\in H$ and $J \in Bore(\sigma(A))$. 

Then:
\begin{align*}
    \inner{f(A)E(J)x}{y}_H &= \int\limits_{\sigma(A)} f(\lambda) \, d\mu_{E(J)x,y}(\lambda)\\
    &= \int\limits_{\sigma(A)}f(\lambda) \, \mathds{1}_{\hspace{-0.15cm}J}(\lambda)\, d\mu_{x,y}(\lambda)\\
    &= \int\limits_{\sigma(A)}f(\lambda) \, d\mu_{x,E(J)y}\\
    &=\inner{f(A)x}{E(J)y}_H\\
    &=\inner{E(J)f(A)x}{y}_H
\end{align*}
This proves our lemma.
\end{proof}

\newpage 
\begin{theorem}[The Unbounded $HK$ Functional Calculus is a bona fide Functional Calculus]
Given the $*$-algebra of regulated functions $(\mathrm{Reg}_{\mathbb{F}}(\sigma(A)),\mathbb{F})$, the following map is a $*$-algebra homomorphism:
\[
  \Phi^{HK}_{A}\colon
  \mathrm{Reg}_{\mathbb{F}}(\sigma(A))
  \;\longrightarrow\;
  \{f(A)\mid f \in \mathrm{Reg}_{\mathbb{F}}(\sigma(A))\},
  \quad
  f\;\longmapsto\;f(A) 
\]
\end{theorem}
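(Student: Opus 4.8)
The plan is to reduce each of the four axioms to the already-established bounded $HK$ functional calculus by truncating along the spectral projections $P_n := E([-n,n])$ and carrying out the verification on the common core $\mathcal{D} := \bigcup_{n=1}^{\infty} P_n H$. By the density proposition this set is dense in $H$, and by the commuting lemma $f(A)$ leaves each $P_n H$ invariant, with $\restr{f(A)}{P_n H}$ determined solely by the truncated integral $I^n_{x,y}(f)$ over the compact piece $\sigma_n(A)$. On that compact piece the bounded calculus applies verbatim, so all four identities already hold for the restrictions; the genuine work is then to pass to the limit $n \to \infty$ and to keep careful track of domains, since in the unbounded setting the codomain consists of unbounded operators and the algebraic identities must be read with their natural domains.

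The unital and linearity axioms follow immediately from the integral representation. Testing $1(A)$ against $y \in H$ gives $I_{x,y}(1) = \lim_n \mu_{x,y}(\sigma_n(A)) = \mu_{x,y}(\sigma(A)) = \inner{x}{y}_H$, so $\mathrm{D}(1(A)) = H$ and $1(A) = I$. For linearity, linearity of the spectral integral yields $I_{x,y}(\alpha_1 f_1 + \alpha_2 f_2) = \alpha_1 I_{x,y}(f_1) + \alpha_2 I_{x,y}(f_2)$ for every $x \in \mathrm{D}(f_1(A)) \cap \mathrm{D}(f_2(A)) \supseteq \mathcal{D}$ and $y \in H$, which identifies the two sesquilinear forms and hence the operators on that common domain. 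For the involution, the conjugation identity $\mu_{x,y}(B) = \overline{\mu_{y,x}(B)}$ established in the bounded proof gives, for $x,y \in \mathrm{D}(f(A))$, the symmetry $\inner{f(A)y}{x}_H = \inner{y}{\overline{f}(A)x}_H$, that is $\overline{f}(A) \subseteq f(A)^*$; since $\mathrm{D}(\overline{f}(A)) = \mathrm{D}(f(A))$ because $|f| = |\overline{f}|$, this is already an equality when read on the core.

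For multiplicativity the key step is to upgrade the commuting lemma to the measure-theoretic identity $d\mu_{g(A)x,y} = g \, d\mu_{x,y}$, obtained by running the lemma's argument first on indicators $\mathds{1}_J$, then on step functions, and finally on all regulated $g$ by uniform approximation on each $\sigma_n(A)$. Granting this, for $x \in \mathcal{D}$ one computes $\inner{f_1(A)f_2(A)x}{y}_H = I_{f_2(A)x,y}(f_1) = I_{x,y}(f_1 f_2) = \inner{(f_1 f_2)(A)x}{y}_H$, where the middle equality uses the invariance of $P_n H$ under $f_2(A)$ so that $f_2(A)x \in \mathrm{D}(f_1(A))$. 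The hard part throughout is domain bookkeeping rather than any new estimate: in the unbounded setting the honest statements are the inclusions $f_1(A)f_2(A) \subseteq (f_1 f_2)(A)$ and $\overline{f}(A) \subseteq f(A)^*$, with equality guaranteed only after checking that $\mathcal{D}$ is a common core for both sides. I expect the reverse inclusion $f(A)^* \subseteq \overline{f}(A)$ for the adjoint to be the most delicate point, which I would secure by approximating an arbitrary $x \in \mathrm{D}(f(A)^*)$ by $P_n x \in \mathcal{D}$ and invoking closedness of $\overline{f}(A)$.
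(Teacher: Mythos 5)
Your proposal is correct in outline and shares the paper's global strategy (verify the four axioms, exploit density of $\bigcup_{n} E([-n,n])H$, lean on the commuting lemma), but the two substantive axioms are handled by genuinely different mechanisms. For multiplicativity, the paper never proves your upgraded identity $d\mu_{g(A)x,y} = g\,d\mu_{x,y}$; it works on the domain $\mathrm{D}(f_1(A)f_2(A)) = \{x \in \mathrm{D}(f_2(A)) \mid f_2(A)x \in \mathrm{D}(f_1(A))\}$, proves that set dense, truncates the functions themselves via $f_{i,n} := f_i\,\mathds{1}_{[-n,n]\cap\sigma(A)}$, invokes the bounded calculus on step-function approximants of each truncation, and passes to a double limit (step-function index first, then $n$). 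The paper uses only the diagonal case $d\mu_{f_2(A)x,f_2(A)x} = (|f_2|)^2\,d\mu_{x,x}$ of your identity, and only to obtain the domain inclusion. Your route buys a one-line substitution proof once the upgraded lemma is in place, but that lemma needs the same indicator-to-step-to-regulated bootstrap, so the total work is comparable; the paper's route stays entirely inside machinery it has already built. For the involution, you are more careful than the paper: its computation, like yours, really only proves $\overline{f}(A) \subseteq f(A)^*$, yet the paper simply asserts equality, so your flagging of the reverse inclusion as the delicate point is a genuine improvement rather than a deviation.

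Two caveats on your repairs. First, your argument for $f(A)^* \subseteq \overline{f}(A)$ invokes closedness of $\overline{f}(A)$, which is established nowhere in the paper's framework; you can avoid it by noting that $\overline{f}(A)P_n x = P_n f(A)^* x$ for $x \in \mathrm{D}(f(A)^*)$, whence $\int_{[-n,n]}(|f(\lambda)|_{\mathbb{F}})^2\,d\mu_{x,x}(\lambda) = \left(\|\overline{f}(A)P_n x\|_H\right)^2 \leq \left(\|f(A)^* x\|_H\right)^2$ uniformly in $n$; monotone convergence then places $x$ in $\mathrm{D}(\overline{f}(A))$ directly, and testing against the dense set $\mathcal{D}$ identifies $\overline{f}(A)x = f(A)^*x$. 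Second, verifying multiplicativity only for $x \in \mathcal{D}$ is weaker than what the paper proves: the paper establishes the identity on all of $\mathrm{D}(f_1(A)f_2(A))$, and extending from the core $\mathcal{D}$ to that larger domain would itself require a closability argument that, like closedness, the paper never develops — so you should run your substitution computation for arbitrary $x \in \mathrm{D}(f_1(A)f_2(A))$ (it goes through verbatim, since the measure-change identity holds there) rather than lean on the core.
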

\begin{proof}
For the sake of brevity, let $\mathcal{A}: = \{f(A)\mid f \in \mathrm{Reg}_{\mathbb{F}}(\sigma(A))\}$.

We will verify the following axioms in the prescribed order, noting that equalities make sense only on common domains:

For all $f_1,f_2,f_3 \in \mathrm{Reg}_{\mathbb{F}}(\sigma(A))$ and $\alpha_1,\alpha_2 \in \mathbb{F}$:
\begin{enumerate}[label=\arabic*)]
\item $\Phi^{HK}_{A}(\alpha_1 f_1 + \alpha_2f_2) = \alpha_1\Phi^{HK}_{A}(f_1) + \alpha_2 \Phi^{HK}_{A}(f_2)$
\item $\Phi^{HK}_{A}(f_1f_2) = \Phi^{HK}_{A}(f_1)\Phi^{HK}_{A}(f_2)$
\item $\Phi^{HK}_{A}(\overline{f_3}) = (\Phi_{A}^{HK}(f_3))^*$
\item $\Phi^{HK}_{A}(1_{\mathrm{Reg}_{\mathbb{F}}(\sigma(A))}) = 1_{\mathcal{A}}$
\end{enumerate}

Throughout all the verifications, we will fix $f_1,f_2,f_3 \in \mathrm{Reg}_{\mathbb{F}}(\sigma(A))$ and $\alpha_1,\alpha_2 \in \mathbb{F}$.

For $1)$, we will first establish the domain on which the equality holds.

Noting that 
$$\mathrm{D}(\alpha_1 f_1(A) + \alpha_2 f_2(A)) = \mathrm{D}(f_1(A)) \cap \mathrm{D}(f_2(A))$$
we have:
$$\text{For all } x \in \mathrm{D}(f_1(A)) \cap \mathrm{D}(f_2(A)): I_{x,x}((|f_1|)^2), \,\, I_{x,x}((|f_2|)^2) < \infty$$
By the following pointwise inequality:
\begin{align*}
  (|\alpha_1f_1(\lambda) +\alpha_2f_2(\lambda)|_{\mathbb{F}})^2 
  &\leq (|\alpha_1f_1(\lambda)|_{\mathbb{F}} + |\alpha_2f_2(\lambda)|_{\mathbb{F}})^2 \\
  &= (|\alpha_1f_1(\lambda)|_{\mathbb{F}})^2 + 2|\alpha_1\alpha_2|_{\mathbb{F}}|f_1(\lambda)f_2(\lambda)|_{\mathbb{F}} + (|\alpha_2f_2(\lambda)|_{\mathbb{F}})^2\\
  &\leq 2((|\alpha_1f_1(\lambda)|_{\mathbb{F}})^2 + (|\alpha_2f_2(\lambda)|_{\mathbb{F}})^2)
\end{align*}
 we get:
$$ \hspace{-0.5cm}I_{x,x}((|\alpha_1f_1 + \alpha_2 f_2|)^2) \leq I_{x,x}((|\alpha_1f_1|)^2) + I_{x,x}((|\alpha_1f_1|)^2) = |\alpha_1|_{\mathbb{F}}\, I_{x,x}((|f_1|)^2) +  |\alpha_2|_{\mathbb{F}}\, I_{x,x}((|f_2|)^2 < \infty$$
 This proves the following subset relationship:
 $$\mathrm{D}(\alpha_1 f_1(A) + \alpha_2 f_2(A)) \subseteq \mathrm{D}((\alpha_1 f_1 + \alpha_2 f_2)(A))$$
 Moreover, since:
 $$\text{For all } f \in \mathrm{Reg}_{\mathbb{F}}(\sigma(A)): \bigcup\limits_{n =1}^\infty E([-n,n])H \subseteq \mathrm{D}(f(A))$$
 it implies that $\mathrm{D}(\alpha_1 f_1(A) + \alpha_2f_2(A))$ is dense in  $H$.

Let $ x \in \mathrm{D}(\alpha_1 f_1(A) + \alpha_2 f_2(A))$ and $y \in H$. By choosing this subset as the common domain in which we will verify linearity, we have:
\begin{align*}
    \inner{(\alpha_1 f_1 + \alpha_2 f_2)(A)x}{y}_H 
    &=I_{x,y}(\alpha_1f_1 + \alpha_2f_2)\\
    &=\alpha_1 I_{x,y}(f_1) + \alpha_2I_{x,y}(f_2) \\
    &= \alpha_1\inner{f(A)x}{y}_H + \alpha_2 \inner{f(A)x}{y}_H\\
    &=\inner{\alpha_1 f_1(A)x}{y}_H + \inner{\alpha_2f_2(A)x}{y}_H\\
    &=\inner{(\alpha_1 f_1(A) + \alpha_2 f_2(A))x}{y}_H
\end{align*}

This implies that:
$(\alpha_1 f_1 + \alpha_2 f_2)(A) = \alpha_1 f_1(A) + \alpha_2 f_2(A) \,\,\text{ on }\, \,\mathrm{D}(\alpha_1 f_1(A) + \alpha_2 f_2(A))$
This verifies the first axiom.

\bigskip

As verifying $2)$ relies on $3)$, we will start with the latter.

For $3)$, we let $x,y \in \mathrm{D}\big(f_3(A)\big) = \mathrm{D}\big(\overline{f_3}(A)\big)$.

Then:

\begin{align*}
  \inner{\overline{f_3}(A)x}{y}_H 
  &= \int\limits_{\sigma(A)} \overline{f_3(\lambda)} \, d\mu_{x,y}(\lambda)\\
  &= \int\limits_{\sigma(A)} \overline{f_3(\lambda) \, d\mu_{y,x}(\lambda)}\\
  &= \overline{\int\limits_{\sigma(A)} f_3(\lambda) \, d\mu_{y,x}(\lambda)}\\
  &= \overline{\inner{f_3(A)x}{y}_H}\\
  &= \inner{x}{f_3(A)^{*}y}_H
\end{align*}

Hence: $$\overline{f_3}(A) = f_3(A)^{*}$$
This verifies the third axiom.

For $2)$, we will test the equality on $$\mathrm{D}(f_1(A)f_2(A)): =\left\{x \in \mathrm{D}(f_2(A)) \mid f_2(A)x \in \mathrm{D}(f_1(A))\right\}$$
Firstly, we will prove that this set is a dense subset of $\mathrm{D}((f_1f_2)(A))$.

By considering some $x \in \mathrm{D}(f_1(A)f_2(A))$ and $y \in H$, we have, together with the commutation of $f_2(A)$ with $E$, that:
\begin{align*}
    I_{f_2(A)x, f_2(A)x}((|f_1|)^2) 
    &= \int\limits_{\sigma(A)}(|f_1(\lambda)|_{\mathbb{F}})^2 \, d\mu _{f_2(A)x, f_2(A)x}\\
    &= \int\limits_{\sigma(A)}(|f_1(\lambda)|_{\mathbb{F}} |f_2(\lambda)|_{\mathbb{F}})^2 \, d\mu _{x, x}\\
    &=I_{x,x}((|f_1f_2|)^2) < \infty
\end{align*}
This implies that:
$$\mathrm{D}(f_1(A)f_2(A)) \subseteq \mathrm{D}((f_1f_2)(A))$$

Moreover, since:
 $$\text{For all } f \in \mathrm{Reg}_{\mathbb{F}}(\sigma(A)): \bigcup\limits_{n =1}^\infty E([-n,n])H \subseteq \mathrm{D}(f(A))$$
 we get:
 $$ \bigcup\limits_{n =1}^\infty E([-n,n])H \subseteq \mathrm{D}(f_2(A))$$
 As a consequence of the commutation of $f_2(A)$ with $E$, we derive the following fact:
 $$\text{For all } x \in \bigcup\limits_{n=1}^\infty E([-n,n])H: f_2(A)x \in \mathrm{D}(f_1(A))$$
 This shows that:
 $$\bigcup\limits_{n =1}^\infty E([-n,n])H \subseteq \mathrm{D}((f_1f_2)(A))$$
Our chosen subset is thus dense in $H$.

\medskip
We can now shift our attention to proving the second property for this chosen subset.
We consider the following sequences:
$$\text{For all }  n \in \mathbb{N}: \,\,\,\,\,f_{1,n}: = f_1\, \mathds{1}_{[-n,n] \,\cap\, \sigma(A)} \qquad f_{2,n}: = f_1\, \mathds{1}_{[-n,n] \,\cap\, \sigma(A)}$$
Since these sequences of regulated functions lie in a compact set, they are bounded. 
\newpage
Additionally, we can express them as operators through the bounded $HK$ functional calculus. For each $n \in \mathbb{N}$, we consider the sequence of step functions that uniformly approximate their subscripted regulated functions:
$\left\{s_k^{f_{1,n}}\right\}_{k \in \mathbb{N}},\left\{s_k^{f_{2,n}}\right\}_{k \in \mathbb{N}}$.

By the bounded $HK$ functional calculus, we have:
$$\displaystyle \inner{(s_k^{f_{1,n}}s_l^{f_{2,n}})(A)x}{y}_H = \inner{s_k^{f_{1,n}}(A)s_l^{f_{2,n}}(A)x}{y}_H \quad \text{ for all } k,l,n \in \mathbb{N}$$
As we let $k$ and $l$ tend to infinity, we recover the bounded $HK$ functional calculus  for the sequences of regulated functions:
$$\inner{(f_{1,n} f_{2,n})(A)x}{y}_H = \inner{f_{1,n}(A)f_{2,n}(A)x}{y}_H \quad \text{ for all } n \in \mathbb{N}$$
By letting $n$ grow arbitrarily large, we get:
$$\inner{(f_1f_2)(A)x}{y}_H = \inner{f_1(A)f_2(A)x}{y}_H$$
This verifies the second axiom.

\bigskip 

For $4)$, the domain $\mathrm{D}(1_{\mathrm{Reg}_{\mathbb{F}}(\sigma(A))}(A))$ is trivially dense in $H$:
$$
\mathrm{D}(1_{\mathrm{Reg}_{\mathbb{F}}(\sigma(A))}(A))=\Bigg\{\,x\in H \;\Bigg|\;\int_{\sigma(A)}d\mu_{x,x}(\lambda)<\infty\Bigg\}=\Bigg\{\,x\in H \;\Bigg|\;\|x\|_H^2<\infty\Bigg\}
$$
Hence:
$$\text{For all } x,y \in H: \inner{1_{\mathrm{Reg}_{\mathbb{F}}(\sigma(A))}(A) x}{y}_H = \int\limits_{\sigma(A)}d\mu_{x,y}(\lambda) = \inner{x}{y}_H$$
This proves that:
$$1_{\mathrm{Reg}_{\mathbb{F}}(\sigma(A))}(A) = 1_{\mathcal{A}}$$
With this final remark, we have verified the last axiom.

This completes our proof.
\end{proof}

\chapter{The Spectral Mapping Theorem for Regulated Functions} \label{ch: spectralmappingtheorem}
In this chapter, we will leverage the properties of regulated functions to provide an informative description of $\sigma(f(A))$. Afterwards, we will give a few examples outlining the advantages of the $HK$ functional calculus.
\bigskip

From the continuous functional calculus, we have the following equality:
$$\sigma(f(A)) = f(\sigma(A)) \quad \text{for all } f \in \mathrm{C}_{\mathbb{F}}(\sigma(A))$$

Unfortunately, this equality breaks in case we extend $\mathrm{C}_{\mathbb{F}}(\sigma(A))$ to $\mathrm{Reg}_{\mathbb{F}}(\sigma(A))$. Considering the multiplication operator $M_x$ in the Hilbert space $(\mathrm{L^2}([a,b], Bor([a,b]), \lambda), \inner{\cdot}{\cdot}_{L^2},\mathbb{R})$, we fix some $c \in [a,b]$. Since singletons are Lebesgue-negligible, we will have that $\mathds{1}_{c}(A) = 0$. As a result $\sigma(\mathds{1}_{c}(A)) = {0}$, while $\mathds{1}_{c}(\sigma(A)) = \{0,1\}$. This pathology suggests the need for a new, easily computable formula for $\sigma(f(A))$.

\bigskip

Firstly, we will need some intermediate results:
\begin{proposition}[$Ran(E(\{\lambda\})) = \mathrm{Ker}(A - \lambda I)$] Given a Hilbert space $(H, \mathbb{F})$, a self-adjoint operator $(A, \mathrm{D}(A))$, and the (unique) projection-valued $E$ measure from the Borel functional calculus, we have:
$$\text{For all } \lambda \in \mathbb{R}: \mathrm{Ran}(E(\{\lambda\}) = \mathrm{Ker}(A - \lambda I)$$
\end{proposition}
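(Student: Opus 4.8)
The plan is to route both set inclusions through a single scalar identity. For fixed real $\lambda$ I would apply the functional calculus to the real-valued regulated function $g(t) = t - \lambda$, so that $g(A) = A - \lambda I$. Since $g$ is real on $\sigma(A) \subseteq \mathbb{R}$, the axioms already verified for $\Phi^{HK}_A$ (multiplicativity and $\Phi^{HK}_A(\overline{g}) = \Phi^{HK}_A(g)^*$) give $g(A)^* = g(A)$ and $(g^2)(A) = g(A)^* g(A)$. Evaluating the associated sesquilinear form at $(x,x)$ then yields, for every $x$ in the relevant domain,
$$\|(A - \lambda I)x\|_H^2 = \inner{g(A)x}{g(A)x}_H = \inner{(g^2)(A)x}{x}_H = I_{x,x}(g^2) = \int\limits_{\sigma(A)}(t-\lambda)^2 \, d\mu_{x,x}(t).$$
Everything then reduces to analysing when this nonnegative spectral integral vanishes.

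For $\mathrm{Ran}(E(\{\lambda\})) \subseteq \mathrm{Ker}(A - \lambda I)$, I would take $x = E(\{\lambda\})x$ and use the multiplicativity of the projection-valued measure, $E(B)E(\{\lambda\}) = E(B \cap \{\lambda\})$, to show that $\mu_{x,x}$ is concentrated at the single point $\lambda$, i.e. $\mu_{x,x}(B) = \inner{E(B \cap \{\lambda\})x}{x}_H$ for every Borel $B$. In particular $\int t^2 \, d\mu_{x,x}(t) = \lambda^2\mu_{x,x}(\{\lambda\}) < \infty$, so $x \in \mathrm{D}(A)$ and the identity applies; its right-hand side equals $(\lambda - \lambda)^2\mu_{x,x}(\{\lambda\}) = 0$, whence $(A - \lambda I)x = 0$.

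For the reverse inclusion $\mathrm{Ker}(A - \lambda I) \subseteq \mathrm{Ran}(E(\{\lambda\}))$, take $x \in \mathrm{D}(A)$ with $(A - \lambda I)x = 0$. The identity forces $\int_{\sigma(A)}(t - \lambda)^2 \, d\mu_{x,x}(t) = 0$; since $\mu_{x,x}$ is a positive measure and $(t - \lambda)^2 > 0$ off $\{\lambda\}$, this gives $\mu_{x,x}(\sigma(A) \setminus \{\lambda\}) = 0$. Finally, because each $E(B)$ is an orthogonal projection, $\|E(\sigma(A)\setminus\{\lambda\})x\|_H^2 = \inner{E(\sigma(A)\setminus\{\lambda\})x}{x}_H = \mu_{x,x}(\sigma(A)\setminus\{\lambda\}) = 0$, so $E(\{\lambda\})x = x$ and $x \in \mathrm{Ran}(E(\{\lambda\}))$.

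The main obstacle I anticipate is the domain bookkeeping in the unbounded setting: confirming $x \in \mathrm{D}(A)$ on each side before invoking the identity (in the $\mathrm{Ran}$ direction this hinges on $\mu_{x,x}$ being supported on the bounded set $\{\lambda\}$), together with a clean justification that $g(A)^*g(A)$ and $(g^2)(A)$ agree on the common domain so that the central identity is legitimate. The positivity/concentration step is standard but must be stated with care, since it is precisely where self-adjointness enters, through $\sigma(A) \subseteq \mathbb{R}$ and the reality of $g$.
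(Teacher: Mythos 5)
Your proof is correct, and it splits cleanly against the paper's: on the reverse inclusion $\mathrm{Ker}(A - \lambda I) \subseteq \mathrm{Ran}(E(\{\lambda\}))$ you argue exactly as the paper does, passing from $\|(A-\lambda I)x\|_H^2 = \int_{\sigma(A)}(t-\lambda)^2\,d\mu_{x,x}(t) = 0$ to $\mu_{x,x}(\sigma(A)\setminus\{\lambda\}) = 0$ and then to $\|x - E(\{\lambda\})x\|_H = 0$ via the projection property. The forward inclusion is where you genuinely diverge. The paper notes the pointwise identity $(t-\lambda)\mathds{1}_{\{\lambda\}}(t) = 0$ on $\sigma(A)$ and lifts it through the homomorphism property of the functional calculus to obtain the operator identity $(A-\lambda I)E(\{\lambda\}) = 0$ in a single line; you instead evaluate the quadratic-form identity at $x = E(\{\lambda\})x$ after showing, via multiplicativity of $E$, that $\mu_{x,x}$ is concentrated at $\{\lambda\}$. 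The paper's route is shorter, but yours buys something the paper silently skips in the unbounded setting: the lifting argument presupposes $E(\{\lambda\})x \in \mathrm{D}(A)$ for every $x$, and your concentration step ($\int t^2\,d\mu_{x,x} = \lambda^2\mu_{x,x}(\{\lambda\}) < \infty$) is precisely the verification of that fact. The one loose end you flag — that $g(A)^*g(A)$ and $(g^2)(A)$ agree on the vectors you use — is closed by the paper's own machinery: the multiplicativity axiom of the unbounded calculus holds on $\mathrm{D}(g(A)g(A))$, and the commutation lemma $f(A)E(J) = E(J)f(A)$ gives $g(A)x \in \mathrm{Ran}(E(\{\lambda\}))$ whenever $x \in \mathrm{Ran}(E(\{\lambda\}))$, so $g(A)x$ again has point-concentrated spectral measure and lies in $\mathrm{D}(g(A))$.
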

\begin{proof}
    Let $\lambda \in \mathbb{R}$. For $\mathrm{Ran}(E(\{\lambda\}) \subseteq  \mathrm{Ker}(A - \lambda I)$, we remark the following pointwise equality:
    $$\text{For all } t \in \mathbb{R}: (t - \lambda)\mathds{1}_{\{\lambda\}}(t) = 0$$
    By lifting the expression to the $HK$ functional calculus, we get: $(A- \lambda I)E(\{\lambda\}) = 0$
    This proves the forward direction.

    For $\mathrm{Ker}(A - \lambda I) \subseteq \mathrm{Ran}(E(\{\lambda\})$, we fix $x \in \mathrm{Ker}(A - \lambda I)$. Then, from:
    $$0 = \left(\|(A - \lambda I)x\|_H\right)^2 = \int\limits_{\sigma(A)} (\left|t - \lambda|_{\mathbb{R}}\right)^2 \, d\mu_{x,x}(t)$$
    it follows that $\mu_{x,x}(\sigma(A)\setminus\{\lambda\}) = 0$.\\
    Furthermore:
    \begin{align*}
       (\|x - E(\{\lambda\})x\|_H)^2 
       &=\inner{(I - E(\{\lambda\}))x}{(I - E(\{\lambda\}))x}_H \\
       &=\inner{(I - E(\{\lambda\}))x}{x}_H \\
       &=\mu_{x,x}\left(\sigma(A) \setminus\{\lambda\}\right) \\
       & = 0
    \end{align*}
This implies that $x \in Ran(E(\{\lambda\}))$, finishing our proof.
\end{proof}
While, in and of itself, the previous proposition will not be used further on, it yields an interesting corollary regarding the point spectrum of a self-adjoint operator.
\begin{corollary}
    Given a Hilbert space $(H, \mathbb{F})$, a self-adjoint operator $(A, \mathrm{D}(A))$, and the (unique) projection-valued $E$ measure from the Borel functional calculus, we have:
    $$\sigma_p(A):= \{\lambda \in \mathbb{R} \mid E(\{\lambda\}) \neq 0\}$$
\end{corollary}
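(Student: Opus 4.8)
The plan is to read this characterization directly off the preceding proposition, which established $\mathrm{Ran}(E(\{\lambda\})) = \mathrm{Ker}(A - \lambda I)$ for every $\lambda \in \mathbb{R}$. First I would unpack the definition of the point spectrum: a scalar $\lambda$ lies in $\sigma_p(A)$ precisely when it is an eigenvalue of $A$, i.e. when $A - \lambda I$ fails to be injective, equivalently when $\mathrm{Ker}(A - \lambda I) \neq \{0\}$. Since $A$ is self-adjoint we have $\sigma_p(A) \subseteq \sigma(A) \subseteq \mathbb{R}$, so restricting attention to real $\lambda$ (as in the statement) discards no eigenvalues; the eigenvector is of course required to lie in $\mathrm{D}(A)$, but this is already built into the kernel.

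Next I would invoke the proposition to rewrite this kernel as the range of the spectral projection, turning the condition $\mathrm{Ker}(A - \lambda I) \neq \{0\}$ into $\mathrm{Ran}(E(\{\lambda\})) \neq \{0\}$. The remaining ingredient is the elementary fact that an orthogonal projection is the zero operator exactly when its range is trivial; hence $\mathrm{Ran}(E(\{\lambda\})) \neq \{0\}$ is equivalent to $E(\{\lambda\}) \neq 0$. Chaining these three equivalences gives $\lambda \in \sigma_p(A) \iff E(\{\lambda\}) \neq 0$, which is exactly the asserted description.

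Because every substantive step was already carried out in the proposition, there is no genuine obstacle here: the corollary is essentially a reformulation. The only points deserving a moment's care are confirming that the definition of $\sigma_p(A)$ one starts from (existence of a nonzero eigenvector) coincides with the kernel formulation fed into the proposition, and noting that $E(\{\lambda\})$ is genuinely an orthogonal projection, so that the projection-range equivalence applies; both are immediate from the defining properties of a projection-valued measure.
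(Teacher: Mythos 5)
Your proposal is correct and follows exactly the paper's own route: the paper's proof is precisely the chain $\lambda \in \sigma_p(A) \iff \mathrm{Ker}(A - \lambda I) \neq \{0\} \iff E(\{\lambda\}) \neq 0$, with the middle step supplied by the preceding proposition. You have merely spelled out the two facts the paper leaves implicit (eigenvalues of a self-adjoint operator are real, and an orthogonal projection vanishes exactly when its range is trivial), which is a fair elaboration rather than a different argument.
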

\begin{proof}
    $$\lambda \in \sigma_p(A) \text{ if and only if } \mathrm{Ker}(A - \lambda I) \neq 0 \text{ if and only if } E(\{\lambda\}) \neq 0$$
    Hence: 
     $$\sigma_p(A):= \{\lambda \in \mathbb{R} \mid E(\{\lambda\}) \neq 0\}$$
\end{proof}
We can now prove the principal theorem of this chapter.
\begin{theorem}[The Spectral Theorem for Regulated Functions]
     Given a Hilbert space $(H, \mathbb{F})$, a self-adjoint operator $(A, \mathrm{D}(A))$, the (unique) projection-valued $E$ measure from the Borel functional calculus, and a regulated function $f \in \mathrm{Reg}_{\mathbb{F}}(\sigma(A))$, $\sigma(f(A))$ can be described as follows:
     $$\sigma(f(A)) = \overline{f(\sigma_p(A)) \cup \left\{f(c^-), f(c^+) \mid  c\in \sigma(A) \setminus \sigma_p(A)\right\}}$$
\end{theorem}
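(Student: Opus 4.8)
The plan is to prove the two inclusions separately, using that $\sigma(f(A))$ is closed so that it suffices to compare it against the \emph{unclosed} set $R := f(\sigma_p(A)) \cup \{f(c^-), f(c^+) \mid c \in \sigma(A)\setminus\sigma_p(A)\}$. For $\overline{R} \subseteq \sigma(f(A))$ I would establish $R \subseteq \sigma(f(A))$ and then pass to closures; for the reverse I would show every $\mu \notin \overline{R}$ lies in $\rho(f(A))$ by producing a bounded two-sided inverse of $f(A) - \mu I$. The tools are the corollary $\sigma_p(A) = \{\lambda \mid E(\{\lambda\}) \neq 0\}$, the identity $\mathrm{Ran}(E(\{\lambda\})) = \mathrm{Ker}(A - \lambda I)$, the multiplicativity and $*$-property of the calculus (which give $\|(f(A) - \nu I)x\|_H^2 = \int_{\sigma(A)} |f(\lambda) - \nu|_{\mathbb{F}}^2 \, d\mu_{x,x}(\lambda)$), and the sufficient criterion extracting spectrum from approximate eigenvectors.

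For $R \subseteq \sigma(f(A))$ I would split by the type of point. If $\lambda \in \sigma_p(A)$, take a unit $x \in \mathrm{Ran}(E(\{\lambda\})) = \mathrm{Ker}(A - \lambda I)$; since $f \cdot \mathds{1}_{\{\lambda\}} = f(\lambda)\mathds{1}_{\{\lambda\}}$ pointwise, multiplicativity gives $f(A)x = f(A)E(\{\lambda\})x = f(\lambda)E(\{\lambda\})x = f(\lambda)x$, so $f(\lambda)$ is a genuine eigenvalue of $f(A)$. If $c \in \sigma(A)\setminus\sigma_p(A)$ and $f(c^+)$ is defined (i.e.\ $c$ accumulates $\sigma(A)$ from the right, which forces $E((c,c+\delta)) \neq 0$ for every $\delta > 0$), pick $\delta_n \downarrow 0$ and unit vectors $x_n \in \mathrm{Ran}(E((c, c+\delta_n)))$; then $\mu_{x_n,x_n}$ is carried by $(c,c+\delta_n)$ and $\|(f(A) - f(c^+)I)x_n\|_H^2 = \int_{(c,c+\delta_n)} |f(\lambda) - f(c^+)|_{\mathbb{F}}^2 \, d\mu_{x_n,x_n}(\lambda) \leq \sup_{\lambda \in (c,c+\delta_n)} |f(\lambda) - f(c^+)|_{\mathbb{F}}^2 \to 0$ by the definition of the right limit, so the criterion places $f(c^+) \in \sigma(f(A))$. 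The value $f(c^-)$ is symmetric.

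For $\mathbb{C}\setminus\overline{R} \subseteq \rho(f(A))$, fix $\mu \notin \overline{R}$ and $\epsilon_0 > 0$ with $B(\mu, 2\epsilon_0) \cap R = \emptyset$, and set $S := \{\lambda \in \sigma(A) \mid |f(\lambda) - \mu|_{\mathbb{F}} < \epsilon_0\}$. The crux is $E(S) = 0$. No $\lambda \in S$ can be an eigenvalue (else $f(\lambda) \in R$), so $\lambda \in \sigma(A)\setminus\sigma_p(A)$ and its defined one-sided limits satisfy $|f(\lambda^{\pm}) - \mu|_{\mathbb{F}} \geq 2\epsilon_0$; with $|f(\lambda) - \mu|_{\mathbb{F}} < \epsilon_0$ this forces $|f(\lambda) - f(\lambda^{\pm})|_{\mathbb{F}} > \epsilon_0$, so $\lambda$ is a discontinuity of $f$. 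A regulated function has at most countably many discontinuities, hence $S$ is countable; since $E(\{\lambda\}) = 0$ for each (non-eigenvalue) $\lambda \in S$, countable additivity yields $E(S) = 0$. Then $g := (f-\mu)^{-1}$ on $\sigma(A)\setminus S$ (and $0$ on $S$) is bounded by $1/\epsilon_0$ and satisfies $g\cdot(f-\mu) = 1$ off the $E$-null set $S$; passing through the Borel calculus (which agrees with $\Phi^{HK}_A$ on regulated functions and respects products and $E$-a.e.\ equality) gives a bounded $g(A)$ with $g(A)(f(A) - \mu I) = (f(A) - \mu I)g(A) = I$, so $\mu \in \rho(f(A))$.

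The main obstacle is the structural claim $E(S) = 0$ in the reverse inclusion: it is exactly here that the regulated hypothesis is indispensable, since it confines $S$ to the countable discontinuity set and lets those atoms be discarded. A secondary bookkeeping point running through both directions is the handling of one-sided limits: because isolated spectral points of a self-adjoint operator are eigenvalues, every $c \in \sigma(A)\setminus\sigma_p(A)$ accumulates $\sigma(A)$ from at least one side, and one must ensure that the limits $f(c^{\pm})$ named in $R$ are precisely those defined within $\sigma(A)$, so that the approximate-eigenvector construction and the discontinuity argument invoke only genuinely available limits.
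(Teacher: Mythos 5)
Your proof is correct, but it takes a genuinely different route from the paper's. The paper runs both inclusions through the essential-range characterization of the spectrum, $\sigma(f(A)) = \left\{\mu \in \mathbb{C} \mid E\left(f^{-1}(B(\mu,\epsilon))\right) \neq 0 \text{ for all } \epsilon > 0\right\}$: the forward inclusion follows from monotonicity of $E$ (bounding below by $E(\{\lambda\})$ at eigenvalues and by $E((c-\delta,c))$ or $E((c,c+\delta))$ at one-sided limits), and the reverse inclusion from a covering of $\sigma(A)\setminus\mathrm{Disc}_{\mathbb{F}}(f,\sigma(A))$ by countably many intervals on which $f$ oscillates by less than $\frac{1}{n}$, from which either an eigenvalue $\alpha_n$ with $|f(\alpha_n)-\lambda|_{\mathbb{F}} < \frac{1}{n}$ or an endpoint $c_n$ of a support set with $|f(c_n^{\pm})-\lambda|_{\mathbb{F}} < \frac{2}{n}$ is extracted. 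You bypass that characterization entirely: forward, you produce genuine eigenvectors via multiplicativity ($f(A)E(\{\lambda\}) = f(\lambda)E(\{\lambda\})$) and Weyl sequences $x_n \in \mathrm{Ran}(E((c,c+\delta_n)))$ fed into the preliminaries' sufficient criterion for the spectrum; reverse, you exhibit an explicit bounded inverse $g(A)$ with $g = (f-\mu)^{-1}\mathds{1}_{\sigma(A)\setminus S}$, resting on the lemma $E(S) = 0$ for $S = \{\lambda \in \sigma(A) \mid |f(\lambda)-\mu|_{\mathbb{F}} < \epsilon_0\}$, which holds because every point of $S$ is a non-eigenvalue discontinuity of $f$, and these form a countable set of $E$-null atoms. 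Both proofs make the countability of $\mathrm{Disc}_{\mathbb{F}}(f,\sigma(A))$ the decisive regulated-function input, but the trade-offs differ: your forward direction proves the stronger statement $f(\sigma_p(A)) \subseteq \sigma_p(f(A))$, and your reverse direction is self-contained where the paper invokes the essential-range description of $\sigma(f(A))$ as a black box (a nontrivial fact about the Borel calculus, essentially equivalent to the inverse construction you carry out by hand); the price you pay is that $g$ is only Borel, not regulated, so you must route through $\Phi^B_A$ (using its agreement with $\Phi^{HK}_A$ on $\mathrm{Reg}_{\mathbb{F}}(\sigma(A))$, its multiplicativity, and its invariance under $E$-a.e.\ equality), together with the domain bookkeeping ($\mathrm{Ran}(g(A)) \subseteq \mathrm{D}(f(A))$, via $fg = 1 + \mu g$ off $S$) when $\sigma(A)$ is unbounded. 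Your closing caveat, that the one-sided limits named in $R$ are only those taken from sides on which $\sigma(A)$ accumulates, is well placed: the paper handles the same point silently with its \enquote{without loss of generality} assumption $E((c-\delta,c)) \neq 0$, and indeed its case-2 extraction of $c_n$ never verifies $c_n \notin \sigma_p(A)$, a detail your $E(S)=0$ argument does not need to confront.
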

\newpage
\begin{proof}
    For brevity, let: 
    $$\Sigma(f): = f(\sigma_p(A)) \cup \left\{f(c^-), f(c^+) \mid  c\in \sigma(A) \setminus \sigma_p(A)\right\}$$
    For $\overline{\Sigma(f)} \subseteq \sigma(f(A))$, we have two cases to verify:
    \begin{enumerate}
        \item $f(\sigma_p(A)) \subseteq \sigma(f(A))$
        \item $\Sigma(f)\setminus f(\sigma_p(A)) \subseteq \sigma(f(A)) $
    \end{enumerate}
    For $1)$, we fix $\epsilon > 0$ and $\lambda \in \sigma_p(A)$. Then, as $\lambda \in f^{-1}(B(f(\lambda), \epsilon))$, by the monotonicity of the spectral measure, we have:
    $$E(f^{-1}(B(f(\lambda), \epsilon)) \geq E(\{\lambda\}) \neq 0$$
    Recalling that: 
    $$\sigma(f(A)) = \left\{\lambda \in \mathbb{C} \mid \text{for all } \epsilon > 0: E(f^{-1}(B(\lambda,\epsilon))  \neq 
 0\right\}$$
    we conclude:
    $$f(\lambda) \in \sigma(f(A))$$
    For $2)$, we fix $\delta > 0$, assuming, without loss of generality, that $E((c - \delta, c)) \neq 0$. By the regularity of $f$, we get:
    $$E(f^{-1}(B(f(c^-),\epsilon)) \geq E((c- \delta, x)) \neq 0$$
    This shows that: $$f(c^{-})\in \sigma(f(A))$$
    By interchanging left neighbourhoods $(c - \delta, c)$ with right ones $(c, c +\delta)$, we obtain: $$f(c^+) \in \sigma(f(A))$$ 
    Since the spectrum of a normal operator is closed, the forward inclusion is proven.\\
    For the reverse inclusion, we fix $n \in \mathbb{N}$ and cover $\sigma(A) \setminus \mathrm{Disc}_{\mathbb{F}}(f, \sigma(A))$ by a countable family $\mathcal{I_n}$ of open intervals in $\mathbb{R}$ such that:
    $$\sigma(A) = \bigsqcup\limits_{I_n \in \mathcal{I_n}} (I_n \cap\sigma(A)) \sqcup \mathrm{Disc}_{\mathbb{F}}(f, \sigma(A))$$ 
    $$\quad \sup_{x,y \,\in\, I_n \,\cap \,\sigma(A)}|f(x) - f(y)|_{\mathbb{F}} < \frac{1}{n} \quad \text{for all } I_n \in \mathcal{I_n}$$
    Letting $\lambda \in \sigma(f(A))$ implies $E\left(f^{-1}\left(B\left(\lambda, \frac{1}{n}\right)\right)\right) \neq 0$.

    From: $$f^{-1}\left(B\left(\lambda, \frac{1}{n}\right)\right) = \bigsqcup\limits_{I_n \in \mathcal{I_n}}\left(f^{-1}\left(B\left(\lambda, \frac{1}{n}\right)\right) \cap I_n \cap \sigma(A)\right) \sqcup \left(f^{-1}\left(B\left(\lambda, \frac{1}{n}\right)\right) \cap \mathrm{Disc}_{\mathbb{F}}(f, \sigma(A))\right)$$
    and the properties of the projection-valued measure, we have that at least one of the following holds:
    \begin{enumerate}
        \item $E(f^{-1}(B(\lambda,\frac{1}{n})) \cap \{\lambda\}) \neq 0$
        \item $E(f^{-1}(B(\lambda, \frac{1}{n}) \cap I_n) \neq 0$
    \end{enumerate}
    For the first case, we get:
    $$\text{For all } n \in \mathbb{N}, \text{ there exists } \alpha_n \in \sigma_p(A) \text{ such that: } |f(\alpha_n) - \lambda|_{\mathbb{F}} < \frac{1}{n}$$
    This implies that $\lambda \in \overline{f(\sigma_p(A)})$.

    For the second case, we pick a non-zero element $x \in Ran(E(f^{-1}(B(\lambda, \frac{1}{n})) \cap I_n)$. Then $\mu_{x,x}(f^{-1}(B(\lambda, \frac{1}{n})) \cap I_n) > 0$. Letting $S_n = supp(\mu_{x,x}) \cap f^{-1}(B(\lambda, \frac{1}{n})) \cap I_n$, we set $c_n \in \{\inf S_n, \sup S_n\}$. Without loss of generality, we assume $c_n = \inf S_n$. Then:
    $$\text{For all } \delta_1 > 0, \text{there exists } s \in S, \text{ such that: } c_n < s < c_n + \delta_1$$
    By choosing $\eta > 0$ small enough, such that:
    $$(s - \eta, s + \eta) \subseteq (c_n, c_n + \delta) \cap I_n$$
    we arrive at the following result:
    $$\mu_{x,x}\left(\left(c_n, c_n + \delta_1\right) \cap f^{-1}\left(B\left(\lambda, \frac{1}{n} \right)\right) \cap I_n\right) > \mu_{x,x}\left(\left(s - \eta , s + \eta\right) \cap f^{-1}\left(B\left(\lambda, \frac{1}{n} \right)\right) \cap I_n\right) > 0$$
    This proves that:
    $$\text{For all } \delta_1 > 0: E\left(\left(c_n, c_n + \delta_1\right) \cap f^{-1}\left(B\left(\lambda, \frac{1}{n} \right)\right)\right) \neq 0$$
    By the regularity of $f$:
    $$\text{For all } n \in \mathbb{N}, \text{ there exists } \delta_2 > 0, \text{ such that for all } t \in (c_n, c_n+\delta_2) \, \cap \, \sigma(A): |f(c_n^+) - f(t)|_{\mathbb{F}} < \frac{1}{n}$$
    Taking $\delta_3: = \min\{\delta_1, \delta_2\}$, gives us some $t \in (c_n, c_n +\delta_3)  \cap f^{-1}\left(B\left(\lambda, \frac{1}{n} \right)\right)$ such that: 
    $$|f(c_n^+) - \lambda|_{\mathbb{F}} \leq |f(c_n^+) - f(t)|_{\mathbb{F}} + |f(t) - \lambda |_{\mathbb{F}} < \frac{2}{n}$$
    This proves that $\lambda \in \overline{\Sigma(f) \setminus f(\sigma_p(A))}$. The case when $c_n = \sup S_n$ is treated similarly.
    This finishes our proof.
\end{proof}

Having established sufficiently many aspects of our functional calculus, we will proceed with some examples.

\begin{example}[Thomae's Function] \end{example}
Given a self-adjoint operator $A \in \mathrm{B}(H)$ with the spectrum $\sigma(A) \subsetneq (0,1)$, we define Thomae's function on $\sigma(A)$ as:
$$ t(\lambda) :=
\begin{cases}
    \frac{1}{q},  & \text{for }\,\, \lambda = \frac{p}{q}\in \mathbb{Q} \cap \sigma(A) \text{ and }  (p,q) = 1\\
    0, & \text{else }
\end{cases}$$

\begin{figure}[H]
  \centering
  \includegraphics[width=6.5\textwidth,height=6.5cm,keepaspectratio]{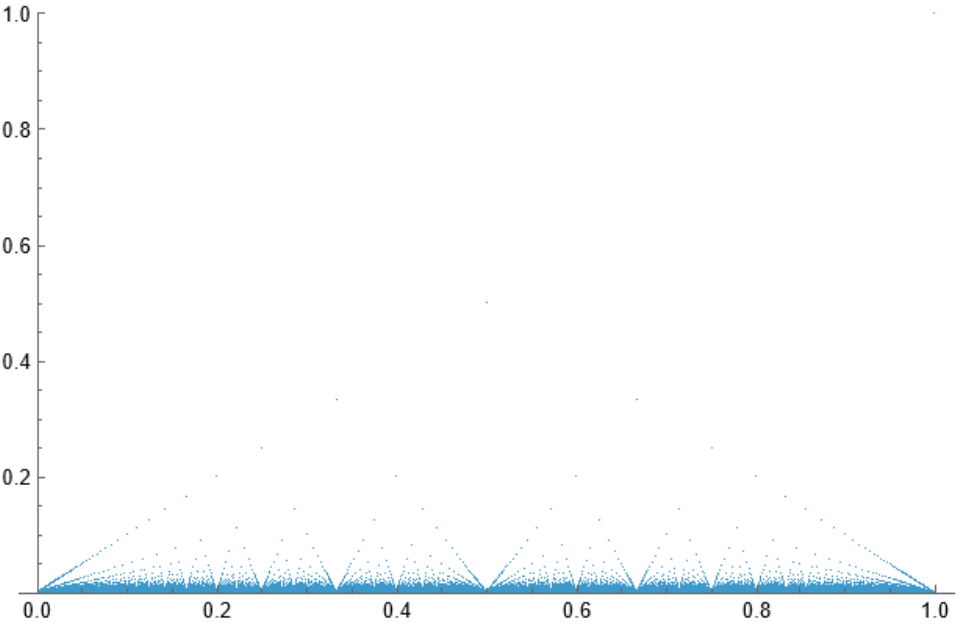}
  \caption{Tomae's Function on $(0,1)$}
\end{figure}
It is a classic result one encounters in a real-analysis course that the discontinuities of Thomae's function are the rational points. 

This, however, does not stop us from evaluating it under the $HK$ functional calculus.
Because it has countably infinite discontinuities, we cannot interpret it as a step function. However, we might want to approach it uniformly by a sequence of step functions.

We fix some $n \in \mathbb{N}$ and let:
$$ I _n : = \left\{\frac{p}{q} \in \mathbb{Q} \cap \sigma(A) \,\middle|\,1 \leq q \leq N, (p,q) =1\right\}$$
Consider:
$$t_n(\lambda) : = 
\begin{cases}
\frac{1}{q} ,  & \text{for }\,\, \lambda \in I_n\\
0,  & \text{else}
\end{cases}$$
As each $I_n$ is finite, we will discretize it in an increasing order as such:
$$I_n = \left\{\frac{p_i}{q_i}\right\}_{i=\overline{1,M_n}}$$
Consequently, each $t_n$ can be seen as a step function, whose entire sequence converges uniformly to $t$. 
$$ t_n = \sum\limits_{i=1}^{M_n} \frac{1}{q_i} \, \mathlarger{\mathbb{{1}}}_{\left\{\frac{p_i}{q_i}\right\}} \quad\text{ and }\quad t_n\xrightarrow[\;n\,\to\,\infty\;]{\|\cdot\|_{\infty}} t$$
Moreover, as:
$$t_n(A) = \sum\limits_{i=1}^{M_n}\frac{1}{q_i}E\left(\left\{\frac{p_i}{q_i}\right\}\right)$$
we get, by Lipschitz continuity of the $HK$ functional calculus, that:
$$t(A) = \lim\limits_{n \to \infty}\sum\limits_{i=1}^{M_n}\frac{1}{q_i}E\left(\left\{\frac{p_i}{q_i}\right\}\right) = \sum \limits_{i=1}^{\infty}\sum\limits_{\frac{p}{q}\in I_i}\frac{1}{q}E\left(\left\{\frac{p}{q}\right\}\right)  \quad \text{ in } \,\,\,\|\cdot\|_{op}$$
We may now look at the spectrum of this operator with the help of the spectral mapping theorem. Since Thomae's function has zero-valued one-sided limits, our spectrum becomes:
$$\sigma(t(A))  = \{0\} \cup \left\{\frac{1}{q} \, \middle| \, \exists \, p \in \mathbb{N}: \frac{p}{q} \in \mathbb{Q} \cap \sigma_p(A)\,,(p,q) =1\right\}$$

\begin{example}[The Position Operator]\end{example}
The following operator is of great importance in Quantum Mechanics as it gives us information on the areas in which a wavefunction is concentrated. In order to do that, we pointwisely localise a particle's configuration as such:
\begin{definition}[Position Operator]
    Given the Hilbert space $(\mathrm{L^2}(\mathbb{R}, Bor(\mathbb{R}), \lambda), \inner{\cdot}{\cdot}_{L^2},\mathbb{R})$, the position operator is defined as the following map:
    $$Q : \mathrm{D}(Q)\subseteq (\mathrm{L^2}(\mathbb{R}, Bor(\mathbb{R}), \lambda) \longrightarrow (\mathrm{L^2}(\mathbb{R}, 
    Bor(\mathbb{R}), \lambda), \quad \psi(x) \longmapsto x\psi(x) \quad \text{for all } x \in \mathbb{R}$$
    where:
    $$\mathrm{D}(Q): = \left\{\psi \in (\mathrm{L^2}(\mathbb{R}, Bor(\mathbb{R}), \lambda) \middle|\, \int\limits_{\mathbb{R}}(|x \psi(x)|_{\mathbb{R}})^2d\lambda(x) < \infty  \right\}$$
\end{definition}
In case we would like to weigh the spatial coordinates in a particular manner, we can use the $HK$ functional calculus.
The use of the calculus is justified as one easily verifies that $Q$ is an unbounded, densely defined, self-adjoint operator with $\sigma(Q) = \mathbb{R}$. \\ 
 By the unbounded $HK$ functional calculus, we have, in particular:
 $$\text{For all } f \in \mathrm{Reg}_{\mathbb{R}}(\mathbb{R}): f(Q) = \int\limits_{\mathbb{R}}f(x)\, dE(x)$$
As the result is rather dull and anticipated, we wish for another fruitful representation of the functional calculus. 
As the projection-valued measures acting on $(\mathrm{L^2}(\mathbb{R}, Bor(\mathbb{R}), \lambda)$ are well studied, we know that:
$$\text{For all } B \in Bor(\mathbb{R}) \text{ and } f \in \mathrm{L^2}(\mathbb{R}, Bor(\mathbb{R}), \lambda): E(B)f = \mathds{1}_{B} \,f$$
In particular, the action of the associated functional calculus can be derived as follows:

    $$\text{For all } \psi_1 \in \mathrm{D}(f(Q)) \text{ and } \psi_2 \in \mathrm{L^2}(\mathbb{R}, Bor(\mathbb{R}), \lambda):$$
\begin{align*}
    \inner{f(Q)\psi_1}{\psi_2}_{L^2} = \int\limits_{\sigma(Q)} f(x)d\mu_{\psi_1,\psi_2}(x) = \int\limits_{\mathbb{R}}f(x)\psi_1(x)\psi_2(x)d\lambda(x) = \inner{f\psi_1}{\psi_2}_{L^2}
\end{align*}
This shows that $f(Q)$ is nothing else than the multiplication operator $M_f$ on $\mathrm{L^2}(\mathbb{R}, Bor(\mathbb{R}), \lambda)$
By the Spectral Mapping Theorem for Regulated Functions:
$$\sigma(M_f) = \overline{f(\sigma_p(Q)) \cup \{f(c^+),f(c^-)\mid c \in \sigma(Q)\setminus\sigma_p(Q)\}}$$
However:
$$c \in \sigma_p(Q) \text{ if and only if } E(\{c\})\neq 0 \text{ if and only if } \lambda(\{c\}) >0$$
As singletons are Lebesgue-negligible, we get: $\sigma_p(Q) = \emptyset$. This proves that the spectrum of the multiplication operator for regulated functions reduces to the closure of one-sided limit values:
$$\sigma(M_f)=\overline{\{f(c^+), f(c^-)\mid c \in \mathbb{R}\}}$$
Oftentimes, quantum physicists are interested in the average value of the position operator $Q$, when acting on a quantum state $\psi$.
\begin{definition}
   Given the Hilbert space $(\mathrm{L^2}(\mathbb{R}, Bor(\mathbb{R}), \lambda), \inner{\cdot}{\cdot}_{L^2},\mathbb{R})$, and the position operator $(Q,D(Q))$, we define its expected value with respect to a quantum state \\ $\psi \in \mathrm{L^2}(\mathbb{R}, Bor(\mathbb{R}), \lambda)$ as follows:
   $$  \langle Q \rangle_{\psi}: = \int\limits_{\mathbb{R}}x (|\psi(x)|_{\mathbb{R}})^2\, d\lambda(x)$$
\end{definition}

With the help of our functional calculus, we can formulate the expected value spectrally:
$$\text{For all } \psi \in \mathrm{Q}:\inner{1(Q)\psi}{\psi}_{L^2} = \int\limits_{\sigma(Q)}x\,d\mu_{\psi,\psi}(x) = \int\limits_{\mathbb{R}}x  (|\psi(x)|_{\mathbb{R}})^2\, d\lambda(x) = \langle Q \rangle_{\psi}$$
It is worth mentioning that we cannot generally extend this equality to hold on all of $\mathrm{L^2}(\mathbb{R}, Bor(\mathbb{R}), \lambda)$. As justification why, we can consider the function $$\displaystyle  \psi(x) = \frac{1}{(1 + x^2)^{\frac{3}{4}}} \in \mathrm{L^2}(\mathbb{R}, Bor(\mathbb{R}),\lambda)\setminus \mathrm{D(Q)} $$

\chapter{An Application to Abstract Differential Equations} \label{ch: applications}
In this chapter, we will present Amnon Pazy's work on the Abstract Inhomogeneous Cauchy Problem, and see, with the help of an example, how we represent some solutions of this Initial Value Problem using the $HK$ functional calculus. With our chosen example, we will refine one of Pazy's results, enabling a simpler approximation of a certain type of solutions.

\medskip
Throughout this chapter, we will assume intermediary knowledge of $C_0$-semigroup theory. As a general rule of thumb, the reader should be familiar with the topics discussed in the first three chapters of Ioan Vrabie's book \enquote{$C_0$-Semigroups and Applications} \cite{Vrabie}.

\bigskip
We start our discussion by considering two functions $u,  f: [0,T] \longrightarrow \mathbb{R}$ together with a real-valued scalar $\lambda$ satisfying the following equation:
$$
\begin{cases}
\dot{u}(t) = \lambda u(t) + f(t)\\
u(0) = u_0
\end{cases}
$$
The solution to this ordinary differential equation is easily derived to be:
$$u(t) = e^{t\lambda} u_0 + \int\limits_0^t e^{(t -s)\lambda}f(s)\,ds \quad\text{ for all } t \in [0,T]$$
We may now generalise this Inhomogeneous Initial Value Problem by replacing the scalar $\lambda$ with a self-adjoint operator $(A,\mathrm{D}(A))$ generating a $C_0$-semigroup $\{T(t) \mid t \geq 0\}$ on some Banach space $(V, \|\cdot\|_V,\mathbb{F}).$
$$
\begin{cases}
\dot{u}(t) = A u(t) + f(t)\\
u(0) = u_0
\end{cases}
$$
It is clear from the formulation that $u, f: [0,T] \longrightarrow V$. From now on, we will refer to this problem as the Abstract Inhomogeneous Cauchy Problem, or $A.I.C.P.$ with datum $(u_0,f)$. To a large extent, the different types of solutions of this O.D.E. have been extensively studied by Amnon Pazy in his book \enquote{Semigroups of Linear Operators and Applications to Partial Differential Equations} \cite{Pazy}. Before turning this problem into a bona fide application of the $HK$ functional calculus,  we need to discuss Pazy's work in order to understand what types of solutions are admissible and under which conditions can they be considered. We will focus our study on two types of solutions: classical and mild. 
\begin{definition}[Classical Solution] Given a Banach space $(V, \|\cdot\|_V, \mathbb{F}),$ an infinitesimal generator $(A,\mathrm{D}(A))$ of a $C_0$-semigroup $\{T(t) \mid t\geq 0 \}$, a function $u: [0,T] \longrightarrow V$ is a \textbf{classical solution} of $A.I.C.P.$  with datum $(u_0,f)$, if the following three criteria are satisfied:
\begin{enumerate}
    \item $u \in \mathrm{C^1}((0,T); V) \cap \mathrm{C}([0,T];V)$
    \item For all $t \in (0,T): u(t) \in D(A)$
    \item $A.I.C.P.$  with datum $(u_0,f)$  is satisfied on $[0,T]$
\end{enumerate}
\end{definition}
The idea of a mild solution is motivated by the following derivation. For the $C_0$-semigroup $\{T(t)\mid t \geq 0\}$ we can construct a differentiable function on $(0,t)$.
$$g : [0,t] \longrightarrow V, \qquad s \longmapsto g(s): = T(t -s)u(s)$$
In particular:
\begin{align*}
    g'(s) 
    &= -T'(t -s)u(s) + T(t -s)u'(s)\\
    &= -AT(t-s)u(s) + T(t-s)[Au(s) +f(s)]\\
    &= -AT(t-s)u(s) + AT(t-s)u(s) + T(t-s)f(s)\\
    &= T(t-s) f(s)
\end{align*}
In the case $f \in \mathrm{L^1}(([0,T], Bor([0,t]), \lambda); V)$, we may wish to integrate our expression:
$$\int\limits_{0}^t g'(s)\,d\lambda(s) =  \int\limits_{0}^tT(t-s)f(s) \,d\lambda(s)$$
$$g(t) =g(0) + \int\limits_0^t T(t-s)f(s) \,d\lambda(s)$$
$$u(t) = T(t)u_0 + \int\limits_0^tT(t-s)f(s) \, d\lambda(s)$$
One important fact to keep in mind is that by the rather weak requirement of \\ $f \in \mathrm{L^1(}([0,T], Bor([0,t]), \lambda); V)$, if $f$ were to be a step function, $u$ will fail to be a classical solution as 
$$ \dot{u}(t) = Au(t) + f(t)$$
is discontinuous. It is therefore natural to formulate a broader type of solution.
\begin{definition}[Mild Solution]
  Given a Banach space $(V, \|\cdot\|_V, \mathbb{F}),$ an infinitesimal generator $(A,\mathrm{D}(A))$ of a $C_0$-semigroup $\{T(t) \mid t\geq 0 \}$, a function $u: [0,T] \longrightarrow V$ is a \textbf{weak solution} of $A.I.C.P.$  with datum $(u_0,f) \subseteq X \times \mathrm{L^1}(([0,T], Bor([0,t]), \lambda); V)$ if:
  \begin{enumerate}
      \item $u \in \mathrm{C}([0,T];V)$
      \item For all $t \in [0,T]: u(t) = T(t)u_0 + \displaystyle\int\limits_0^tT(t-s)f(s) \,d\lambda(s)$
  \end{enumerate}
\end{definition}
It is clear that for $f \in \mathrm{Reg([0,T];X)}$, we would only hope to get mild solutions. Furthermore, Pazy establishes an almost sufficient and necessary criterion for which mild solutions can become classical. 

It turns out that requiring $f \in \mathrm{C}([0,T];V)$ is still insufficient for this task.

For example, by fixing an infinitesimal generator $(A,\mathrm{D}(A))$ of a $C_0$-semigroup \\ $\{T(t) \mid t \geq 0\}$, $x \in V$ and  $T(t)x \notin D(A)$ for all $t \geq 0$, we can construct a continuous, integrable function:
$$ f : [0,T] \longrightarrow V, \qquad s \longmapsto f(s) : = T(s)x$$
With datum $(u_0,f)$ we form the $A.I.C.P.$:
$$
\begin{cases}
    \dot{u}(t) = Au(t) + T(t)x\\
    u(0) = 0
\end{cases}
$$
Its mild solution is: 
$$u(t)  = \int\limits_0^tT(t-s)T(s)x \, d\lambda(s) = tT(t)x \quad \text{for all }  t \in [0,T]$$
However, differentiability is only garantueed at $t = 0$,
\newpage
\begin{proposition}
$T(t)x$ is only differentiable at $t = 0$.
\end{proposition}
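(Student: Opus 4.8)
The plan is to identify the object of the statement with the mild solution $u(t) = tT(t)x$ constructed just above, and to reduce its differentiability to that of the bare semigroup orbit $t \mapsto T(t)x$, which I can then control through membership in $\mathrm{D}(A)$. First I would record the right difference quotient at an arbitrary base point $t_0 \geq 0$. Writing $u(t_0+h) - u(t_0) = t_0\bigl(T(t_0+h)x - T(t_0)x\bigr) + h\,T(t_0+h)x$ and dividing by $h>0$ gives
$$\frac{u(t_0+h) - u(t_0)}{h} = t_0\,\frac{T(t_0+h)x - T(t_0)x}{h} + T(t_0+h)x.$$
By strong continuity of the semigroup, $T(t_0+h)x \to T(t_0)x$ as $h \to 0^+$, so the second summand is always convergent and the existence of the right derivative of $u$ at $t_0$ is decided entirely by the first summand.

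Next I would dispose of the endpoint $t_0 = 0$ directly. There the prefactor $t_0 = 0$ kills the delicate first term, leaving $\dfrac{u(h)-u(0)}{h} = T(h)x \to x$ as $h \to 0^+$. Hence $u$ is right-differentiable at $0$ with $u'(0) = x$, and this holds regardless of whether $x \in \mathrm{D}(A)$. This is precisely the point: multiplication by $t$ rescues differentiability at the origin even though $x \notin \mathrm{D}(A)$, and since the domain of $u$ is $[0,T]$, ``differentiable at $0$'' means only the right derivative, so no two-sided issue arises.

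For $t_0 > 0$ I would invoke the semigroup law $T(t_0+h) = T(h)T(t_0)$ to rewrite the orbit quotient as $\dfrac{T(t_0+h)x - T(t_0)x}{h} = \dfrac{(T(h)-I)}{h}\,T(t_0)x$, which is exactly the quotient defining the infinitesimal generator, evaluated at $T(t_0)x$. By the definition of $(A,\mathrm{D}(A))$, this limit exists as $h \to 0^+$ if and only if $T(t_0)x \in \mathrm{D}(A)$. Since $t_0 > 0$ and, by hypothesis, $T(t_0)x \notin \mathrm{D}(A)$, the first summand in the displayed identity diverges while the second converges; therefore the right derivative of $u$ at $t_0$ fails to exist, and a fortiori $u$ is not differentiable there. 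Combining the two cases yields that $u$ is differentiable at $t_0$ if and only if $t_0 = 0$.

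The main obstacle, and the step meriting the most care, is the equivalence ``orbit differentiable at $t_0 \iff T(t_0)x \in \mathrm{D}(A)$''. It hinges on recognising the orbit's difference quotient as the generator's own defining quotient applied to $T(t_0)x$, and on using strong continuity to discard the harmless term $T(t_0+h)x$ in the limit; one must also be careful that the nonzero scalar factor $t_0$ neither creates nor destroys convergence, which is what cleanly separates the behaviour at $t_0 > 0$ from that at $t_0 = 0$.
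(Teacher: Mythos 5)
Your proposal is correct and follows essentially the same route as the paper: the same decomposition of the difference quotient of $u(t)=tT(t)x$ into $t_0\,\frac{T(t_0+h)x-T(t_0)x}{h}+T(t_0+h)x$, the same direct computation $u'(0)=x$ at the origin, and the same appeal to $T(t_0)x\notin\mathrm{D}(A)$ to rule out differentiability for $t_0>0$. Your write-up is in fact slightly more careful than the paper's, since you make explicit the identification of the orbit quotient with the generator's defining quotient via the semigroup law and the fact that a divergent summand plus a convergent one diverges.
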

\begin{proof}
    $$T'(0): = \hspace{-0.2cm}\lim\limits_{\,\,\,h \,\to\, 0^+}\frac{hT(h)x}{h}  = x \,\, \text{ in } \,\,\|\cdot \|_V$$ 
    Let $h > 0$.
    \begin{align*}
        \lim\limits_{\,\,\,h \,\to\, 0^+}\frac{(t+h)T(t+h)x -tT(t)x}{h} 
        &=  \lim\limits_{\,\,\,h \,\to\, 0^+}\frac{tT(t+h)x - tT(t)x}{h} +  \lim\limits_{\,\,\,h \,\to\, 0^+}\frac{hT(t+h)x}{h}\\
        &= t\hspace{-0.2cm}\lim\limits_{\,\,\,h \,\to\, 0^+}\frac{T(t+h)x - T(t)x}{h} + T(t)x
    \end{align*}
As $T(t)x \notin \mathrm{D}(A), A(T(t)x): = \hspace{-0.2cm}\lim\limits_{\,\,\,h \,\to\, 0^+}\frac{T(t+h)x - T(t)x}{h}$ does not exist. \\
This proves our claim.
\end{proof}
\begin{theorem}[Pazy's Criterion for promoting Mild Solutions to Classical Ones] 
\mbox{}\\
Given a Banach space $(V, \|\cdot\|_V, \mathbb{F})$, an infinitesimal generator $(A,\mathrm{D}(A))$ of a $C_0$-semigroup $\{T(t) \mid t \geq 0\}$ and a function $f \in  \mathrm{C}([0,T];V)$, we consider the following map:
$$v : [0,T] \longrightarrow V, \qquad t \longmapsto\int\limits_0^tT(t-s)f(s) \, d\lambda(s)$$
If one of the following two properties is satisfied:
\begin{enumerate}
    \item The path $t \longmapsto v(t)$ is continuous on $[0,T]$.
    \item For all $t \in (0,T): v(t) \in \mathrm{D}(A)$ and $Av(t)$ is continuous.
 \end{enumerate}
\end{theorem}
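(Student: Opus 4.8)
The plan is to prove what the (truncated) statement is evidently aiming at: under either hypothesis, and for every $u_0 \in \mathrm{D}(A)$, the mild solution $u(t) := T(t)u_0 + v(t)$ is in fact a \emph{classical} solution of the A.I.C.P. with datum $(u_0,f)$. Since $u_0 \in \mathrm{D}(A)$, the homogeneous part $t \mapsto T(t)u_0$ is already a classical solution of the homogeneous problem: it is $C^1$, remains in $\mathrm{D}(A)$, and satisfies $\frac{d}{dt}T(t)u_0 = AT(t)u_0$. The initial condition $u(0)=u_0$ is immediate from $v(0)=0$. Hence the entire difficulty concentrates on $v$: I must show that $v \in \mathrm{C}^1((0,T);V)$, that $v(t)\in\mathrm{D}(A)$ for $t\in(0,T)$, and that $v'(t)=Av(t)+f(t)$; adding the two contributions then yields $\dot u(t)=Au(t)+f(t)$.

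The key computation I would carry out first is the semigroup splitting of the forward difference quotient. Using $T(t+h-s)=T(h)T(t-s)$ and the fact that the bounded operator $T(h)$ passes through the Bochner integral, one obtains for $h>0$
\[
  \frac{v(t+h)-v(t)}{h} \;=\; \frac{T(h)-I}{h}\,v(t) \;+\; \frac{1}{h}\int_t^{t+h} T(t+h-s)f(s)\,d\lambda(s).
\]
After the substitution $\tau=t+h-s$, the second term becomes the average of $T(\tau)f(t+h-\tau)$ over $\tau\in[0,h]$; by continuity of $f$, local boundedness of $\|T(\tau)\|_{op}$ near $0$, and strong continuity of the semigroup, it converges to $T(0)f(t)=f(t)$ as $h\to0^+$. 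This identity is the whole engine: the forward quotient of $v$ converges \emph{if and only if} $\frac{T(h)-I}{h}v(t)$ converges, i.e.\ if and only if $v(t)\in\mathrm{D}(A)$, and in that case the limit is exactly $Av(t)+f(t)$.

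From here the two hypotheses split cleanly. Under hypothesis (ii), $v(t)\in\mathrm{D}(A)$ is assumed, so the identity yields a right derivative $D^{+}v(t)=Av(t)+f(t)$ which, by the assumed continuity of $Av$ and of $f$, is continuous on $(0,T)$. Under hypothesis (i) — which, for the conclusion to hold, must assert continuous \emph{differentiability} of $v$ on $(0,T)$ in the sense of Pazy, since bare continuity of $v$ is automatic and gives only a mild solution — the left side of the identity converges to $v'(t)$, forcing $\frac{T(h)-I}{h}v(t)$ to converge, which is precisely $v(t)\in\mathrm{D}(A)$ with $Av(t)=v'(t)-f(t)$. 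In both cases I obtain $v(t)\in\mathrm{D}(A)$ and $v'(t)=Av(t)+f(t)$.

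The step I expect to be the main obstacle is upgrading a one-sided derivative to a genuine two-sided $C^1$ derivative in case (ii). The splitting identity produces only the forward quotient, and the backward quotient $\frac{T(h)-I}{h}v(t-h)$ mixes the unbounded factor $\frac{T(h)-I}{h}$ with the vanishing increment $v(t-h)-v(t)$, so it cannot be controlled by a naive limit. I would instead invoke the standard lemma that a continuous $V$-valued function whose right derivative exists everywhere and is continuous is in fact $C^1$, with derivative equal to that right derivative; applied to $v$ with $D^{+}v=Av+f$, this delivers $v\in\mathrm{C}^1((0,T);V)$. Finally I would collect the pieces: $u(t)=T(t)u_0+v(t)\in\mathrm{D}(A)$ because both summands lie in $\mathrm{D}(A)$, $u\in\mathrm{C}([0,T];V)$ by continuity of each term, and $\dot u(t)=AT(t)u_0+Av(t)+f(t)=Au(t)+f(t)$ — exactly the three defining criteria of a classical solution.
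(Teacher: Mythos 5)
Be aware that the paper contains no proof of this theorem: immediately after stating it and its two corollaries, it declares the proofs ``not difficult'' and refers the reader to Theorem 2.4 of Chapter IV of Pazy's book, so the only meaningful comparison is with that standard proof. Your proposal is correct and reproduces it essentially verbatim: the splitting
$$\frac{v(t+h)-v(t)}{h} \;=\; \frac{T(h)-I}{h}\,v(t) \;+\; \frac{1}{h}\int_t^{t+h}T(t+h-s)f(s)\,d\lambda(s),$$
the convergence of the averaged integral term to $f(t)$, the resulting equivalence that the forward quotient of $v$ converges at $t$ if and only if $v(t)\in\mathrm{D}(A)$ (with limit $Av(t)+f(t)$), and, under hypothesis (2), the lemma that a continuous $V$-valued function with everywhere-existing continuous right derivative is $\mathrm{C}^1$ --- this is exactly Pazy's chain of reasoning, including the final assembly $u(t)=T(t)u_0+v(t)$ with $u_0\in\mathrm{D}(A)$. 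Two further points in your favour: you correctly diagnosed that the theorem as printed in the paper is defective. First, the ``if'' clause has no ``then'' clause; the intended conclusion (existence of a classical solution for every datum in $\mathrm{D}(A)\times\mathrm{C}([0,T];V)$) is missing from the theorem environment, with only the converse direction appearing, stranded, after it. Second, hypothesis (1) must read ``continuously differentiable on $(0,T)$'' rather than ``continuous on $[0,T]$'', since continuity of $v$ holds automatically for any $f\in\mathrm{C}([0,T];V)$ and yields nothing. Both corrections restore Pazy's original statement, and your proof of case (1) indeed uses differentiability of $v$, not just continuity, exactly where it must.
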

Conversely, if for some $x \in \mathrm{D}(A)$, $A.I.C.P.$ admits a classical solution $u$ on $[0,T]$ with datum $(x,f)$, then both properties are satisfied.

\bigskip \bigskip
This theorem yields two corollaries:
\begin{corollary}
    Given a Banach space $(V, \|\cdot\|_V, \mathbb{F})$, an infinitesimal generator $(A,\mathrm{D}(A))$ of a $C_0$-semigroup $\{T(t) \mid t \geq 0\}$ and a continuously differentiable, $V$-valued function $f$ on $[0,T]$, for every datum $(x, f) \subseteq \mathrm{D}(A) \times \mathrm{C}([0,T];V)$ there exists a classical solution $u$.
\end{corollary}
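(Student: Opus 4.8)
The plan is to build the mild solution guaranteed by the datum $(x,f)$ and then invoke the preceding theorem (Pazy's criterion) to promote it to a classical one. Writing
$$u(t) = T(t)x + v(t), \qquad v(t) := \int\limits_0^t T(t-s)f(s)\,d\lambda(s),$$
the hypotheses $x \in \mathrm{D}(A)$ and $f \in \mathrm{C}^1([0,T];V) \subseteq \mathrm{C}([0,T];V)$ place us exactly in the setting of the theorem, so it suffices to verify one of its two properties for $v$; the promotion to a classical solution is then delivered by the theorem itself.

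First I would rewrite the convolution by the substitution $\sigma = t-s$, giving $v(t) = \int_0^t T(\sigma)f(t-\sigma)\,d\lambda(\sigma)$. Since $f$ is continuously differentiable, I can differentiate under the integral sign, the upper limit contributing a boundary term, to obtain
$$v'(t) = T(t)f(0) + \int\limits_0^t T(t-s)f'(s)\,d\lambda(s).$$
The right-hand side is continuous in $t$, because $f'$ is continuous and $t \mapsto T(t)w$ is strongly continuous for each fixed $w \in V$; hence $v \in \mathrm{C}^1([0,T];V)$, which is the continuous-differentiability hypothesis of the criterion.

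To pin down $Av(t)$ explicitly I would then show directly that $v(t) \in \mathrm{D}(A)$, using the difference-quotient identity
$$v(t+h) - T(h)v(t) = \int\limits_t^{t+h} T(t+h-s)f(s)\,d\lambda(s),$$
which rearranges to
$$\frac{T(h)-I}{h}\,v(t) = \frac{v(t+h)-v(t)}{h} - \frac{1}{h}\int\limits_t^{t+h} T(t+h-s)f(s)\,d\lambda(s).$$
As $h \to 0^+$ the first term tends to $v'(t)$ and the second to $f(t)$, so $v(t) \in \mathrm{D}(A)$ with $Av(t) = v'(t) - f(t)$, which is continuous. Combining this with the fact that $x \in \mathrm{D}(A)$ forces $T(\cdot)x \in \mathrm{C}^1$ with $\tfrac{d}{dt}T(t)x = AT(t)x$, I conclude that $u(t)=T(t)x+v(t)$ lies in $\mathrm{D}(A)$, is continuously differentiable, satisfies $u(0)=x$, and obeys $\dot{u}(t) = Au(t) + f(t)$; by the theorem it is a classical solution.

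The hard part will be the interchange of $A$ with the integral, i.e.\ the claim $v(t) \in \mathrm{D}(A)$. Because $A$ is only closed (not bounded) and $f(s)$ need not lie in $\mathrm{D}(A)$, one cannot push $A$ naively inside the integral. The difference-quotient identity above is precisely what circumvents this obstacle: it expresses the generator's action on $v(t)$ entirely through semigroup translates of $v$ and of $f$, and the regularity $f \in \mathrm{C}^1$ is exactly what guarantees that both limits exist and agree with $v'(t) - f(t)$.
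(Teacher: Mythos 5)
Your proof is correct. The paper offers no proof of this corollary at all: it explicitly defers to Theorem 2.4 and its corollaries in Chapter IV of Pazy's book. Your argument — the substitution $\sigma = t-s$ and differentiation under the integral to get $v \in \mathrm{C}^1([0,T];V)$ with $v'(t) = T(t)f(0) + \int_0^t T(t-s)f'(s)\,d\lambda(s)$, followed by the difference-quotient identity $v(t+h) - T(h)v(t) = \int_t^{t+h} T(t+h-s)f(s)\,d\lambda(s)$ yielding $v(t) \in \mathrm{D}(A)$ and $Av(t) = v'(t) - f(t)$ — is precisely the standard argument from that cited source, so you have supplied the proof the paper omits rather than taken a genuinely different route. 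One remark: the paper's transcription of Pazy's criterion states property 1 as mere continuity of $t \longmapsto v(t)$, which is vacuous (the convolution term of a mild solution is always continuous); you correctly work with the intended hypothesis, continuous differentiability of $v$, and since you in any case verify directly that $u(t) = T(t)x + v(t)$ satisfies $u \in \mathrm{C}^1$, $u(t) \in \mathrm{D}(A)$, $u(0) = x$ and $\dot{u}(t) = Au(t) + f(t)$, your conclusion does not depend on the garbled statement of the criterion.
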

\newpage
\begin{corollary}
     Given a Banach space $(V, \|\cdot\|_V, \mathbb{F})$, an infinitesimal generator $(A,\mathrm{D}(A))$ of a $C_0$-semigroup $\{T(t) \mid t \geq 0\}$ and a function $f \in \mathrm{C}((0,T);V)$ satisfying:
     $$ \text{For every } s \in (0,T): f(s) \in \mathrm{D}(A) \text { and } Af(s) \in \mathrm{L^1}(([0,T], Bor([0,t]), \lambda); V)$$
     then, for every datum $(x, f) \subseteq \mathrm{D}(A) \times \mathrm{C}([0,T];V)$, there exists a classical solution $u$.
\end{corollary}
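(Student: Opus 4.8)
The plan is to reduce the claim to the second criterion of the preceding theorem. Decompose the candidate mild solution as $u(t) = T(t)x + v(t)$, where $v(t) := \int_0^t T(t-s)f(s)\,d\lambda(s)$. Since $x \in \mathrm{D}(A)$, the orbit $t \mapsto T(t)x$ is already a classical solution of the homogeneous problem: it lies in $\mathrm{C^1}([0,T];V)$, stays in $\mathrm{D}(A)$, and satisfies $\tfrac{d}{dt}T(t)x = AT(t)x = T(t)Ax$. Hence the whole burden falls on $v$, and it suffices to verify property $2)$ of the theorem for $v$, namely that $v(t) \in \mathrm{D}(A)$ for every $t \in (0,T)$ and that $t \mapsto Av(t)$ is continuous.

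First I would pass $A$ through the integrand pointwise. Because $f(s) \in \mathrm{D}(A)$ and a $C_0$-semigroup leaves $\mathrm{D}(A)$ invariant while commuting with its generator there, we get $T(t-s)f(s) \in \mathrm{D}(A)$ with $A\,T(t-s)f(s) = T(t-s)Af(s)$ for each $s \in (0,t)$. Writing $M := \sup_{\tau \in [0,T]}\|T(\tau)\|_{op} < \infty$, the hypothesis $Af \in \mathrm{L^1}$ gives the bound $\|T(t-s)Af(s)\|_V \le M\,\|Af(s)\|_V$, so $s \mapsto T(t-s)Af(s)$ is Bochner integrable on $[0,t]$.

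Next comes the crucial step, which I expect to be the main obstacle: pulling $A$ out of the Bochner integral. Since $A$ generates a $C_0$-semigroup it is closed, and a closed operator commutes with the Bochner integral whenever both the integrand and its image under the operator are integrable. Approximating $v(t)$ by Riemann sums $\sum_i T(t-s_i)f(s_i)\,\Delta_i$ --- each of which lies in $\mathrm{D}(A)$ with $A$ acting termwise --- and letting the mesh shrink, closedness of $A$ yields $v(t) \in \mathrm{D}(A)$ together with $Av(t) = \int_0^t T(t-s)Af(s)\,d\lambda(s)$. The delicate part is justifying the simultaneous convergence of the sums and of their images under $A$ to the respective integrals, which is precisely where closedness of $A$ is invoked.

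Finally, continuity of $t \mapsto Av(t)$ follows from the standard estimate for mild-solution integrals built on an $\mathrm{L^1}$ datum, applied to $g := Af$. Splitting
$$Av(t+h) - Av(t) = \int_t^{t+h} T(t+h-s)Af(s)\,d\lambda(s) + \int_0^t \bigl(T(h) - I\bigr)T(t-s)Af(s)\,d\lambda(s),$$
the first term vanishes as $h \to 0$ by absolute continuity of the integral, while the second tends to $0$ by strong continuity of the semigroup together with dominated convergence, the integrand being dominated by $(M+1)M\,\|Af(s)\|_V \in \mathrm{L^1}$. This establishes property $2)$, so the preceding theorem promotes the mild solution to a classical one; as $x \in \mathrm{D}(A)$ was arbitrary, the conclusion holds for every admissible datum.
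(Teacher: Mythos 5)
Your proof is correct and takes essentially the approach the paper intends: the paper itself omits the argument, deferring to Pazy's Theorem~2.4 and its corollaries, and the reference proof is precisely your reduction --- verify condition $2)$ of the preceding theorem by pushing $A$ onto the integrand via $AT(t-s)f(s)=T(t-s)Af(s)$, pulling the closed operator $A$ out of the Bochner integral to obtain $v(t)\in\mathrm{D}(A)$ with $Av(t)=\int_0^t T(t-s)Af(s)\,d\lambda(s)$, and then checking continuity of $t\longmapsto Av(t)$. Your added details (Hille-type closedness argument, the splitting of $Av(t+h)-Av(t)$ into a tail term controlled by absolute continuity of the $\mathrm{L^1}$ integral and a term handled by strong continuity plus dominated convergence) are sound and simply make explicit what the cited source leaves implicit.
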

Although the proofs of the theorem and its corollaries are not difficult, we will refer the reader to \textbf{Theorem 2.4} and its corollaries from Chapter IV on Pazy's aforementioned book \cite{Pazy}.

By using \textbf{Corollary 6.1}, Pazy managed to show that mild solutions can be uniformly approximated by classical solutions. 

\begin{theorem}[Mild Solutions are Uniform Limits of Classical Solutions]\label{thm:Pazyuniform}
     Given a Banach space $(V, \|\cdot\|_V, \mathbb{F})$, an infinitesimal generator $(A,\mathrm{D}(A))$ of a $C_0$-semigroup $\{T(t) \mid t \geq 0\}$, a function $f \in \mathrm{L^1}(([0,T], Bor([0,t]), \lambda); V)$, and a mild solution $u$ with datum $(x,f)$, there exists a sequence of strong solutions $\{u_n\}_{n \in \mathbb{N}}$ with data $(x_n,f_n)$ converging uniformly to $u$.
\end{theorem}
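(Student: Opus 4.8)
The plan is to exploit the explicit integral representation of the mild solution together with two density facts: that $\mathrm{D}(A)$ is dense in $V$ (a standard property of $C_0$-semigroup generators) and that continuously differentiable $V$-valued functions are dense in $\mathrm{L^1}(([0,T], Bor([0,t]), \lambda); V)$. Since $u$ is the mild solution with datum $(x,f)$, we have
$$u(t) = T(t)x + \int\limits_0^t T(t-s)f(s)\,d\lambda(s) \quad \text{for all } t \in [0,T].$$
First I would choose a sequence $x_n \in \mathrm{D}(A)$ with $x_n \to x$ in $\|\cdot\|_V$, and a sequence $f_n$ of continuously differentiable $V$-valued functions on $[0,T]$ with $f_n \to f$ in $\|\cdot\|_{\mathrm{L^1}}$. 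For each datum $(x_n, f_n) \subseteq \mathrm{D}(A) \times \mathrm{C}([0,T];V)$, Corollary 6.1 produces a classical solution $u_n$ (which is the strong solution of the statement); being classical, it is in particular the mild solution with datum $(x_n, f_n)$, so
$$u_n(t) = T(t)x_n + \int\limits_0^t T(t-s)f_n(s)\,d\lambda(s).$$

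Next I would estimate the difference. Since $\{T(t) \mid t \geq 0\}$ is a $C_0$-semigroup, it is uniformly bounded on the compact interval $[0,T]$; let $M_T := \sup_{t \in [0,T]}\|T(t)\|_{op} < \infty$. Subtracting the two representations and applying the triangle inequality for the Bochner integral gives, for every $t \in [0,T]$,
$$\|u_n(t) - u(t)\|_V \leq \|T(t)(x_n - x)\|_V + \int\limits_0^t \|T(t-s)(f_n(s) - f(s))\|_V\,d\lambda(s) \leq M_T\|x_n - x\|_V + M_T\|f_n - f\|_{\mathrm{L^1}}.$$
Taking the supremum over $t \in [0,T]$ then yields
$$\|u_n - u\|_{\infty} \leq M_T\left(\|x_n - x\|_V + \|f_n - f\|_{\mathrm{L^1}}\right) \longrightarrow 0 \quad \text{as } n \to \infty,$$
which is precisely the asserted uniform convergence.

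The main obstacle is the first step, namely selecting approximating data for which classical solutions are guaranteed to exist; the estimate above is entirely routine once the sequences are in hand, but it is only meaningful if each $(x_n, f_n)$ genuinely produces a classical solution. This is exactly where the hypotheses of Corollary 6.1 must be met, forcing $x_n \in \mathrm{D}(A)$ and $f_n$ continuously differentiable rather than merely, say, step functions (which the preceding discussion shows fail to yield classical solutions). The density of $\mathrm{D}(A)$ in $V$ is immediate from semigroup theory, whereas the density of $\mathrm{C^1}$-functions in $\mathrm{L^1}(([0,T], Bor([0,t]), \lambda); V)$ requires a mollification argument in the vector-valued setting; care must be taken that the smoothing is carried out in the $\|\cdot\|_{\mathrm{L^1}}$-norm, so that integrability of $f_n$ is preserved while the regularity needed by the corollary is gained.
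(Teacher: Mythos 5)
Your proposal is correct and follows essentially the same route as the paper: approximate $x$ by $x_n\in\mathrm{D}(A)$ and $f$ by continuously differentiable $f_n$ in $\|\cdot\|_{\mathrm{L^1}}$, invoke Corollary 6.1 to obtain classical solutions, and then run the standard Gronwall-free estimate using a uniform bound on $\|T(t)\|_{op}$ over $[0,T]$. The only cosmetic difference is that you use $M_T:=\sup_{t\in[0,T]}\|T(t)\|_{op}$ where the paper uses the exponential bound $Me^{|\omega|_{\mathbb{R}}T}$ coming from $\|T(t)\|_{op}\leq Me^{t\omega}$; these are interchangeable on a compact interval.
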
 
\begin{proof}
    As $\{T(t) \mid t \geq 0\}$ is a $C_0$-semigroup, there exists $M \geq 1, \,\omega \in \mathbb{R}$ such that:
    $$\|T(t)\|_{op}\leq Me^{t\omega} \quad \text{ for all } t \geq 0$$
    Moreover, $(A,\mathrm{D}(A))$ generating a $C_0$-semigroup implies that $D(A)$ is dense in $(V, \|\cdot\|_V, \mathbb{F})$. 
    For the mild solution $u$ of $A.I.C.P.$ with datum $(x,f)$, since $\mathrm{C^1}([0,T];V)$ is dense in $\mathrm{L^1}(([0,T], Bor([0,t]), \lambda); V)$ with respect to $\|\cdot\|_{L^1}$, we have that our datum $(x,f)$ can be approximated by some sequence of data $\{(x_n,f_n)\}_{n \in \mathbb{N}}$. 
    
    Concretely:
    $$ x_n \xrightarrow[\, n \, \to \, \infty]{\,\|\cdot\|_{\mathbb{V}}}x \qquad \text{ and } \qquad f_n \xrightarrow[\, n \, \to \, \infty]{\,\|\cdot\|_{L^1}}f $$
    By \textbf{Corollary 6.1}, we have for every $n \in \mathbb{N}:$
    $$\begin{cases}
        \dot{u}_n(t) = Au_n(t) + f_n(t)\\
        u_n(0) = x_n
    \end{cases}$$
has a classical solution: 
$$u_n(t) = T(t)x_n + \int\limits_0^tT(t-s)f(s) \, d\lambda(s) \quad \text{ for every } t \in [0,T]$$
\newpage
By fixing some $t \in [0,T]$, we have:
\begin{align*}
    \|u_n(t) - u(t)\|_V 
    &= \left\|\,T(t)x_n + \int\limits_0^tT(t-s)f_n(s)\,d\lambda(s)  - T(t)x - \int\limits_0^tT(t-s)f_n(s)\,d\lambda(s) \,\right\|_V\\
    &\leq \left\|T(t)(x_n - x)\right\|_V + \int\limits_0^t\left\|T(t-s)(f_n(s)-f(s))\right\|_V\\
    &\leq Me^{t \omega} \|x_n - x\|_V + \int\limits_0^t Me^{\omega(t-s)}\|f_n(s) - f(s)\|_V \,d\lambda(s)\\
    &\leq Me^{|\omega|_{\mathbb{R}} T}\left(\|x_n - x\|_V + \|f_n(s) - f(s)\|_{L^1}\right)
\end{align*}
As the upper bound does not depend on the fixed $t$, we conclude:
$$u_n \xrightarrow[\, n \, \to \, \infty]{\,\|\cdot\|_{\infty}}u$$
This proves our claim.
\end{proof}
\bigskip

We can now examine a particular example.
\begin{example}
Fix some measure space $\left(\overline{(a,b)},Bor\left(\overline{(a,b)}\right), \lambda\right)$, where $(a,b)\subseteq\mathbb{R}$, and $a$ or $b$ may be infinite. Consider a bounded, non-decreasing sequence  $D =\{\lambda_n\}_{n \in \mathbb{N}}$ in $\overline{(a,b)}$, where each $\lambda_k$ satisfies $E(\{\lambda_k\}) = 0$. Based upon this sequence, we construct a function 
$g \in \mathrm{L^\infty}\left(\overline{(a,b)},Bor\left(\overline{(a,b)}\right), \lambda\right)$. Let $g_c \in \mathrm{C}_{\mathbb{R}}(\overline{(a,b)}\setminus D)$. By perturbing $g_c$ with discontinuities, we obtain:
$$
g(\lambda)=
\begin{cases}
    g_c(\lambda),  & \text{for }\,\, \lambda \in \overline{(a,b)}\setminus D\\
    \lambda_k,  & \text{for }\,\, \lambda = \lambda_k \in D
\end{cases}
$$
\end{example}
As the discontinuities of $g$ are countably many and are not essential ones, $g$ is regulated. We then consider the following $C_0$-semigroup $\{T(t) \mid t \geq 0\}$ characterised by its pointwise action:
$$\text{For all } f \in \mathrm{L^2}\left(\overline{(a,b)},Bor\left(\overline{(a,b)}\right), \lambda\right) \text{ and } t \geq 0: T(t)f(x) : = e^{t g(x)}f(x)$$
Before looking at the representation of the mild solution using the $HK$ functional calculus, we need to find out the infinitesimal generator $(A,\mathrm{D}(A))$ of $\{T(t) \mid t \geq 0\}$.
\newpage
Since $D$ is countably infinite, it is Lebesgue negligible. This is why we will restrict our analysis to $g_c$.

Let $h \in (0,\delta)$ for some fixed $\delta > 0$ and $x\in \overline{(a,b)}$. Then:
$$\lim\limits_{\,\,\,h \,\to \,0^+} \frac{T(h)f(x) - f(x)}{h} = \hspace{-0.2cm}\lim\limits_{\,\,\,h \to 0^+}\left(\frac{e^{hg_c(x)} - 1}{h}\right)f(x) = g(x)f(x) \quad \text{ in } \,\,\,\,\,\,|\cdot|_{\mathbb{R}}$$
This proves that:
$$\text{For all } x \in \overline{(a,b)}: \frac{T(h)f(x) - f(x)}{h} \xrightarrow[\,n \,\to\, \infty\,]{\,|\cdot|_{\mathbb{R}}\,} g(x)f(x) \quad \text{almost everywhere}$$
Moreover:
$$\left|\frac{e^{hg_c(x)} - 1}{h}\right|_{\mathbb{R}} = \left|\,\int\limits_0^1g_c(x)e^{shg_c(x)}\,d\lambda(s)\,\right|_{\mathbb{R}}\leq \|g\|_{\infty}e^{h \|g\|_{\infty}} \leq \|g\|_{\infty}e^{\delta \|g\|_{\infty}}$$
This enables us to use the dominated convergence theorem to conclude:
$$\left\|\frac{T(h)f - f}{h} - gf\right\|_{L^2} \xrightarrow[\,\,\,\, h \, \to \, 0^+\,]{\,|\cdot|_{\mathbb{R}}\,} \,\,\,0 $$
The infinitesimal generator $(A,\mathrm{D}(A))$ is hence globally defined, taking the form of a multiplication operator.
Concretely, $\mathrm{D}(A) = \mathrm{L^2}\left(\overline{(a,b)},Bor\left(\overline{(a,b)}\right), \lambda\right) $ and $A = M_g$. 

Since $g$ is real-valued, it follows that $A$ is self-adjoint. Moreover, as $A$ is closed, by \autoref{thm:closedgraphtheorem}, it is also bounded, implying that $\sigma(A)$ is compact.

\bigskip
For the sake of brevity, let $V: = \mathrm{L^2}\left(\overline{(a,b)},Bor\left(\overline{(a,b)}\right), \lambda\right)$.\\
Consider $$M: V \longrightarrow V, \quad f(x)\longmapsto xf(x) \quad \text{for all } x \in \overline{(a,b)}$$
We can now fix $x \in  V$ and $f \in  \mathrm{L^1}([0,T], Bor([0,T], \lambda);V)$. The mild solution $u$ of $A.I.C.P$ with datum $(x,f)$ is:
$$u(t) = T(t)x + \int\limits_{0}^tT(t-s) f(s) \, d\lambda(s) \quad \text{ for every } t \in [0,T]$$
\newpage
By the bounded $HK$ functional calculus, we may express $\{T(t)\mid t \geq 0\}$ as follows:
\begin{align*}
    T(t) 
    &= e^{tM_g}\\
    &= e^{tg(M)}\\
    &=\int\limits_{\sigma(M)}e^{tg(\lambda)}\, dE(\lambda)\\
    &= \int\limits_{\overline{(a,b)}\setminus D}e^{tg(\lambda)} \, dE(\lambda) + \int\limits_{D}e^{tg(\lambda)}\, dE(\lambda)\\
    &=  \int\limits_{\overline{(a,b)}\setminus D}e^{tg_c(\lambda)} \, dE(\lambda) + \sum\limits_{n=1}^\infty e^{t\lambda_k} \, dE(\{\lambda_k\})
\end{align*}

As $E(\{\lambda_k\}) = 0$ for all $k  \in \mathbb{N}$, we get:
$$T(t) = \int\limits_{\overline{(a,b})}e^{tg(\lambda)}\, dE(\lambda) = \int\limits_{\overline{(a,b)}\setminus D } e^{tg_c(\lambda)}\, dE(\lambda)$$
Our mild solution can be thus formulated as such:
$$u(t) = \bigg(\int\limits_{\overline{(a,b)} \setminus D}e^{tg_c(\lambda)} \, dE(\lambda)\bigg)x + \int\limits_0^t\bigg(\int\limits_{\overline{(a,b)} \setminus D}e^{(t-s)g_c(\lambda)} \, dE(\lambda)f(s)\bigg)\,d\lambda(s)$$
Our final intent with this example is to refine Pazy's result (\autoref{thm:Pazyuniform}) in the case $\overline{(a,b)} = [a,b]$, where $a,b <\infty$. As $[a,b]$ is compact, we approximate $\{T(t) \mid t \geq 0\}$ in the operator norm by simpler, bounded, linear operators $\{T_n(t) \mid t \geq 0\}$. In our case, as $\{e^{tg}\ \mid t \geq 0\} \subseteq \mathrm{Reg}_{\mathbb{F}}([a,b])$, we can construct a sequence of step functions:
$$\text{For all } n \in \mathbb{N} \text{ and } t\geq 0: s_{t,n} = \sum_{k=1}^{M_n}e^{tg(\eta_{k,n})}\,\,\mathds{1}_{I_{k,n}}$$
such that:
\begin{enumerate}
    \item For all $t \geq 0: \|s_{t,n} - e^{tg}\|_{\infty} \xrightarrow[\, n \, \to \, \infty]{\,|\cdot|_{\mathbb{R}}} 0$
    \item For all $n \in \mathbb{N}$ and $t \geq 0: \|s_{t+h,n} - s_{t,n}\|_{\infty }\xrightarrow[\, h \, \to \, 0]{\,|\cdot|_{\mathbb{R}}} 0$
\end{enumerate}
where:
$$\widehat{P}_{\gamma,n}: = \left\{(\eta_{k,n}, I_{k,n}, \gamma) \mid k=\overline{1,M_n}\right\}$$
defines a sequence (in $n \in \mathbb{N}$) of gauge-fine, tagged partitions of $[a,b]$, in which $\gamma$ is the same gauge function defined in \autoref{ch:henstockkurzweilbounded} for step functions.

Our candidate for the approximant of the $C_0$-semigroup $\{T(t) \mid t \geq 0\}$ is:
$$T_n(t): = \int\limits_{[a,b]}s_{t,n}(\lambda)\, dE(\lambda) = \sum\limits_{k=1}^{M_n}e^{tg(\eta_{k,n})}E(I_{k,n}) \quad \text{ for all } t \geq 0$$
By \autoref{lem:lipschitzcontinuity}, we will get two facts:
\begin{enumerate}
    \item For each $t \geq 0: T_n(t) \xrightarrow[\, n \, \to \, \infty]{\,\|\cdot\|_{L^2}}T(t)$
    \item For each $t \geq 0: t \longmapsto T_n(t)$ is continuous in the operator norm
\end{enumerate}
Two independent facts that we get as a consequence of how we have constructed our step functions are that:
\begin{enumerate}
    \item For all $ n \in \mathbb{N}: T_n(0) = I $
    \item For all $n \in \mathbb{N}$ and $t,s \geq 0: T_n(t+s) = T_n(t)T_n(s)$
\end{enumerate}

This shows that $\{T_n(t) \mid t \geq 0\}_{n \in \mathbb{N}}$ is a sequence of $C_0$-semigroups. In particular, for the same datum $(x,f) \subseteq V \times \mathrm{L^1}(([0,T], Bor([0,T]),\lambda);V)$ as the mild solution $u$ of $A.I.C.P.$, we can construct a new sequence of mild solutions on $[0,T]$:
\begin{align*}
    w_n(t)
    &:= T_n(t)x + \int\limits_0^tT_n(t-s)f(s)\, d\lambda(s)\\
    &\hspace{0.1cm}=\bigg(\int\limits_{[a,b]\setminus D}s_{t,n}(\lambda)\, dE(\lambda)\bigg)x + \int\limits_0^t\bigg(\int\limits_{[a,b]\setminus D}s_{t-s,n}(\lambda)\, dE(\lambda)\bigg)f(s)\,d\lambda(s)
    \end{align*}
By using Pazy's result (\autoref{thm:Pazyuniform}), each $w_n$ can be uniformly approximated on $[0,T]$ by a sequence of classical solutions $\{w_{m,n}\}_{m \in \mathbb{N}}$. Moreover, $u$ can be seen as the uniform limit of $\{w_n\}_{n\in\mathbb{N}}$.
\begin{align*}
    \hspace{-0.6cm}\sup\limits_{t \in [0,T]}\|w_n(t) - u(t)\|_V 
    &\leq \sup\limits_{t \in [0,T]}\left\|\,T_n(t)x + \int\limits_0^tT_n(t-s)f(s) \, d\lambda(s)  - T(t)x - \int\limits_0^tT(t-s) f(s) \, d\lambda(s)\,\right\|_V\\
    &\leq \sup\limits_{t \in [0,T]}\left\|T_n(t) - T(t)\right\|_{op} \,\|x\|_V + \sup\limits_{t \in [0,T]}\left\|T_n(t) - T(t)\right\|_{op} \int\limits_0^t\|f(s)\|_V \, d\lambda(s)\\
    &\leq \sup\limits_{t \in [0,T]}\left\|T_n(t) - T(t)\right\|_{op}\left(\|x\|_V + \|f\|_L^1\right)
\end{align*}
As we let $n \to \infty$, we get:
$$w_n \xrightarrow[\, n \, \to \, \infty]{\,\|\cdot\|_{\infty}} u$$

\newpage
Pazy's result applied to $u$ gives us our standard uniform convergent sequence $\{u_l\}_{l \in \mathbb{N}}$.
By two direct triangle inequalities, we have:
\begin{enumerate}
    \item $\sup\limits_{t \in [0,T]}\|u(t) - w_{m,n}(t)\|_V \leq \sup\limits_{t \in [0,T]}\|u(t) - u_n(t)\|_V + \sup\limits_{t \in [0,T]}\|u_n(t) - w_{m,n}(t)\|_V \xrightarrow[\,n \,\to\, \infty\,]{\,|\cdot|_{\mathbb{R}}\,} 0$
    \item $\sup\limits_{t \in [0,T]}\|u_n(t) - w_{m,n}(t)\|_V \leq \sup\limits_{t \in [0,T]}\|u_n(t) - u(t)\|_V + \sup\limits_{t \in [0,T]}\|u(t) - w_{m,n}(t)\|_V \xrightarrow[\,l,m,n \,\to\, \infty\,]{\,|\cdot|_{\mathbb{R}}\,}\nobreak 0$
\end{enumerate}
The  first result shows that $u$ can be described as the uniform limit of $\{u_{m,n}\}_{(m,n) \subseteq \mathbb{N}\times \mathbb{N}}$.
The second result underlines the fact that the two classical uniform approximants of the mild solution $u$ are asymptotically close together. This is of great advantage, as it gives the user a wider choice of classical approximants. 
\chapter{Conclusions and Further Directions}
\section*{Conclusions}
In our chapters, we have:
\begin{itemize}[label= -]
    \item used Henstock-Kurzweil Integration techniques to construct a functional calculus differently.
    \item proved the Spectral Mapping Theorem by means of our functional calculus.
    \item studied the advantages of the regulated functions over larger spaces such as the space of bounded, Borel-measurable functions.
    \item given an example of how one represents solutions of Abstract Differential Equations through this functional calculus.
\end{itemize}
\section*{Further Directions}
\begin{itemize}[label = -]
    \item given a Banach Space $(V, \|\cdot\|_V, \mathbb{F})$, a normal operator $(A,\mathrm{D}(A))$, if possible, on what function space can we construct an $HK$ functional calculus, and how.
\end{itemize}

    The reason why we have worked with self-adjoint operators is that the most primitive notion of the Henstock-Kurzweil integration techniques stemmed from generalising the Riemann Integral on the real line. Since normal operators may have spectra lying in the complex plane, one needs to study $HK$ integration in such spaces first. 
    \newpage 
    One approach is to start off with a contour integral 
    $\displaystyle\int\limits_{\Omega}f(z)\,dz$
    for some function 
    \\ $f: \Omega \subseteq \mathbb{C} \longrightarrow \mathbb{C}$. By considering a path $\gamma \in C_{\mathbb{C}}^1([a,b])$, we can define: 
    $$F: [a,b] \longrightarrow \mathbb{C}, \quad t \longmapsto F(t):= f(\gamma(t))\gamma'(t)$$
    Breaking $F$ into its real and imaginary parts $F= \Re(F) + i\Im(F)$, we may demand the following definition of Henstock-Kurzweil Integrability of f.
    \begin{definition}[$\gamma-HK$ integrability]
    Given a function $f: \Omega \subseteq \mathbb{C} \longrightarrow\mathbb{C}$ and a path $\gamma \in C_{\mathbb{C}}^1([a,b])$ such that $\gamma([a,b]) \subseteq \Omega$, we say that $f$ is  \textbf{$\gamma-HK$ integrable} if $\Re((f\circ \gamma)\cdot\gamma')$ and $\Im((f \circ \gamma)\cdot\gamma')$ are $HK$ integrable in the usual sense.
    \end{definition}
    We may use the same reasoning as in the  $HK$ functional calculus, but tweak the integration methods by this new definition.
    We encourage the reader to adjust this definition, as long as it extends to a fruitful research area, and to embark on developing the theory further.
\appendix

\printbibliography[heading=bibintoc]

\end{document}